\numberwithin{equation}{section}
\numberwithin{equation}{section}
\newtheorem{definition}{ \noindent D{\footnotesize EFINITION}}[section]
\newtheorem{theorem}{ \noindent T{\footnotesize HEOREM}}
\newtheorem{prop}{ \noindent P{\footnotesize ROPOSITION}}[section]
\newtheorem{lemma}{ \noindent L{\footnotesize EMMA}}[section]
\newtheorem{coro}{ \noindent C{\footnotesize OROLLARY}}
\newtheorem{remark}{ \noindent R{\footnotesize EMARK}}[section]
\newtheorem{assumption}{ \noindent A{\footnotesize SSUMPTION}}
\newcommand{\E}{\mathbb{E}}
\newcommand{\Prob}{\mathbb{P}}
\renewcommand{\Pr}[1]{\mathbb{P}\left(#1\right)}
\newcommand{\R}{\mathbb{R}}
\renewcommand\Im{\operatorname{Im}}
\newcommand{\Var}{\mathrm{Var}}
\newcommand{\ii}{\mathrm{i}}
\DeclareMathOperator{\tr}{tr}
\begin{document}

\title{Sparse Hanson-Wright Inequalities with Applications}
\author{Yiyun He$^*$, Ke Wang$^\dagger$ and Yizhe Zhu$^\S$}

\date{}
\maketitle

\begin{abstract}
We derive new Hanson-Wright-type inequalities tailored to the quadratic forms of random vectors with sparse independent components. Specifically, we consider cases where the components of the random vector are sparse $\alpha$-subexponential random variables with $\alpha>0$. When $\alpha=\infty$, these inequalities can be seen as quadratic generalizations of the classical Bernstein and Bennett inequalities for sparse bounded random vectors. To establish this quadratic generalization, we also develop new Bernstein-type and Bennett-type inequalities for linear forms of sparse  $\alpha$-subexponential random variables that go beyond the bounded case $(\alpha=\infty)$. Our proof relies on a novel combinatorial method for estimating the moments of both random linear forms and quadratic forms.

We present two key applications of these new sparse Hanson-Wright inequalities: (1) A local law and complete eigenvector delocalization for sparse $\alpha$-subexponential Hermitian random matrices, generalizing the result of He et al. (2019) beyond sparse Bernoulli random matrices. To the best of our knowledge, this is the first local law and complete delocalization result for sparse $\alpha$-subexponential random matrices down to the near-optimal sparsity $p\geq \frac{\mathrm{polylog}(n)}{n}$ when $\alpha\in (0,2)$ as well as for unbounded sparse sub-gaussian random matrices down to the optimal sparsity $p\gtrsim \frac{\log n}{n}.$  (2) Concentration of the Euclidean norm for the linear transformation of a sparse $\alpha$-subexponential random vector, improving on the results of G{\"o}tze et al. (2021) for sparse sub-exponential random vectors.
\end{abstract}

\footnotetext[1]{Department of Mathematics, University of California San Diego, La Jolla, CA 92093, USA,yih130@ucsd.edu}
\footnotetext[2]{Department of Mathematics, Hong Kong University of Science and Technology, Clear Water Bay, Kowloon, Hong Kong, kewang@ust.hk. }
\footnotetext[4]{Department of Mathematics, University of Southern California, 3620 Vermont Avenue
Los Angeles, CA 90089, USA, yizhezhu@usc.edu. }


\section{Introduction}\label{sec:intro}

The Hanson-Wright-type inequalities concern the concentration of the quadratic forms of random vectors. More specifically, consider a random vector $X=(X_1,\cdots,X_n)^T$ with independent components and a deterministic matrix $A=(a_{ij})_{1\le i,j\le n}$. The primary focus is on understanding the concentration of the quadratic form $$S_n:=\sum_{i,j} a_{ij} X_i X_j = X^T A X$$ around its expectation $\mathbb{E}(S_n)=\sum_{i,j} a_{ij} \mathbb{E}(X_i X_j)=\E(X^T A X)$.

Hanson and Wright \cite{HW71,Wright73} consider the case where $X_i$'s are centered sub-gaussian random variables. A random variable $X$ is \emph{sub-gaussian} if $\Prob(|X-\E(X)|>t )\le 2\exp(-t^2/K^2)$ for any $t> 0$. Its \emph{sub-gaussian norm} is defined as \[\|X\|_{\Psi_2}:= \inf \{ t>0 : \E(\exp(|X|/t)^2) \le 2 \} <\infty.\]  Rudelson and Vershynin \cite{RV13} provide a modern proof of the Hanson-Wright inequality, improving the dependence on the matrix operator norm in the tail bound from \cite{HW71,Wright73}. Specifically, they show that if $\max_{1\le i \le n} \|X_i\|_{\Psi_2} \le K$, then for any $t\ge 0$, 
\begin{align}\label{hw}
\Prob\left(|S_n-\mathbb E (S_n)|\geq t \right)\leq 2 \exp\left(-c\min\left\{\frac{t^2}{K^4 \|A\|_F^2}, \frac{t}{K^2 \|A\|}\right\} \right).
\end{align}

The Hanson-Wright inequality has become a fundamental tool in the study of high-dimensional probability \cite{vershynin2018high}, random matrix theory \cite{VW15}, and signal processing \cite{krahmer2011new}. However, the inequality~\eqref{hw} is limited to sub-gaussian vectors, and it does not capture the variance dependence of $X$. If we consider  a random vector with centered Ber$(p)$ entries where $p\to 0$ as $n\to \infty$, the inequality \eqref{hw} holds with $K\asymp |\log p|^{-1/2}$ \cite{ostrovsky2014exact}.
Given a Bernoulli random vector $X$, the dependence on $p$ of a linear form $\sum_{i=1}^n a_iX_i$ is well-captured by Bennett's inequality  (or Chernoff bound), which plays a fundamental role in various applications in random graph theory and theoretical computer science \cite{alon2016probabilistic}.
These observations naturally lead to the following question:

\begin{center}
 \textit{Is there a generalization of Bennett's inequality for the quadratic forms of sparse random vectors?}   
\end{center}

In this paper, we aim to derive Hanson-Wright inequalities for random variables with a sparse structure, and our new concentration inequalities capture the additional log factors analogous to the improvement from Bernstein's inequality to Bennett's inequality for linear forms of random vectors. 

A canonical example takes the form $X_i= x_i \xi_i$, where $x_i\sim \mathrm{Ber}(p_i)$ and all $x_i,\xi_i$ being independent. The $\xi_i$'s are  \emph{$\alpha$-subexponential} random variables (also known as sub-Weibull of order $\alpha$), characterized by a parameter $\alpha > 0$ such that for any $t > 0$, $\Prob(|\xi_i| \ge t) \le 2\exp(-t^{\alpha}/K_1^{\alpha})$. These random variables can be equivalently defined using either the Orlicz norm $\|\xi_i\|_{\Psi\alpha}<\infty$ or the $L_r$ norms: $\|\xi_i\|_{L_r} \le K_2 r^{1/\alpha}$. The constants $K_1$, $K_2$, and $\|\xi_i\|_{\Psi_\alpha}$ differ only by factors depending on $\alpha$. It can be readily verified that the moments of $X_i$ satisfy $\E(|X_i|^r) \le p_i (K_2r^{\frac{1}{\alpha}})^r$.

The class of $\alpha$-subexponential random variables includes sub-gaussian ($\alpha = 2$), subexponential ($\alpha = 1$), and heavier-tailed distributions ($\alpha \in (0,1)$). We also allow $\alpha = \infty$, which corresponds to bounded random variables satisfying $|\xi_i| \leq K$ almost surely (or equivalently, $\|\xi_i\|_{L_r} \leq K$ for all $r \geq 1$). The concentration behavior of $\alpha$-subexponential random variables exhibits a transition at $\alpha=1$, with $\alpha\ge 1$ corresponding to log-concave tails and $\alpha\le 1$ to log-convex tails. Our primary interest lies in the sparse regime where $p_i \ll 1$.

More generally, we study \emph{sparse $\alpha$-subexponential random variables}, which we characterize through their moments:
\begin{definition}[Sparse $\alpha$-subexponential random variables] A random variable $X$ is said to be a sparse $\alpha$-subexponential random variable for some $\alpha\in(0,\infty]$ if
\begin{align}\label{def:rv}
\E(|X|^r) \le p \left(Kr^{\frac{1}{\alpha}}\right)^r
\end{align}
for all $r\ge 1$ and some $K>0$, where $p\in(0,1]$ is the sparsity parameter.
\end{definition}
We note that one could potentially derive concentration inequalities by first reducing to the special case where $x_i\sim \mathrm{Ber}(p_i)$ and  $\xi_i$ is a symmetric
$\alpha$-Weibull random variable using standard symmetrization and contraction principles (see \cite[Theorem 1]{kwapien1987decoupling} for instance). While this approach might lead to simpler proofs, our current approach works directly with the general definition of sparse $\alpha$-subexponential random variables. This generality makes it easier to apply our results in practice.

Random vectors with sparse $\alpha$-subexponential entries arise in various areas, including large deviation theory \cite{ganguly2024spectral}, random matrix theory \cite{auffinger2016extreme}, and random weighted networks \cite{xu2020optimal}, where their concentration properties are of critical importance.

Throughout this paper, $C, C', c$ denote absolute constants that may vary in
different equations, and $C_\alpha, c_\alpha, c_\alpha'$ are absolute constants depending only on $\alpha$.

\medskip

\noindent{\bf Notations:} Let $A=(a_{ij})$ be an $n\times n$ real matrix. The Frobenius norm of $A$ is defined as $\|A\|_F =\sqrt{\sum_{i,j} a_{ij}^2}$, and its operator norm is denoted by $\|A\|$. The maximum norm is given by $\|A\|_{\max} = \max_{i,j} |a_{ij}|$. We also denote $\|A\|_{1,\infty} = \max_i \sum_{j} |a_{ij}|$ and $\|A\|_{2,\infty} = \max_i (\sum_{j} |a_{ij}|^2)^{1/2}$. For a vector $a\in \mathbb R^n$, $\|a\|$ denotes its Euclidean norm and $\|a\|_{\infty}$ denotes the maximum absolute value of its entries.  
For a real random variable $X$, its $L_r$ ($r\ge 1$) norm is denoted by $\|X\|_{L_r} = (\E |X|^r)^{1/r}$. The $\alpha$-Orlicz norm\footnote{This is a quasi-norm for $\alpha \in (0,1).$} of a random variable $X$ is defined by \[\|X\|_{\Psi_\alpha}:= \inf \{ t>0 : \E(\exp(|X|/t)^\alpha) \le 2 \} <\infty.\] For two functions $f(n),g(n)>0$, we use the asymptotic notations $ g(n)\ll f(n)$ and $g(n)=o(f(n))$ if $f(n)/g(n)\to \infty$ as $n\to \infty$. Additionally, we use $f(n) \lesssim g(n)$ or $g(n) \gtrsim f(n)$ to denote that $f(n)\le C g(n)$ for some $C>0$. Let $[\log(x)]_+$ denote the function equal to $\log(x)$ if $x>1$ and $0$ otherwise. In the latter case, we interpret $\frac{1}{[\log(x)]_+} = \infty$.
For a set $S$, let $\mathbf{1}(S)$ denote the indicator function of $S$.

\medskip

Below we present our new results on the concentration of the quadratic form for sparse random vectors. We first state 
Bernstein-type inequalities for random quadratic forms and linear forms in Section~\ref{sec:bernstein}, which is easier to state and serves as an important immediate result towards a Bennett-type inequality for the quadratic form, which we will introduce in Section~\ref{sec:bennett}.

\subsection{Bernstein-type inequalities for sparse random vectors}\label{sec:bernstein}

To derive the concentration inequalities, we decompose the quadratic form $S_n$ into two parts: the diagonal part $S_{\mathrm{diag}} = \sum_{i=1}^n a_{ii} X_i^2$ and the off-diagonal part $S_{\mathrm{off}} = \sum_{i\neq j} a_{ij} X_i X_j$. We first establish concentration inequalities for $S_{\mathrm{off}}$, which presents the main technical challenge due to the dependencies among the terms in the sum. We then derive new concentration results for $S_{\mathrm{diag}}$, extending the existing theory for sums of independent random variables to the sparse setting.

\begin{theorem}
    \label{thm:sparse_alpha}
    Let $X = (X_1, X_2,\ldots, X_n)^T$ be a random vector with independent centered sparse $\alpha$-subexponential components, where each $X_i$ satisfies $\E(|X_i|^r) \le p_i (K r^{\frac{1}{\alpha}})^r$ for all $r\ge 1$ and some $\alpha \in (0,\infty]$. Let $A$ be a diagonal-free $n\times n$ matrix. Then for any $t>0$,
\begin{align}\label{eq:main2}
\Prob(|X^TAX|\geq t) \le e^2\exp\!\left(-c_{\alpha} \min\left\{\frac{t^2}{K^4 \sum_{i,j} a_{ij}^2 p_ip_j} , \frac{t}{K^2 \gamma_{1,\infty}},\left(\frac{t}{K^2 \|A\|_{\max}}\right)^{\min\{\frac{\alpha}{2}, \frac{1}{2}\}} \right\} \!\right),
\end{align}
where $c_\alpha$ is an absolute constant depending on $\alpha$ and
\begin{align}\label{eq:def_gamma_free}
\gamma_{1,\infty}:=\max_i\Big\{\sum_{j\neq i}|a_{ij}|p_j, \sum_{j\neq i}|a_{ji}|p_j\Big\}.
\end{align}
\end{theorem}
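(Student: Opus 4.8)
The plan is to prove the tail bound via a moment estimate for $S_{\mathrm{off}} = X^T A X = \sum_{i\ne j} a_{ij} X_i X_j$, then convert moments to tails using the standard inequality $\Prob(|S_{\mathrm{off}}| \ge e^2 \|S_{\mathrm{off}}\|_{L_r}) \le e^{-r}$ optimized over $r$. The heart of the matter is to show a bound of the schematic form
\begin{align}
\|S_{\mathrm{off}}\|_{L_r} \lesssim_\alpha K^2 \left( \sqrt{r}\, \Big(\sum_{i,j} a_{ij}^2 p_i p_j\Big)^{1/2} + r\, \gamma_{1,\infty} + r^{\max\{2/\alpha, 2\}} \|A\|_{\max} \right)
\end{align}
for all $r \ge 1$, since choosing $r$ to balance $t$ against each of the three terms reproduces the three branches of the minimum in~\eqref{eq:main2} (the Gaussian branch from the first term, the $\gamma_{1,\infty}$ branch from the second, and the $(t/(K^2\|A\|_{\max}))^{\min\{\alpha/2,1/2\}}$ branch from the third). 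So everything reduces to estimating $\E|S_{\mathrm{off}}|^r$ for integer $r$.

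**First I would** expand $\E\, S_{\mathrm{off}}^r = \sum a_{i_1 j_1}\cdots a_{i_r j_r}\, \E\big[\prod_{\ell} X_{i_\ell} X_{j_\ell}\big]$ and organize the sum combinatorially. Each multi-index $(i_1,j_1,\dots,i_r,j_r)$ determines a multigraph on the vertex set of distinct labels appearing, with $r$ edges (the pairs $\{i_\ell,j_\ell\}$); because the $X_i$ are independent and centered, the expectation vanishes unless every vertex has degree $\ge 2$, so only such multigraphs contribute. For a vertex $v$ of degree $d_v$, the factor $\E|X_v|^{d_v} \le p_v (K d_v^{1/\alpha})^{d_v}$ supplies both a power of the sparsity parameter $p_v$ and, crucially, a combinatorial factor $d_v^{d_v/\alpha}$ that is benign when $\alpha \ge 1$ but must be controlled when $\alpha < 1$ (this is where the exponent $\min\{\alpha/2,1/2\}$ and the $r^{\max\{2/\alpha,2\}}$ scaling enter). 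The plan is then to bound the weighted sum over multigraphs by splitting according to the number of vertices of degree exactly $2$ versus higher degree: degree-$2$ vertices, traversed along paths/cycles, contract matrix entries via $\sum_j |a_{ij}| p_j |a_{jk}| p_k \le \gamma_{1,\infty} \cdot (\text{stuff})$ and also produce the $\ell^2$ mass $\sum_{i,j} a_{ij}^2 p_i p_j$ when they form the ``backbone'' of a cycle, while each higher-degree vertex is absorbed at cost $\|A\|_{\max}$ times a power of $r$. Counting the number of multigraphs of a given shape (a graph with $k$ edges has at most something like $(Ck)^{k}$ such configurations up to the vertex-labelling, and the number of ways to decorate with labels is controlled by $n$-free estimates since each $p_j$ sum is dimension-free) gives the claimed $L_r$ bound after carefully tracking the three regimes.

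**The main obstacle** I expect is precisely this combinatorial bookkeeping: controlling the sum over all multigraphs with minimum degree $2$ and $r$ edges, weighted by $\prod_v p_v (Kd_v^{1/\alpha})^{d_v}$ and by the matrix entries, so that the three terms emerge with the correct powers of $r$ and no spurious dimension dependence. The subtle points are (i) making the $p_i p_j$-weighted $\ell^2$ norm $\sum_{i,j}a_{ij}^2 p_i p_j$ appear rather than the crude $\|A\|_F^2$, which requires that every edge not ``charged'' to $\gamma_{1,\infty}$ or $\|A\|_{\max}$ be paired with a partner edge on the same vertex pair so the $a_{ij}^2$ and $p_i p_j$ are genuinely produced; (ii) handling $\alpha < 1$, where $d_v^{d_v/\alpha}$ can be as large as $r^{r/\alpha}$ for a single high-degree vertex — one absorbs this by noting that a degree-$d$ vertex ``uses up'' $d$ of the $r$ edges, so the total excess is a convex combination that optimizes to the stated $r^{\max\{2/\alpha,2\}}\|A\|_{\max}$ contribution; and (iii) the routine but necessary passage from even integer $r$ to all real $r\ge 1$ and the assembly of the final $e^2 \exp(-c_\alpha \min\{\cdots\})$ form, including the additive constant $e^2$ coming from the moment-to-tail conversion. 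I would first carry out the argument cleanly for $\alpha \ge 1$ (where the combinatorics is lighter), then treat $\alpha \in (0,1)$ by the degree-splitting refinement, and finally note $\alpha = \infty$ as the limiting case where the third branch becomes linear in $t$.
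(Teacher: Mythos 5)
Your proposal follows essentially the same route as the paper: an $L_r$-moment bound of exactly the form $\|X^TAX\|_{L_r}\lesssim_\alpha K^2\max\{\sqrt{r}\,\sigma,\ r\,\gamma_{1,\infty},\ r^{\max\{2/\alpha,2\}}\|A\|_{\max}\}$ proved by expanding the $r$-th moment over multigraphs with minimum degree two, charging excess edges to $\|A\|_{\max}$, tree-like edges to $\gamma_{1,\infty}$, and cycles to the weighted Frobenius mass $\sum_{i\neq j}a_{ij}^2p_ip_j$ (in the paper via $\tr((\sqrt{P}A\sqrt{P})^m)\le\|\sqrt{P}A\sqrt{P}\|_F^2\,\gamma_{1,\infty}^{m-2}$ rather than by pairing edges on the same vertex pair), with the extra degree factors $d_v^{d_v/\alpha}$ absorbed for $\alpha<1$ and Markov's inequality with an optimally chosen even integer $r$ finishing the argument. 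The obstacles you flag are precisely the steps carried out in the paper's Lemmas on the moment bound and the graph-sum estimate, so the plan is sound and matches the paper's proof.
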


The tail bound in our theorem consists of three terms. The sub-gaussian tail is expected from the central limit theorem since $$\Var(X^T A X) = \sum_{i\neq j}(a_{ij}^2 + a_{ij} a_{ji} ) \E(X_i^2)\E(X_j^2) \le 2^{1+\frac{4}{\alpha}} K^4 \sum_{i\neq j} a_{ij}^2 p_i p_j.$$
The inequality above follows from $a_{ij}a_{ji} \le (a_{ij}^2 + a_{ji}^2)/2.$ 
The subexponential tail dominates when $A$ has a prominent row or column. For instance, consider a case where only the first row of $A$ is non-zero, with non-negative entries for simplicity, and $X_i = x_i-p$ for all $i$ where $x_i\sim\mathrm{Ber}(p)$.  Then $X^T A X = X_1 \cdot (\sum_{i=2}^n a_{1i} X_i)$ is the product of two independent random variables. If $p$ is chosen such that $p(\sum_{i=2}^n a_{1i})=p\|A\|_{1,\infty}=O(1)$, then  $\sum_{i=2}^n a_{1i} x_i$ is approximately Poisson with parameter $p\|A\|_{1,\infty}$. The tail behavior of $X^T A X$ is controlled by $\exp(-ct/p\|A\|_{1,\infty})$. 
For $\alpha\in(0,1]$, the $\alpha/2$-subexponential tail emerges due to individual terms in the summation $X^T A X = \sum_{i\neq j} a_{ij} X_i X_j$. Conditioning on $x_i$'s, each $a_{ij} X_i X_j$ is a product of two independent $\alpha$-subexponential random variables and has a tail of magnitude $\exp(-(t/K^2 |a_{ij}|)^{\alpha/2})$. For $\alpha> 1$, while the $1/2$-subexponential tail holds (as these random variables are subexponential), this tail bound is sub-optimal. Subsequently, we derive a sharper tail bound that improves this result (given in Theorem \ref{thm:simple}). 

For a general matrix $A$, to control the diagonal sum $S_{\mathrm{diag}}$, we introduce the following bound for the concentration of the sum of independent sparse $\alpha$-subexponential random variables:
\begin{theorem}\label{thm:linear}
Let $X = (X_1, X_2,\ldots, X_n)$ be a random vector with independent centered sparse $\alpha$-subexponential components, where each $X_i$ satisfies $\E(|X_i|^r) \le p_i (K r^{\frac{1}{\alpha}})^r$ for all $r\ge 1$ and some $\alpha \in (0,\infty]$. Let $a=(a_1,\ldots,a_n)$ be a deterministic vector in $\mathbb R^n$. Then for any $t>0$,
 \begin{align}\label{eq:thm_linear}
\Prob\left(\left|\sum_{i=1}^n a_i X_i \right| \ge t \right) \le e^2\exp\left( -c_{\alpha}' \min \left\{ \frac{t^2}{K^2 \sum_i a_{i}^2 p_i}, \left(\frac{t}{K\|a\|_{\infty}}  \right)^{\min\{\alpha,1\}}\right\}\right).
 \end{align}
\end{theorem}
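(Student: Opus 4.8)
The plan is to run the moment method: bound $\E\bigl|\sum_i a_iX_i\bigr|^r$ for every integer $r\ge 2$ and then optimize $r$ in Markov's inequality. Writing $Y_i=a_iX_i$, the $Y_i$ are independent and centered, so for even $r$ (it suffices to treat even $r$, since $L_r$-norms are nondecreasing) the multinomial theorem gives
\[
\E\Bigl(\sum_i Y_i\Bigr)^r=\sum_{\substack{(r_1,\dots,r_n):\ \sum_i r_i=r\\ r_i\neq 1\ \text{for all }i}}\binom{r}{r_1,\dots,r_n}\prod_{i:\,r_i\ge 2}\E\bigl(Y_i^{r_i}\bigr).
\]
I would bound $|\E(Y_i^{r_i})|\le|a_i|^{r_i}\E|X_i|^{r_i}\le|a_i|^{r_i}p_i(Kr_i^{1/\alpha})^{r_i}$, use $|a_i|^{r_i}\le a_i^2\|a\|_\infty^{r_i-2}$ for $r_i\ge 2$, and organize the sum according to the support $S=\{i:r_i\ge 2\}$ with $k=|S|$ ranging over $1,\dots,r/2$. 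Since the $k$-th elementary symmetric function obeys $\sum_{|S|=k}\prod_{i\in S}a_i^2p_i\le \tfrac1{k!}\bigl(\sum_i a_i^2p_i\bigr)^k$, with $V:=\sum_i a_i^2p_i$ this reduces everything to estimating, for each $k$, the multinomial‑weighted sum $P_{r,k}=\sum_{(r_i):\,r_i\ge2,\ \sum r_i=r}\binom{r}{(r_i)}\prod_{i\in S}r_i^{r_i/\alpha}$.

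The heart of the argument is a bound on $P_{r,k}$ that is sharp at both ends of the range of $k$. Using $\binom{r}{(r_i)}=r!/\prod r_i!$ and $r_i!\ge(r_i/e)^{r_i}$ reduces this to controlling $\prod_{i\in S}r_i^{r_i(1/\alpha-1)}$. For $\alpha\ge 1$ this factor is $\le1$, so $P_{r,k}\lesssim(2e)^r r!$, and resumming $r!\sum_k \tfrac{V^k}{k!}\|a\|_\infty^{r-2k}$ over $k$ (the Poissonian sum, dominated by $k=r/2$ when $V\ge r\|a\|_\infty^2$ and by the small‑$k$ end otherwise) produces $\E|\sum a_iX_i|^r\le(CK)^r\bigl((Vr)^{r/2}+(r\|a\|_\infty)^r\bigr)$, which is already the desired moment estimate for $\alpha\in[1,\infty]$ since there $\min\{\alpha,1\}=1$. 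For $\alpha<1$ the exponent $1/\alpha-1$ is positive, and here I would invoke convexity of $x\mapsto x\log x$: over compositions of $r$ into $k$ parts each $\ge2$, the quantity $\sum_i r_i\log r_i$ is maximized at $(r-2(k-1),2,\dots,2)$, so $\prod_{i\in S}r_i^{r_i(1/\alpha-1)}\le C_\alpha^{\,r}\ell_k^{\,\ell_k(1/\alpha-1)}$ with $\ell_k=r-2k+2$. After substituting this, the summand in $k$ is (again by convexity in $k$) dominated by its two endpoints: the $k\approx r/2$ terms, where $\ell_k$ is bounded, reproduce $(Vr)^{r/2}$, and the $k=1$ term, where $\ell_k=r$, reproduces $(r^{1/\alpha}\|a\|_\infty)^r$. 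With $\beta:=\min\{\alpha,1\}$ this yields, for all integers $r\ge 2$,
\[
\E\Bigl|\sum_i a_iX_i\Bigr|^r\le (C_\alpha K)^r\Bigl((Vr)^{r/2}+(V/\|a\|_\infty^2)\,(r^{1/\beta}\|a\|_\infty)^r\Bigr).
\]

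To finish, I would plug into Markov. If $\min\{t^2/(K^2V),(t/(K\|a\|_\infty))^\beta\}$ lies below a suitable $\alpha$‑dependent constant, the claimed right‑hand side is $\ge1$ and there is nothing to prove; otherwise $A:=t^2/(K^2V)\gtrsim_\alpha1$, hence $t\gtrsim_\alpha K\sqrt V$ and therefore $B:=(t/(K\|a\|_\infty))^\beta\gtrsim_\alpha (V/\|a\|_\infty^2)^{\beta/2}$. Choosing $r$ to be the largest even integer below a small multiple of $\min\{A,B\}$ makes the first Markov term $\le e^{-r}$ and the second $\le (V/\|a\|_\infty^2)e^{-r}$; since $(V/\|a\|_\infty^2)^{\beta/2}\lesssim B$ dominates $\log(V/\|a\|_\infty^2)$, the prefactor is absorbed and one obtains $\Prob\bigl(|\sum_i a_iX_i|\ge t\bigr)\le e^2\exp\bigl(-c_\alpha'\min\{A,B\}\bigr)$, as asserted. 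The main obstacle is the estimate of $P_{r,k}$: it must be tight at $k\approx r/2$, where the naive bound $\prod r_i^{r_i/\alpha}\le r^{r/\alpha}$ would inflate $(Vr)^{r/2}$ to $(Vr^{2/\alpha})^{r/2}$ and destroy the Gaussian term, and simultaneously tight at $k=1$, where the heavy‑tail exponent $r^{1/\beta}$ must survive — the convexity argument identifying the extremal composition is exactly what reconciles the two regimes. A secondary nuisance is carrying the $\|a\|_\infty$‑dependent prefactor through the passage to the tail, which the triviality reduction above disposes of.
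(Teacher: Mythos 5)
Your proposal follows essentially the same route as the paper's proof of Theorem~\ref{thm:linear}: expand the $r$-th moment over index patterns in which every coordinate appears at least twice, insert $\E|X_i|^l\le p_i(Kl^{1/\alpha})^l$ and $|a_i|^{l}\le a_i^2\|a\|_\infty^{l-2}$, control the combinatorial weight via the extremal composition $(r-2(k-1),2,\dots,2)$ (your convexity-of-$x\log x$ argument is the same fact as the paper's Lemma~\ref{lem:seq_prod_upper_bound}, proved there by a pairwise exchange), and finish with Markov's inequality at the largest admissible even $r$. Organizing the sum by supports together with $\sum_{|S|=k}\prod_{i\in S}a_i^2p_i\le V^k/k!$ is an interchangeable bookkeeping for the paper's partitions/surjections.

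The one step whose justification does not hold as written is the absorption of the prefactor $Q:=V/\|a\|_\infty^2$ at the tail stage. You argue that in the nontrivial case $A\gtrsim_\alpha 1$ one has $B\gtrsim_\alpha Q^{\min\{\alpha,1\}/2}$, hence $\log Q\lesssim_\alpha B$, and conclude that $Q$ is absorbed into $e^{-r}$ with $r\asymp\min\{A,B\}$. But when $\min\{A,B\}=A\ll\log Q\lesssim B$ (for instance $a$ flat, $p$ constant, $t\asymp K\sqrt{V}\,\sqrt{\log\log n}$, so that $A\asymp\log\log n$ while $Q=np$), $\log Q$ is not dominated by $\min\{A,B\}$ and the stated absorption fails. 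The gap is easily repaired, and the cleanest repair is exactly what the paper does: the prefactor need never appear, since the $k=1$ term (and every intermediate term) can be written as a product of $2k$ factors $\sqrt{rV}$ and $r-2k$ factors $r^{1/\min\{\alpha,1\}}\|a\|_\infty$, hence is at most $\max\{\sqrt{rV},\,r^{1/\min\{\alpha,1\}}\|a\|_\infty\}^r$ up to $C_\alpha^r$ (the paper's Young's-inequality step); alternatively, keeping $Q$, one can use $t\ge e C_\alpha K\sqrt{rV}$ to see that the base of the second Markov term is at most $e^{-1}r^{1/\min\{\alpha,1\}-1/2}Q^{-1/2}$, which kills the prefactor after a short case analysis. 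A related minor point: your ``convexity in $k$'' claim for the summand is not literally correct (the $-\log k!$ contribution is concave), but the conclusion you draw from it — that each term is dominated by the two pure endpoint expressions — is true and follows from the same product decomposition just described.
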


Theorem~\ref{thm:linear} can be seen as a generalization of Bernstein's inequality for $\alpha$-subexponential random variables with a dependence on the variance of the linear form. Our proof technique relies on estimating all moments of the linear form $\langle a, X\rangle$. A discussion of related results to Theorem \ref{thm:linear} is given in Section \ref{sec:old}. 

\subsection{Bennett-type inequalities for sparse random vectors}\label{sec:bennett}

It is worth noting that for $\alpha>1$, the tail bounds in both Theorem \ref{thm:sparse_alpha} and Theorem \ref{thm:linear} are sub-optimal as they do not depend on $\alpha$. To address this limitation, we develop new tail bounds that improve upon both theorems in the large deviation regime for all $\alpha>1$. Below, we present a simplified version of these improved bounds, which have better tail probability estimates compared to Theorems~\ref{thm:sparse_alpha} and \ref{thm:linear}  in the large deviation regime.  {More precise statements of the improved linear form (Theorem~\ref{thm:linearbetter}) and quadratic form  (Theorem~\ref{thm:quadbetter})  are deferred to  Section~\ref{sec:improved}}.

\begin{theorem}[Simplified tail bounds: $\alpha>1$]\label{thm:simple} Let $X = (X_1, X_2,\ldots, X_n)^T$ be a random vector with independent centered sparse $\alpha$-subexponential components, where each $X_i$ satisfies $\E(|X_i|^r) \le p_i (K r^{\frac{1}{\alpha}})^r$ for all $r\ge 1$ and some $\alpha \in (1,\infty]$.
\begin{itemize}
    \item \text{(Linear form) } Let $a=(a_1,\ldots,a_n)$ be a deterministic vector in $\mathbb R^n$. Denote $\lambda^2={\sum_{i=1}^n a_i^2p_i}$. 
    For any $t \gtrsim K\lambda^2/\|a\|_{\infty}$, 
\begin{align}\label{eq:simple-tail-linear}
\Prob\left(\left|\sum_{i=1}^n a_iX_i \right|>t \right) \le e^2 \exp\left( - \frac{C_{\alpha}t}{ K \|a\|_{\infty}} \left[\log\Big( \frac{t\|a\|_{\infty}}{c_\alpha K \lambda^2}\Big) \right]^{1-\frac{1}{\alpha}} \right).
\end{align}

    \item \text{(Quadratic form) }Let $A$ be a diagonal-free $n\times n$ matrix. Denote $\sigma^2=\sum_{i\neq j} a_{ij}^2 p_i p_j$ and $\gamma_{1,\infty}$ as in \eqref{eq:def_gamma_free}. Set $\Lambda:=\max\{\gamma_{1,\infty},(\|A\|_{\max}\sigma^2)^{\frac{1}{3}}\}$.
    For any $t\gtrsim K^2 {\Lambda^2}/{\|A\|_{\max}}$, 
    \begin{align*}
    \Prob\left( \left|X^T A X \right|>t \right) \le e^2 \exp\left(-C_{\alpha}\sqrt{\frac{t}{K^2 \|A\|_{\max}}} \left[\log\left(\frac{t\|A\|_{\max}}{c_{\alpha}K^2 \Lambda^2} \right) \right]^{1-\frac{1}{\alpha}} \right).
    \end{align*}
\end{itemize}
\end{theorem}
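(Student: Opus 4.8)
\medskip
\noindent\emph{Proof strategy.} The plan is to prove both bounds by the moment method: estimate the even moments $\E S^{r}=\E|S|^{r}$ (for even $r$), where $S$ is the linear form $\sum_i a_iX_i$ or the quadratic form $X^TAX$, apply Markov's inequality, and optimize over $r$. The improvement over the proofs of Theorems~\ref{thm:linear} and~\ref{thm:sparse_alpha} will come from \emph{not} collapsing the ``heavy'' terms of the moment expansion to a single worst case, but instead keeping track of the number $m$ of distinct indices (resp.\ the number $V$ of vertices of the associated multigraph), so as to interpolate between the ``light'' (variance) scale and the per-term ``heavy'' scale. For $\alpha>1$ I expect this to give, in the relevant range of $r$, moment bounds of the form
\[
\Big\|\sum_i a_iX_i\Big\|_{L_r}\lesssim_\alpha K\|a\|_\infty\,\frac{r}{[\log(r\|a\|_\infty^2/\lambda^2)]^{1-1/\alpha}},
\qquad
\big\|X^TAX\big\|_{L_r}\lesssim_\alpha K^2\|A\|_{\max}\,\frac{r^2}{[\log(r\|A\|_{\max}/\Lambda^2)]^{2(1-1/\alpha)}},
\]
in place of the crude $r^{1/\alpha}$, resp.\ $r^{2/\alpha}$, growth behind the Bernstein-type bounds; Markov plus an optimization over $r$ should then convert these into the $[\log(\cdot)]^{1-1/\alpha}$ tails, with $\sqrt{t/K^2\|A\|_{\max}}$ in the quadratic case appearing because the extremal quadratic form for this bound is essentially the square of a sparse linear form.

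\medskip
\noindent\emph{Linear form.} I would first expand $\E(\sum_i a_iX_i)^r$ and use independence with $\E X_i=0$: each surviving term is specified by a set $S$ of $m$ distinct indices and multiplicities $r_i\ge2$ ($i\in S$) with $\sum_{i\in S}r_i=r$ (every index must appear at least twice), contributing $\binom{r}{(r_i)}\prod_{i\in S}a_i^{r_i}\E X_i^{r_i}$. Using $|\E X_i^{r_i}|\le p_i(Kr_i^{1/\alpha})^{r_i}$ and summing over the choice of $S$ via $\sum_i|a_i|^{r_i}p_i\le\|a\|_\infty^{r_i-2}\sum_i a_i^2p_i=\|a\|_\infty^{r_i-2}\lambda^2$ (valid since $r_i\ge2$) bounds the total contribution of the configurations with $|S|=m$ by $K^r\|a\|_\infty^{r-2m}\lambda^{2m}\prod_i r_i^{r_i/\alpha}$, times a $1/m!$ from unordering $S$. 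Next I would apply Stirling's bound $\binom{r}{(r_i)}\le e r\cdot r^r/\prod_i r_i^{r_i}$ together with $\prod_i r_i^{r_i}\ge(r/m)^r$ to get $\binom{r}{(r_i)}\prod_i r_i^{r_i/\alpha}\le e r\cdot r^{r/\alpha}m^{r(1-1/\alpha)}$, so that, summing over the at most $2^r$ degree sequences,
\[
\E\Big(\sum_i a_iX_i\Big)^r\ \le\ e r\,(2K\|a\|_\infty)^r r^{r/\alpha}\sum_{m\ge1}\frac{m^{r(1-1/\alpha)}}{m!}\Big(\frac{\lambda^2}{\|a\|_\infty^2}\Big)^m.
\]
Writing $\beta=1-1/\alpha$ and $1/m!\le(e/m)^m$, the $m$-th summand is $m^{\beta r-m}(e\lambda^2/\|a\|_\infty^2)^m$, maximized at $m^\ast$ with $m^\ast\log(m^\ast\|a\|_\infty^2/\lambda^2)\asymp\beta r$, i.e.\ $m^\ast\asymp\beta r/\log(\beta r\|a\|_\infty^2/\lambda^2)$, and I would bound the series by this term up to a bounded factor. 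The factorial is doing the real work here: it forces the logarithm in $m^\ast$ (hence in the moment) to grow with $r$ and keeps $m^\ast/r$ bounded, which avoids an $e^{(\cdot)}$ loss; the hypothesis $t\gtrsim K\lambda^2/\|a\|_\infty$ makes the eventual optimal moment order $r^\ast$ large enough ($r^\ast\gtrsim\lambda^2/\|a\|_\infty^2$) for this to hold. Taking $r$-th roots yields the claimed linear moment bound, and Markov's inequality optimized over $r$ produces $r^\ast\asymp\frac{t}{K\|a\|_\infty}[\log(t\|a\|_\infty/K\lambda^2)]^{\beta}$ and the stated tail, with the threshold on $t$ ensuring $r^\ast\ge2$ and the logarithm of order at least $1$ (the complementary small-deviation range being covered by the Gaussian term of Theorem~\ref{thm:linear}).

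\medskip
\noindent\emph{Quadratic form.} For $X^TAX$ I would run the same scheme with multigraphs in place of index sets: $\E(X^TAX)^r$ expands as a sum over multigraphs on $V\le r$ vertices with $r$ labeled edges in which every vertex has multiplicity $d_v\ge2$ ($\sum_v d_v=2r$), each carrying weight $\prod_{\text{edges }(i,j)}|a_{ij}|\cdot\prod_v p_v\cdot K^{2r}\prod_v d_v^{d_v/\alpha}$, a $1/V!$ from unordering the vertices, and a half-edge count bounded via Stirling and $\prod_v d_v^{d_v}\ge(2r/V)^{2r}$ by $(2r)^{2r/\alpha}V^{2\beta r}$ up to lower-order factors. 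The edge weights I would sum over vertex labels by the standard Hanson--Wright spanning-forest argument --- a doubled edge contributes $\sum_{i,j}a_{ij}^2p_ip_j=\sigma^2$, a leaf (star) edge contributes $\max_i\sum_j|a_{ij}|p_j\le\gamma_{1,\infty}$, and any remaining edge contributes $\|A\|_{\max}$ --- which should bound the $V$-vertex part by roughly $K^{2r}(2r)^{2r/\alpha}V^{2\beta r}\|A\|_{\max}^r(\Lambda^2/\|A\|_{\max})^V/V!$; optimizing over $V$ exactly as in the linear case (the factorial again forcing an $r$-dependent logarithm) then gives the quadratic moment bound above. The point of $\Lambda=\max\{\gamma_{1,\infty},(\|A\|_{\max}\sigma^2)^{1/3}\}$ is that it is exactly the scale at which this heavy term overtakes the light and subexponential parts of the moment --- equivalently, the scale above which the Bennett-type tail improves on the Gaussian tail $\exp(-ct^2/K^4\sigma^2)$ (which forces the cube-root term) and on the subexponential tail $\exp(-ct/K^2\gamma_{1,\infty})$ (which forces $\gamma_{1,\infty}$) of Theorem~\ref{thm:sparse_alpha}. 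Markov's inequality with this moment bound, optimized over $r$, should give $r^\ast\asymp\sqrt{t/K^2\|A\|_{\max}}\,[\log(t\|A\|_{\max}/K^2\Lambda^2)]^{\beta}$ and the claimed tail, with $t\gtrsim K^2\Lambda^2/\|A\|_{\max}$ ensuring $r^\ast\ge2$ and the logarithm of order at least $1$.

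\medskip
\noindent\emph{Main obstacle.} I expect the hard part to be the quadratic case, specifically the graph-combinatorial step: proving that every min-degree-$2$ multigraph on $r$ edges is controlled by the ``right'' monomial in $\sigma^2$, $\gamma_{1,\infty}$, $\|A\|_{\max}$, so that the crossover scale $\Lambda$ as defined --- in particular the cube-root term $(\|A\|_{\max}\sigma^2)^{1/3}$, not something larger --- really is the governing one, while simultaneously retaining the vertex count $V$ precisely enough to recover the full $[\log]^{2(1-1/\alpha)}$ saving rather than dissipating it into a crude count of graph shapes, and coordinating all of this with the degree factors $\prod_v d_v^{d_v/\alpha}$. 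The rest should be bookkeeping: pinning down the absolute constants $c_\alpha,C_\alpha$ and the precise argument of the logarithm so that the stated ranges of $t$ suffice.
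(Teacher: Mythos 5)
Your linear-form argument is essentially the paper's: the paper also expands even moments, keeps the number $k$ of blocks (distinct indices), uses Jensen's bound $\prod_i r_i^{r_i}\ge (r/k)^r$, and optimizes $k\mapsto \Delta^k k^{\beta r}r^{-k}$ with $\Delta=\lambda^2/\|a\|_\infty^2$ (Lemma~\ref{lem:moment-linear}), then converts via Markov; the only cosmetic difference is that the paper states the resulting tail exactly through the Lambert~$W$ function (Theorem~\ref{thm:linearbetter}) and obtains Theorem~\ref{thm:simple} from the bounds \eqref{eq:Wbound}, rather than optimizing $r$ directly as you do. That part of your proposal is sound.

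The quadratic part, however, has a genuine gap exactly at the step you flag as the main obstacle, and your proposed resolution does not work. You want to collapse the graph bookkeeping to the single parameter $V$ (number of vertices) with per-graph label-sum weight $\|A\|_{\max}^r(\Lambda^2/\|A\|_{\max})^V$, and hence a moment bound with $\log(r\|A\|_{\max}/\Lambda^2)$ in the denominator. This weight is not a valid bound: for the multigraph consisting of $r$ parallel edges on two vertices, the label sum is of order $\|A\|_{\max}^{r-2}\sigma^2$, and $\sigma^2\le \Lambda^4$ fails in general (e.g.\ a single entry $a_{12}=1$ with $p$ small gives $\sigma^2\asymp p^2$ while $\Lambda^4\asymp p^{8/3}$). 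Relatedly, your moment-level quantity $r\|A\|_{\max}/\Lambda^2$ is not invariant under $A\mapsto cA$, so the claimed moment bound cannot be correct as stated (the correct dimensionless scale is $r\|A\|_{\max}/\Lambda$, which squares to the $t\|A\|_{\max}/(K^2\Lambda^2)$ of the theorem once $r\asymp\sqrt{t/(K^2\|A\|_{\max})}$). The structural reason a one-parameter scheme fails is that the cube-root term $(\|A\|_{\max}\sigma^2)^{1/3}$ in $\Lambda$ arises from an interplay between the label-sum bound and the graph \emph{count}: the paper's Lemma~\ref{lem:main} keeps both the vertex count $k$ and the number of connected components $t$, giving $\mathcal S_{G}\le \|A\|_{\max}^{r-k}\gamma_{1,\infty}^{k-2t}\sigma^{2t}$, while the number of admissible partitions carries an extra factor $r^{2r-k-t}$ (note the $r^{-t}$), so each cycle/doubled-edge component contributes $\sqrt{r}\,\sigma$ rather than $r\sigma$. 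The improved quadratic moment bound (Lemma~\ref{lem:momentquad}) then comes from a genuinely two-variable optimization of $g_\alpha(k,t)$ with a case split, producing $\min\{\log(r/\Delta_1),\log(r^{3/2}/\Delta_2)\}$ with $\Delta_1=\gamma_{1,\infty}/\|A\|_{\max}$, $\Delta_2=\sigma/\|A\|_{\max}$; only after passing to the tail (Theorem~\ref{thm:quadbetter} via Lambert~$W$, then \eqref{eq:Wbound}) do the two branches merge into the single scale $\Lambda$ of Theorem~\ref{thm:simple}. To repair your argument you would need to retain $t$ (or an equivalent component/cycle statistic) alongside $V$ and redo the optimization in two variables; the sub-Gaussian and $\gamma_{1,\infty}$ regimes you plan to delegate to Theorem~\ref{thm:sparse_alpha} do not substitute for this, since the missing $r^{-t}$ and the $\sigma^{2t}$ exponent enter precisely the large-deviation branch you are trying to bound.
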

Compared to Theorems~\ref{thm:sparse_alpha} and \ref{thm:linear},  when $\alpha>1$, for sufficiently large $t$, the tail probability gains an additional $\log^{1-1/\alpha}(t)$ factor, analogous to the improvement from Bernstein's inequality to Bennett's inequality.
Notably, for the sparse bounded random variables ($\alpha=\infty$), \eqref{eq:simple-tail-linear} matches Bennett's inequality in the large deviation regime (up to constants). A detailed comparison is presented in Section \ref{sec:old}. The tail behavior described by Bennett's inequality in the large deviation regime for bounded random variables is known to be sharp (up to universal constants); see \cite[Example 2.4]{major2005tail}. To the best of our knowledge, these Bennett-type inequalities are new for both the linear form when $\alpha\in (1,\infty)$ and the quadratic form when $\alpha\in (1,\infty]$ of sparse $\alpha$-subexponential random variables.

\medskip

The proof of  Hanson-Wright inequalities for sub-gaussian random vectors in \cite{RV13} relies on bounding the exponential generating function with a Gaussian comparison argument. Due to the sparsity parameters, a direct adaptation of the proof in \cite{RV13} will yield a dependence on the sub-gaussian norm of $X_i$, which does not have the optimal dependence on $p_i$. Moreover, this technique does not apply to $\alpha$-subexponential random variables. Instead, our proof is based on the moment method, which captures the correct variance term in the sub-gaussian tail part and the subexponential tail part also depends on the sparsity parameters. The moment method approach was used in \cite{HKM19,schudy2011bernstein,schudy2012concentration}. Our inequality improves the ones in \cite{HKM19} (corresponding to $\alpha=\infty$) to have Bennett-type inequalities for both linear and quadratic forms given in Theorems \ref{thm:linearbetter} and \ref{thm:quadbetter}. We also generalize and improve the inequalities from \cite{schudy2011bernstein} to sparse $\alpha$-subexponential random vectors for all the regime $\alpha \in(1,\infty]$. A detailed discussion and comparison with the existing literature is presented in Section \ref{sec:old}.

In Section~\ref{sec:application}, we present two applications of the new sparse Hanson-Wright inequalities. The first application establishes a new local law and eigenvector delocalization for Hermitian random matrices with independent sparse $\alpha$-subexponential entries. Notably, we derive the sharpest known upper bound for the infinity norm of eigenvectors in Wigner matrices with sparse $\alpha$-subexponential entries for any $\alpha\in (0,\infty]$ when the sparsity parameter $p\gtrsim \frac{\log^{2\vee (1+2/\alpha)} (n)}{n}$. 
For $\alpha\ge 2$, we extend the local law and eigenvector delocalization results to the super-critical regime $p\gtrsim \frac{\log (n)}{n}$, matching the bounds in \cite{HKM19} ($\alpha=\infty$) and generalizing them to any $\alpha\ge 2$ beyond the bounded support case. 
The second application addresses the concentration of the norm of linear transformations of sparse $\alpha$-subexponential random vectors. We anticipate that our new concentration inequalities will find further applications in the analysis of sketching algorithms \cite{derezinski2021sparse} and for the large deviation principle for weighted random graphs \cite{augeri2025large}.

\medskip 

\noindent{\bf Organization:} The rest of the paper is organized as follows.  Section \ref{sec:old} reviews existing literature on sparse Hanson-Wright inequalities and concentration of sums of sparse random variables, along with a discussion and comparison of our findings with previous work. We present two applications of our sparse Hanson-Wright inequality in Section~\ref{sec:application}. Generalization of our main results to general matrix and non-centered random vectors are given in Section~\ref{sec:generalization}.
The proofs of our main theoretical results are provided in Sections \ref{sec:proofmain} and \ref{sec:improved}, while Section \ref{sec:proof-apply} contains the proofs of the applications. 

Auxiliary results are presented in Appendices~\ref{sec:appendix}.  Appendices \ref{app:delo-proof} and \ref{app:local} provide additional proofs for the applications.  Finally, the proof of corollaries are presented in  Appendix~\ref{App:B}. 

\section{Previous results and comparison}\label{sec:old}

Recent progress in the Hanson-Wright-type inequalities and more generally, the polynomials of independent random variables, can be found in \cite{latala99, VW15, Adamczak15, Zhou19, HKM19, Zajkowski20, KZ20,Bellec19, Zhou20, LC21, CY21,gotze2021concentration, sambale2023some, augeri2025large,kim2000concentration, schudy2012concentration,dadush2018fast,buterus2023some,wang2024deformed} and references therein. For the following discussion in this section, let $A$ denote a diagonal-free, symmetric matrix.   We mainly focus on the results of concentration for quadratic and linear forms of $\alpha$-subexponential random vectors and sparse random vectors.

\paragraph{Quadratic forms for $\alpha$-subexponential random vectors}
Specifically, an extension from sub-gaussian to $\alpha$-subexponential random vectors was obtained in \cite[Corollary 1.4 (2)]{gotze2021concentration}:
	for any $\alpha\in(0,1]$, let $X_1,\dots, X_n$ be independent centered random variables with $\|X_i\|_{\Psi_{\alpha}}\leq K$. For any $t\geq 0$,
\begin{align}\label{eq:Gotze}
  \mathbb P&\left(|X^TAX-\mathbb EX^TAX|\geq t \right) \nonumber\\
  &\leq 2\exp \left(-{C_{\alpha}}\min \left \{ \frac{t^2}{K^4\|A\|_{F}^2}, \frac{t}{K^2 \|A\|},\left(\frac{t}{K^2\|A\|_{2,\infty}}\right)^{\frac{2\alpha}{2+\alpha}}, \left(\frac{t}{K^2 \|A\|_{\max}} \right)^{\alpha/2} \right\} \right).
\end{align}
The above tail bound encompasses up to four different regimes. It represents a refined version of the analog to the Hanson-Wright inequality (for large $t$), as presented in Proposition 1.1 of \cite{gotze2021concentration}:
\begin{align}\label{eq:gotze}
\mathbb P&\left(|X^TAX-\mathbb EX^TAX|\geq t \right) \leq 2\exp \left(-{C_{\alpha}}\min \big\{ {t^2}/{K^4\|A\|_{F}^2}, \left({t}/{K^2\|A\|}\right)^{\alpha/2} \big\} \right).
\end{align}
The extension of \eqref{eq:gotze} to any $\alpha \in (0,2]$ is discussed in Sambale's work \cite{sambale2023some}.  
Note that for the results mentioned above, their sub-gaussian tail depends on the $\Psi_{\alpha}$-norm of the $X_i$, which does not capture the sharp dependence on the sparsity parameter $p$ when $X_i$ is sparse. For related work on moment estimates, two-sided bounds for the symmetric log-convex case ($0<\alpha<1$) have been derived in \cite{KL15} for general multilinear forms, while the log-concave case ($\alpha\ge 1$) has been studied under various conditions in \cite{latala1996tail, latala99, AL12}. The proofs in \cite{gotze2021concentration,sambale2023some} proceed by converting a quadratic chaos
into a bilinear form in two independent copies, after which one typically invokes general
multi-level concentration inequalities expressed in terms of $\Psi_\alpha$-norms and matrix norms. Such tools are very robust and are well aligned with the regime $0<\alpha\le 1$. Indeed, in subsequent work \cite{DHWZ25} we develop a decoupling-based
approach that yields sharper sparse Hanson--Wright inequalities for $\alpha\in(0,1].$

For the present paper, our main improvement for $\alpha>1$ is of Bennett type and includes an
additional logarithmic gain. This refinement relies on retaining the explicit $p_i$-factors in
high-moment bounds and optimizing over the moment order; such a gain is typically not captured when sparsity is first summarized through global $\Psi_\alpha$-norm control. It would be interesting to understand whether a decoupling framework can be combined with comparably sharp moment bookkeeping to recover the same Bennett-type improvement for $\alpha>1$. We refer to \cite{gotze2021concentration} for a detailed discussion of the related results.

\paragraph{Quadratic forms for sparse random vectors}
Now we turn to the results that have a better dependence on $p$ compared to \eqref{eq:gotze} for sparse random vectors. Zhou \cite{Zhou19} initiated the study of sparse Hanson-Wright inequality that captures the dependence on $p_i$ for sparse sub-Gaussian random vectors $(\alpha=2)$, incorporating $p_i$'s in the sub-gaussian tail of $\eqref{eq:gotze}$. Notably, when $p_i$ are constant,  Zhou's sparse Hanson-Wright inequality recovers \eqref{hw}.
Some generalizations of Zhou's inequality can be found in \cite{PWL23,wu2023precise}. In a follow-up work, Zhou \cite{Zhou20} examined quadratic forms of non-identical independent Bernoulli variables, bounding the moment-generating function of the off-diagonal term.  Augeri \cite{augeri2025large} studied sparse bounded random variables with uniform sparsity $p$ and $\|A\| \le 1$, obtaining sharp tail bounds in the large deviation regime for $1 \gg p \gg \log n/n$ (Lemma 2.8 from \cite{augeri2025large}). He, Knowles, and Marcozzi \cite{HKM19} established an upper bound on the $L_r$ norms $\|X^T A X\|_{L_r}$ for $X_i$'s centered $\mathrm{Ber}(p)$, which proved crucial for analyzing sparse Erd\H os-R\'enyi random graphs in the supercritical regime. Their result has been generalized to sparse sub-gaussian random vectors in \cite[Lemma 7.4]{augeri2023large}.
 While our Theorems \ref{thm:linearbetter} and \ref{thm:quadbetter} draw inspiration from their work, they ultimately surpass the bounds established in \cite{HKM19}.

The study of the concentration of U-statistics also offers insights that are useful for deriving tail estimates of random quadratic forms. In Gin\'e, Lata{\l}a, and Zinn's study \cite{GLZ00}, by applying decoupling results and performing straightforward calculations, one can obtain the following tail estimate as a special case of their Theorem 3.3 in \cite{GLZ00}: consider $x=(x_1,\cdots,x_n)^T$ with independent components $x_i$ being centered Bernoulli random variables with parameter $p_i$. For any $t>0$, 
\begin{align}\label{eq:GLZ}
&\mathbb P \left( \left| x^TAx\right|\ge t \right) \le  2\exp \left (-c \min\left\{ \frac{t^2}{\sum_{i\neq j}a_{ij}^2 p_i p_j},\frac{t }{B}, \left( \frac{t}{D}\right)^{2/3}, \left( \frac{t}{\|A\|_{\max}}\right)^{1/2}\right\} \right),
\end{align} 
where $B = (\max_i p_i) \|A\|$ and $D=\max_j \sqrt{\sum_i a_{ij}^2 p_i}$.

Recently, Chakrabortty and Kuchibhotla \cite{CK18} studied a class of unbounded U-statistics. Their results (Theorem 1 in \cite{CK18}) yield moment estimates of the quadratic forms for sparse $\alpha$-subexponential random variables with each $X_i = x_i \xi_i$, where $x_i \sim \mathrm{Ber}(p)$ and $\xi_i$ are independent, centered $\alpha$-subexponential random variables. Notably, for $\alpha \in (0,1]$, the tail bound implied by these moment estimates is consistent with \eqref{eq:Gotze} for constant $p$, up to the $\log n$ factors. For $\alpha > 1$, the tail bound is equivalent to that of $\alpha = 1$, again up to $\log n$ factors. Recent work by \cite{Bak23} has extended the results in \cite{CK18}, generalizing their findings to a broader class of unbounded, heavy-tailed U-statistics.

 Schudy and Sviridenko \cite{schudy2011bernstein} derived concentration inequalities for polynomials of independent \emph{central moment bounded} random variables, improving upon their earlier work \cite{schudy2012concentration}. The central moment bounded random variables encompass bounded and continuous or discrete log-concave random variables. 
    When $X_1,\ldots,X_n$ are independent sparse exponential random variables, where $X_i = x_i \eta_i$ with independent $x_i \sim \mathrm{Ber}(p_i)$ and centered $\eta_i \sim \mathrm{Exp}(K^{-1})$,  their results yield: 
\begin{align}\label{eq:Bernoulli_polynomial}
\mathbb P \left( \left| X^T A X \right|\ge t \right)
\leq  2\exp \left (-C \min\left\{ \frac{t^2}{K^4 \sum_{i,j}a_{ij}^2p_ip_j},\frac{t }{K\gamma_{1,\infty}}, \left( \frac{t}{K^2\|A\|_{\max}}\right)^{1/2}\right\} \right).
\end{align} 
This result \eqref{eq:Bernoulli_polynomial} can be generalized to sparse $\alpha$-subexponential random variables for all $\alpha \in [1,\infty]$, using a contraction principle (see Theorem 1 in \cite{kwapien1987decoupling}) alongside the decoupling technique. 

Addressing the asymptotic behavior, \cite{bhattacharya2021asymptotic} established the limiting distributions for the random quadratic form $S_n=\sum_{i,j} a_{ij} x_i x_j$ with symmetric, $\{0,1\}$-valued, diagonal-free $A$ and i.i.d. Bernoulli $x_i$, focusing on the sparse regime where $\E(S_n) = O(1)$.

\paragraph{Comparison to Theorem~\ref{thm:sparse_alpha} and Theorem~\ref{thm:quadbetter}} 
We now compare our results with these key findings from the literature discussed above. For $\alpha \in [1,\infty]$, the bound \eqref{eq:Bernoulli_polynomial} aligns with our Theorem \ref{thm:sparse_alpha}, while our Theorem \ref{thm:quadbetter} provides an improvement over the term $\left( \frac{t}{K^2\|A\|_{\max}}\right)^{1/2}$ in the large deviation regime. The result in \eqref{eq:Gotze} represents the special case where all sparse parameters $p_i=1$. Notably, \eqref{eq:GLZ}, which represents the sparse bounded case, is consistent with \eqref{eq:Gotze} when $\alpha=1$, while incorporating the sparse parameters $p_i$. 

For $\alpha \in (0,1)$, our Theorem \ref{thm:sparse_alpha}, when all $p_i=1$, yields a weaker bound than \eqref{eq:Gotze}. This can be seen from the corresponding moment inequalities:
\begin{align}\label{eq:compareG}
r^{\frac{2+\alpha}{2\alpha}} K^2 \|A\|_{2,\infty}   \le r^{\frac{1}{\alpha}+ \frac{1}{2}} K^2 \|A\|_{1,\infty}\|A\|_{\max} \le r^{\frac{2}{\alpha}}K^2 \|A\|_{\max} + rK^2 \|A\|_{1,\infty}.
\end{align}
Using a similar argument, when $X_i$'s are centered Bernoulli random variables, \eqref{eq:GLZ} outperforms our Theorem \ref{thm:sparse_alpha} when $\alpha=\infty$. For $\alpha=2$, unlike \eqref{eq:gotze}, taking $p_i=1$ in either Theorem \ref{thm:sparse_alpha} or Theorem \ref{thm:quadbetter} does not recover the Hanson-Wright inequality for sub-gaussian random vectors in \cite{RV13}. However, in Theorem \ref{thm:quadbetter} (see also Theorem~\ref{thm:simple}), we improve the large deviation term to $\left(\frac{t}{\|A\|_{\max}}\right)^{1/2}\log^{1-\frac{1}{\alpha}}\left(\frac{t\|A\|_{\max}}{C_{\alpha}\Lambda^{2}}\right)$ for $t$ sufficiently large. These additional log factors are essential for our application for local law and eigenvector delocalization of sparse random matrices in Section~\ref{sec:local_law}.

A natural direction for future work is to derive a refined concentration inequality that unifies our results (Theorem \ref{thm:sparse_alpha} for $\alpha \in (0,1]$ and Theorem \ref{thm:quadbetter} for $\alpha\in (1,\infty]$) and the above results. Our techniques naturally extend to polynomials of independent random variables, as studied in \cite{gotze2021concentration, GLZ00, schudy2011bernstein, sambale2023some}; these extensions will appear in forthcoming work.

\paragraph{Linear forms of random vectors}
The concentration of the sum of independent random variables $\sum_{i=1}^n a_i X_i$ has been an area of intense research focus, with a rich body of literature. Our main contribution in this direction is the establishment of new Bennett-type inequalities in Theorem \ref{thm:linearbetter} (see also Theorem~\ref{thm:simple}) for the sum of sparse $\alpha$-subexponential random variables when $\alpha>1$. We will briefly review some key results.

For bounded random variables, the classical Bennett's inequality \cite{bennett1962probability} (see \cite[Theorem 2.9]{BLMbook}) provides the following bound: For $\max_i X_i \le K$,
 \begin{align}\label{eq:Bennett}
\Prob\left(\sum_{i=1}^n a_i (X_i-\E(X_i)) \ge t \right) \le \exp\left( - \frac{\nu^2}{K^2 \|a\|_{\infty}^2}h\left(\frac{K\|a\|_{\infty} t}{\nu^2} \right)\right),
 \end{align}
where $\nu^2 = \sum_{i=1}^n a_i^2\E(X_i^2)$ and $h(u)=(1+u)\log(1+u)-u$ for $u>0$. 
In particular, when $t\gtrsim \frac{\nu^2}{K\|a\|_{\infty}}$, the tail bound on the RHS of \eqref{eq:Bennett} is approximately $\exp\left( - \frac{t}{C K\|a\|_{\infty}} \log\left(\frac{K\|a\|_{\infty} t}{\nu^2} \right) \right).$ Compared to our Theorem \ref{thm:simple} where $\alpha=\infty$, since $\nu^2 \le K^2 \sum_i a_i^2 p_i = K^2 \lambda^2$, replacing $\nu^2$ with $K^2 \lambda^2$ in this tail bound yields exactly \eqref{eq:simple-tail-linear} up to constants. As shown in \cite[Example 2.4]{major2005tail}, the large deviation behavior captured by Bennett's inequality achieves optimal sharpness (up to universal constants) for bounded random variables.
Improved and refined versions of Bennett's inequality for the sum of independent bounded random variables have been developed, incorporating the Lambert W function; see, for instance, \cite{Zheng18}, \cite{Jebara18}, and \cite{Light20}.

For sums of independent symmetric $\alpha$-Weibull random variables, two-sided bounds have been established in the literature: Gluskin and Kwapie\'n \cite{gluskin1995tail} derived both moment and tail bounds for the sum of $\alpha\ge 1$, while Hitczenko, Montgomery-Smith, and Oleszkiewicz \cite{hitczenko1997moment} obtained moment inequalities for $\alpha< 1$. These results, combined with the symmetrization and contraction principles, enable one to obtain moment estimates for the $\alpha$-subexponential case. Although an exact analogue of our Theorem \ref{thm:linear} for sparse $\alpha$-subexponential random variables has not been found, a similar conclusion might be achievable through existing moment estimates combined with appropriate conditioning techniques. In this direction, we highlight two relevant results from \cite{KC22}. In \cite[Theorem 3.1]{KC22}, the authors leverage results from \cite{Latala97} to establish
 \begin{align*}
\Prob\left(\left|\sum_{i=1}^n a_i X_i \right| \ge t \right) \le 2\exp\left( -C_{\alpha} \min \left\{ \frac{t^2}{K^2 \|a\|_2^2}, \left(\frac{t}{K L_\alpha(a)} \right)^{\alpha} \right\}\right),
 \end{align*}
where $L_\alpha(a)=\|a\|_\infty$ if $\alpha \le 1$ and $L_\alpha(a)=\|a\|_{\frac{\alpha}{\alpha-1}}$ if $\alpha > 1$. The bounds above are sharp with respect to their dependence on the vector $a$. However, these results do not capture the dependence on the sparsity parameters $p_i$. The tail bounds that scale with variance are presented in \cite[Theorem 3.2]{KC22}, utilizing proof techniques from \cite{adamczak2008tail}: 
 \begin{align}
2\exp\left( - \min \left\{ C\frac{t^2}{\sum_i a_{i}^2 p_i\E\xi_i^2}, C_\alpha\left(\frac{t}{K\|a\|_{\infty}\log^{\frac{1}{\alpha}}(n+1)}  \right)^{\min\{\alpha,1\}}\right\}\right).
 \end{align}
The factor $(\log(n + 1))^{1/\alpha}$ is shown to be sharp by \cite[Section 2.2]{adamczak2008tail}, assuming only $\|X_i\|_{\psi_\alpha} < \infty$ and variance-based norm bounds. In contrast, our Theorem \ref{thm:linear} eliminates this logarithmic term in the subexponential tail by employing a variance proxy $K^2\sum_{i} a_i^2 p_i$ in the sub-gaussian tail.

\paragraph{Technical novelty and comparison of approaches.}
Although our proofs are based on the moment method, the improvement in the final tail bounds
is not merely a consequence of applying Markov's inequality to high moments.
The key novelty is a graph-structured moment expansion together with a refined counting
argument tailored to sparse inputs.
Concretely, when expanding $\E\big|\sum_{i\neq j} a_{ij}X_iX_j\big|^r$,
each monomial corresponds to a labeled multigraph whose vertices are the indices that appear,
and whose edge-multiplicities encode how often each factor $X_iX_j$ is used.
Our estimates exploit this structure by organizing the contribution according to graph features so that each time a new index is activated one incurs an explicit multiplicative penalty through the
sparsity parameters $(p_i)$.
This approach is in the spirit of the moment-method analyses in \cite{HKM19,schudy2011bernstein,schudy2012concentration},
but we leverage the graph structure more systematically and use a sharper enumeration of the relevant combinatorial configurations than \cite{schudy2011bernstein,schudy2012concentration}. These refinements are what yield the correct variance proxy $\sum_{i\neq j} a_{ij}^2 p_ip_j$ and the sparsity-adaptive parameters (e.g.\ $\gamma_{1,\infty}$ and $\|A\|_{\max}$) in the large-deviation regime.

In contrast to approaches that summarize the inputs through global Orlicz norms and then apply general multi-level polynomial concentration (as discussed above), our moment bounds retain the explicit $p_i$-weights throughout the optimization over the moment order.
This is essential for our Bennett-type improvements for $\alpha>1$, including the additional
logarithmic gain in Theorems~\ref{thm:linearbetter} and~\ref{thm:quadbetter}.

\medskip

We conclude the literature review with two open questions. It is unclear to us if a sparsity-adaptive Hanson-Wright inequality exists for Bernoulli random vectors which is optimal for all sparsity regimes. Taking $p_i$ to be a constant independent of $n$ in our inequalities does not recover the Hanson-Wright inequality for sub-gaussian random vectors in \cite{RV13}. For sparse sub-gaussian random vectors, it would be interesting to unify our Theorem~\ref{thm:quadbetter} and Zhou's inequality in \cite{Zhou19}. 
After completing this manuscript, we posted a related preprint \cite{DHWZ25} that further refines
sparse Hanson-Wright inequalities for quadratic forms for $\alpha\in(0,2]$. In particular, for
$0<\alpha\le 1$ it yields a sharpened bound that is optimal in several canonical examples, and it
also recovers (up to constants) the best-known non-sparse inequalities when $p_i\equiv 1$.
These results are complementary to the present paper: \cite{DHWZ25} focuses on $0<\alpha\le 1$, whereas here we emphasize Bennett-type improvements for $\alpha>1$ (for both linear and quadratic forms), including an additional logarithmic gain in the large-deviation regime obtained via our graph-based moment method.


\section{Applications}\label{sec:application}

\subsection{Sparse Wigner matrices: local law and eigenvector delocalization}\label{sec:local_law}

In this section, we apply concentration inequalities developed in this paper to study the spectral statistics of \emph{sparse Wigner matrices with $\alpha$-subexponential entries} for $\alpha>0$. Specifically, we consider a symmetric matrix $H \in \mathbb R^{n\times n}$ with sparse $\alpha$-subexponential entries defined as
\begin{align}\label{eq:H}
H=\frac{1}{\sqrt{np}} \widetilde{H} = \frac{1}{\sqrt{np}} (\widetilde{H}_{ij}),
\end{align}
where $\widetilde{H}_{ij}$ ($1\le i \le j \le n$) are independent centered random variables with the following assumptions:

\begin{assumption}\label{assumption:tildeH}
Each entry $\widetilde{H}_{ij}$ has the form $\widetilde{H}_{ij} = x_{ij} \xi_{ij}$, where $x_{ij}$ and $\xi_{ij}$ are independent, with $x_{ij} \sim \mathrm{Ber}(p)$ and $\xi_{ij}$ having mean 0, variance 1. Here $p=p(n)\in (0,1)$.
    We assume that for any $r\ge 1$, $\max_{i,j} \|\xi_{ij}\|_{L_r}\le K r^{\frac{1}{\alpha}}$, where $K\geq 1$ is a constant. When $\alpha=\infty$, this is interpreted as the $\xi_{ij}$'s being bounded by $K$. 
\end{assumption}

Denote the eigenvalues of $H$ as $\lambda_1(H),\cdots, \lambda_n(H)$ and their corresponding $\ell_2$-normalized eigenvectors as $u_1,\cdots,u_n$. When $np\to \infty$, it is known that the empirical spectral distribution (ESD) of $H$, defined as $\mu_n:=\frac{1}{n}\sum_{i=1}^n \delta_{\lambda_i(H)}$, converges weakly to the \emph{semicircle} law in probability \cite{wigner1993characteristic,TVW13}. When $\xi_{ij}=1$, $H$ is the adjacency matrix of Erd\H{o}s-R\'{e}nyi graph, and its spectral properties, including the local law and eigenvector delocalization \cite{TVW13,HKM19}, universality \cite{EKYY12,EKYY13,huang2015bulk}, and extreme eigenvalues \cite{BBK20,alt2021extremal}, have received considerable attention in recent years. When $\xi_{ij}$ is a bounded random variable, several extensions have been obtained in \cite{alt2021extremal,tikhomirov2021outliers,augeri2025large,DZ19}. When $p=\frac{c}{n}$, the large deviation principle of the spectral edge was studied in \cite{ganguly2024spectral,ganguly2022large}.

We establish the local law and complete eigenvector delocalization for $H$, analogous to He, Knowles, and Marcozzi's work \cite{HKM19} that studied the (normalized) adjacency matrix of the Erd\H{o}s-R\'enyi random graph $G(n,p)$ in the supercritical regime $np\ge C \log n$. As shown in \cite{alt2022completely}, this is the optimal sparsity level (up to a constant factor) for complete eigenvector delocalization. To the best of our knowledge, this is the first local law and complete delocalization result for sparse $\alpha$-subexponential random matrices down to the near-optimal sparsity $p\geq \frac{\mathrm{polylog}(n)}{n}$ and for unbounded sparse sub-gaussian random matrices down to the optimal sparsity $p\geq \frac{C\log n}{n}.$

Our methodology follows the framework of \cite{HKM19}, a method that was developed through a series of works \cite{EKYY13, EKYY12} (see references therein) and is comprehensively surveyed in \cite{BGK16}. The key innovation in our approach lies in the application of the large deviation inequalities developed in this paper to handle the sparse $\alpha$-subexponential setting.

The main tool to study the spectral statistics of $H$ is via its \emph{Green function} 
\[ G(z):= (H-z)^{-1}\]
for $z\in \mathbb{C}\setminus\{\lambda_1(H),\ldots,\lambda_n(H)\}$. Here we abbreviate $(H-zI_n)^{-1}$ as $(H-z)^{-1}$. Define the spectral domain  
\begin{align}\label{eq:S}
\mathbf{S}:=\{ E + \ii \eta \in \mathbb{C} : |E|\le 10, n^{-1} < \eta \le 1 \}.
\end{align}

For $\Im z\neq 0$, the Stieltjes transform $m(z)$ of the semicircle law is defined as
\[ m(z):= \int \frac{\rho_{sc}(dx)}{x-z}, \quad \rho_{sc}(dx):= \frac{1}{2\pi} \sqrt{(4-x^2)_+} dx.\]
Furthermore, $m(z)$ is characterized as the unique solution of the quadratic equation 
\begin{align}\label{eq:m}
m(z) + \frac{1}{m(z)} + z =0
\end{align}
such that $\Im m(z)>0$ for $\Im z >0$. We derive the following local law that estimates the matrix elements of $G(z)$ for $z\in \mathbf S$. Assume $n\ge 10$.
\begin{theorem}[Local law for sparse Wigner matrix with $\alpha$-subexponential entries]\label{thm:local} Consider the matrix $H$ in \eqref{eq:H} under Assumption~\ref{assumption:tildeH} and its Green function $G(z)$. For any $D>0$ and $\delta \in (0,1)$, there exists $L \equiv L(\delta, D)$ such that the following holds.  Let $  p\ge L C_\alpha^2 K^4 \frac{(\log n)^{\max\{2,\frac{2}{\alpha} \}}}{n}$. For $z=E+\ii \eta \in \mathbf S$ with $\eta \ge L C_\alpha^2 K^4 \frac{\log^2 n}{n}$, we have
\begin{align}\label{eq:entryG}
\Prob\left( \max_{i,j} \left| G_{ij}(z) - m(z) \delta_{ij} \right| \le \delta \right) \ge 1- n^{-D}.
\end{align}
Here, $C_\alpha\ge 1$ is an absolute constant depending only on $\alpha$.
\end{theorem}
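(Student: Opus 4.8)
The plan is to follow the self-consistent-equation (fluctuation-averaging) framework of He, Knowles, and Marcozzi \cite{HKM19} and Erd\H{o}s--Knowles--Yau--Yin \cite{EKYY13}, replacing every use of their Bernoulli large-deviation estimates with the new sparse $\alpha$-subexponential concentration inequalities, Theorems~\ref{thm:linear}, \ref{thm:sparse_alpha}, and their Bennett-type refinements Theorems~\ref{thm:linearbetter} and \ref{thm:quadbetter}. The first step is to set up the standard notation: writing $G^{(i)}$ for the Green function of the minor $H^{(i)}$ with the $i$-th row and column removed, one has the Schur complement identity $G_{ii} = (H_{ii} - z - \sum_{k,l \ne i} H_{ik} G^{(i)}_{kl} H_{li})^{-1} = (-z - m - [Z_i + \Upsilon_i])^{-1}$, where $Z_i := \sum_{k,l \ne i} H_{ik} G^{(i)}_{kl} H_{li} - \frac{1}{n}\sum_{k \ne i} G^{(i)}_{kk}$ is the centered quadratic-form error and $\Upsilon_i$ collects lower-order deterministic corrections. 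The heart of the argument is the \emph{large-deviation bound} controlling $Z_i$: conditionally on $H^{(i)}$, $Z_i$ is exactly a centered quadratic form $\sum_{k \ne l} a_{kl} H_{ik}H_{il}$ with $a_{kl} = G^{(i)}_{kl}$ plus a diagonal piece $\sum_k (G^{(i)}_{kk} - \frac1n \sum_l G^{(i)}_{ll}) H_{ik}^2$, so Theorem~\ref{thm:sparse_alpha} (and in the regime $\alpha>1$, Theorem~\ref{thm:quadbetter}) applies with $p_k = p/(np) = 1/n$ after the $1/\sqrt{np}$ normalization. Using the a priori bounds $\|G^{(i)}\| \le 1/\eta$, $\max_{kl}|G^{(i)}_{kl}| \le \Phi$, $\sum_l |G^{(i)}_{kl}|^2 = (\Im G^{(i)}_{kk})/\eta \le \Psi^2$ (where $\Phi, \Psi$ are the current control-parameter guesses), one gets $\sigma^2 \lesssim \Lambda^2/(n^2 \eta)$-type bounds, $\gamma_{1,\infty} \lesssim \Psi^2/(np) \cdot$(something), and $\|A\|_{\max} \lesssim \Phi/(np)$; plugging these into \eqref{eq:quad-newtail} and using the hypothesis $\eta \ge L C_\alpha^2 K^4 \log^2 n / n$ and $np \ge L C_\alpha^2 K^4 (\log n)^{\max\{2,2/\alpha\}}$ yields $|Z_i| \le (\log n)^{C} \big( \Phi + \sqrt{\Im m / (n\eta)} + 1/\sqrt{np}\big)$ with probability $\ge 1 - n^{-D-1}$, which is exactly the form needed.

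Given this large-deviation input, the next step is the standard bootstrap/continuity argument on $\eta$. Define the event that $\max_{ij}|G_{ij} - m\delta_{ij}| \le \Lambda$ for a control parameter $\Lambda = \Lambda(z)$ to be determined; the self-consistent equation $G_{ii}^{-1} = -z - m - (Z_i + \Upsilon_i)$ combined with the stability of the equation \eqref{eq:m} (the map $v \mapsto (-z-m-v)^{-1} + m$ is a contraction of the relevant quantity away from the spectral edge, and one only needs the crude bound $|E| \le 10$, $\eta \ge n^{-1}$ here since $\delta$ is a fixed constant, not a sharp $n$-dependent bound) shows that $\max_i |G_{ii} - m| \lesssim \max_i |Z_i + \Upsilon_i|$. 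For the off-diagonal entries one uses $G_{ij} = -G_{ii} G^{(i)}_{jj} (H_{ij} + \sum_{k,l} H_{ik} G^{(ij)}_{kl} H_{lj} - \cdots)$ and bounds each term by Theorems~\ref{thm:linear} and \ref{thm:sparse_alpha} (linear form $\sum_k H_{ik} G^{(ij)}_{kj}$ and a bilinear form), again getting a $(\log n)^C/\sqrt{np}$-type bound. Then one runs the standard continuity argument: the bound holds trivially at $\eta = 1$, the set of $\eta$ where it holds is open and closed by a grid argument (choosing $n^{10}$ values of $z$ and using the $1/\eta^2 \le n^2$ Lipschitz continuity of $G$), and the gap between the a priori bound and the improved bound allows one to propagate down to $\eta = L C_\alpha^2 K^4 \log^2 n / n$. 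A union bound over $i,j$ and the grid costs only polynomially many factors of $n$, absorbed by choosing $L$ large in terms of $D$.

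The main obstacle — and the reason the new concentration inequalities are needed rather than just \cite{HKM19} — is getting the right power of $\log n$ in the sparsity threshold. In the Bernoulli case of \cite{HKM19}, the quadratic-form large deviation bound has a $(t/\|A\|_{\max})^{1/2}$ large-deviation tail with a matching $\log$ factor, and this is what forces $np \gtrsim \log n$; for $\alpha$-subexponential entries with $\alpha < 2$, a naive application of \eqref{eq:Gotze} (with $p_i=1$) would give $\|\xi\|_{\Psi_\alpha}$-dependence and the wrong, much worse sparsity threshold. Theorem~\ref{thm:sparse_alpha} gives the correct variance proxy $\sum a_{ij}^2 p_i p_j$ in the sub-gaussian part and a $(t/\|A\|_{\max})^{\min\{\alpha/2,1/2\}}$ large-deviation tail with the $p_i$-dependence carried through $\gamma_{1,\infty}$ and $\sigma^2$; balancing the three terms in \eqref{eq:main2} against the target accuracy $\delta$ (or the intermediate $(\log n)^C \Phi$) forces precisely $np \ge (\log n)^{\max\{2, 2/\alpha\}} \cdot (\text{const})$ when $\alpha \in (0,2)$ and $np \ge (\log n)^2 \cdot(\text{const})$ when $\alpha \ge 2$, the latter improved to the optimal $np \gtrsim \log n$ for $\alpha \ge 2$ using the Bennett-type log-gain in Theorem~\ref{thm:quadbetter}. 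Carefully tracking which term dominates in each parameter range, and verifying that the $\Upsilon_i$ deterministic corrections (which involve $\frac1n \sum_k G^{(i)}_{kk} - m$ and are handled by a separate fluctuation-averaging step, or here simply by the crude bound since $\delta$ is an order-one constant) are negligible, is the bulk of the technical work; the spectral-stability input is routine because we are not chasing the optimal $\eta$-dependence, only a fixed-accuracy statement. Full details are deferred to Appendix~\ref{app:local}.
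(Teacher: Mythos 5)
Your plan is structurally the same as the paper's: Schur complement to get $G_{ii}^{-1} = -z - s + Y_i$, conditioned concentration of the quadratic/linear form errors via the new sparse Hanson--Wright inequalities, Ward's identity to control $\|G_0^{(i)}\|_F$ and $\|G_0^{(i)}\|_{1,\infty}$ by $\sqrt{n/\eta}$, a stochastic continuity argument down a lattice of $\eta$ values, and a union bound over $i,j$ and the lattice. The paper packages the key estimates in Lemma~\ref{lem:newerror} and the bootstrap in Proposition~\ref{prop:local}, and then observes that for Theorem~\ref{thm:local} only the Bernstein-type Theorems~\ref{thm:sparse_alpha} and~\ref{thm:linear} are needed (the Bennett refinements \ref{thm:linearbetter}, \ref{thm:quadbetter} enter only in Theorem~\ref{thm:local-new} for $\alpha\ge 2$). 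So the high-level approach matches.

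However, there is one concrete error in your bookkeeping of the sparsity parameter that, if taken literally, breaks the crucial large-deviation estimate. You write that after the $1/\sqrt{np}$ normalization the concentration inequalities ``apply with $p_k = p/(np) = 1/n$.'' But rescaling a random variable never changes its sparsity parameter: $H_{ik} = \tfrac{1}{\sqrt{np}}\widetilde H_{ik}$ with $\widetilde H_{ik} = x_{ik}\xi_{ik}$, $x_{ik}\sim\mathrm{Ber}(p)$, is a sparse $\alpha$-subexponential variable with sparsity $p$ and effective constant $K/\sqrt{np}$, not sparsity $1/n$ with constant $K$. The distinction matters: with sparsity $p$ and constant $K/\sqrt{np}$ the $\|A\|_{\max}$-term in the quadratic bound is $\frac{C_\alpha K^2}{np}\,r^{\max\{2/\alpha,2\}}\|G_0^{(i)}\|_{\max}$, which becomes $o(1)$ precisely when $np\gtrsim (\log n)^{\max\{2,2/\alpha\}}$ — the stated threshold; whereas taking sparsity $1/n$ with constant $K$ (technically a valid but weaker moment bound for $r\ge 2$) gives $C_\alpha K^2 r^{\max\{2/\alpha,2\}}\|G_0^{(i)}\|_{\max}\approx (\log n)^{\max\{2/\alpha,2\}}$, which does not vanish and gives no information. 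The paper avoids this trap by applying the inequalities to the \emph{unnormalized} entries $\widetilde H_{ik}$ (sparsity $p$) and pulling out the $1/(np)$ or $1/\sqrt{np}$ factors explicitly; see the displays in the proof of Lemma~\ref{lem:newerror}. Your later inequality ``$\|A\|_{\max}\lesssim \Phi/(np)$'' suggests you silently moved the normalization into $A$ — consistent with $a_{kl}=G^{(i)}_{kl}/(np)$, sparsity $p$ for $\widetilde H_{ik}$ — but that is incompatible with the preceding ``$a_{kl}=G^{(i)}_{kl}$, $p_k=1/n$.'' You should settle on the former convention throughout; otherwise the argument collapses at its central step.
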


A standard consequence of the local law is the semicircle law on small scales, as given below. Recall that $\mu_n$ is the ESD of $H$.
\begin{theorem}[Semicircle law on small scales]\label{thm:localsmall} Under the assumption of Theorem \ref{thm:local}, for any $D>0$ and interval $I$ satisfying $|I| \ge L C_\alpha^2 K^4 \frac{\log^2 n}{n}$, we have
\[ \Prob\left( |\mu_n(I) - \rho_{sc}(I)| \le \delta |I| \right) \ge 1- n^{-D}.\]
\end{theorem}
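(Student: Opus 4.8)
The plan is to derive Theorem~\ref{thm:localsmall} from the local law Theorem~\ref{thm:local} by the standard Stieltjes-transform argument, which is entirely deterministic once one conditions on the event where the local law holds. First I would set up the empirical Stieltjes transform $s_n(z) := \frac{1}{n}\sum_{i=1}^n \frac{1}{\lambda_i(H)-z} = \frac{1}{n}\Tr G(z)$ and observe that on the event $\{\max_{i,j}|G_{ij}(z)-m(z)\delta_{ij}|\le \delta\}$ from \eqref{eq:entryG}, averaging the diagonal entries gives $|s_n(z)-m(z)|\le \delta$ for the corresponding $z=E+\ii\eta\in\mathbf S$. The subtlety is that \eqref{eq:entryG} is stated for a single $z$; to control $s_n$ on a whole horizontal line (or a suitable net of $E$-values) I would take a union bound over an $O(n^{C})$-net in $E$, absorbing the polynomial factor into the choice of $D$ (replace $D$ by $D+C$), and use the Lipschitz continuity of both $s_n(z)$ and $m(z)$ in $E$ at fixed $\eta$ — each has derivative bounded by $\eta^{-2}\le n^2$ — to pass from the net to all $E$ with $|E|\le 10$. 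This yields, with probability $\ge 1-n^{-D}$, that $|s_n(E+\ii\eta)-m(E+\ii\eta)|\le \delta'$ simultaneously for all such $E$ and the fixed scale $\eta \asymp L C_\alpha^2 K^4 (\log^2 n)/n$.

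Next I would convert the Stieltjes-transform control into control of $\mu_n(I)$. The standard device (see e.g.\ the survey \cite{BGK16} or \cite{EKYY13}) is the Helffer--Sj\"ostrand formula, or more elementarily the observation that for any probability measure $\mu$ and any interval $I$ of length $|I|=:\ell \ge \eta$,
\[
\mu(I) \le C\eta \int_{I'} \Im s_\mu(E+\ii\eta)\, dE + (\text{error controlled by }\eta/\ell),
\]
with a matching lower bound, where $s_\mu$ is the Stieltjes transform of $\mu$, $I'$ a slight enlargement of $I$, and similarly for $\rho_{sc}$. Applying this to $\mu_n$ and to $\rho_{sc}$ and subtracting, the bulk term is bounded by $C\eta\int_{I'} |\Im(s_n - m)(E+\ii\eta)|\,dE \le C\eta \cdot |I'|\cdot \delta' \le C\ell\delta'$, using the uniform bound from the previous paragraph; the smoothing errors are $\lesssim \eta \lesssim \ell \cdot (\eta/\ell)$, which is $\le \delta'' \ell$ once $\eta/\ell$ is small, guaranteed by the hypothesis $|I|\ge L C_\alpha^2 K^4(\log^2 n)/n$ together with a large enough choice of the constant $L$. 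Tracking constants, choosing $\delta'$, the net parameter, and $L$ appropriately so that the total error is $\le \delta|I|$, and using that $\rho_{sc}$ is itself a bounded density (so $\rho_{sc}(I')-\rho_{sc}(I)\le C\eta \le C(\eta/\ell)\ell$) gives $|\mu_n(I)-\rho_{sc}(I)|\le \delta|I|$ on the good event.

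The main obstacle is not conceptual but bookkeeping: making the net union bound and the Lipschitz step compatible with the exact form of the hypothesis on $\eta$ and $|I|$, and ensuring the absolute constants (the $L$, the implied constants in Helffer--Sj\"ostrand smoothing, the enlargement of $I$) can all be chosen uniformly so the final error is genuinely bounded by $\delta|I|$ with probability $\ge 1-n^{-D}$. One must also be slightly careful that the local law Theorem~\ref{thm:local} is applied only at $z$ with $\eta\ge L C_\alpha^2 K^4 \log^2 n/n$, so the smoothing scale $\eta$ must be taken at (a constant multiple of) this threshold and never smaller — which is exactly why the interval length is required to be at least of this order. I would carry out the argument at the fixed scale $\eta_0 := L C_\alpha^2 K^4 (\log^2 n)/n$ throughout. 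Since all of this is standard once the local law is in hand, I would present it compactly, citing \cite{EKYY13, BGK16} for the Stieltjes-transform-to-counting-measure reduction and giving details only for the net/Lipschitz union bound that is specific to the single-$z$ formulation of Theorem~\ref{thm:local}.
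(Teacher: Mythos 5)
Your proposal is correct and takes essentially the same route as the paper: the paper omits the proof entirely, remarking only that the result follows by the argument of Section 8 of \cite{BGK16} or Lemma 64 of \cite{TV11} applied to Theorem~\ref{thm:local}, and your sketch — local law at a single scale $\eta$, net/Lipschitz union bound in $E$, then the standard Stieltjes-transform-to-counting-measure (Helffer--Sj\"ostrand) conversion — is precisely that argument.
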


The proof of Theorem \ref{thm:localsmall} can be derived by either following the proof detailed in Section 8 from \cite{BGK16} or applying Lemma 64 from \cite{TV11} together with Theorem \ref{thm:local}. We omit the details. 

Another consequence of the local law is the complete delocalization of the (normalized) eigenvectors $\{u_i\}_{i=1}^n$ of $H$.  Note that compared to Theorem ~\ref{thm:local}, we require a slightly stronger lower bound on $p$ due to a technical condition on the spectral norm concentration of $H$ (see Lemma~\ref{lem:bdH}).

\begin{theorem}[Eigenvector delocalization for sparse Wigner matrix with $\alpha$-subexponential entries]\label{thm:delo} Consider the matrix $H$ in \eqref{eq:H} under Assumption~\ref{assumption:tildeH}. For any $D>0$, there exist $L=L(D)$ and $n_0 = n_0(D)$ such that for $p \ge L C_\alpha^2 K^4 \frac{(\log n)^{\max\{ 1+\frac{2}{\alpha},2 \}} }{n} $ and $n \ge n_0$, we have
\[\Prob\left(\forall i : \|u_i\|_{\infty} \le \sqrt{2L} C_{\alpha} K^2\frac{\log n}{\sqrt{n}} \right) \ge 1-2n^{-D}. \]
Here, $C_\alpha\ge 1$ is an absolute constant depending only on $\alpha$.
\end{theorem}

The study of the complete eigenvector delocalization has a rich history (see survey \cite{OVW16}). For the adjacency matrix of the Erd{\H o}s-R{\'e}nyi random graph $G(n,p)$, \cite{HKM19} showed that all eigenvectors satisfy $\|u_i\|_{\infty} \le \frac{n^{(\log n)^{-1/2}}}{\sqrt n}$ with high probability when $p \ge C\frac{\log n}{n}$, improving upon the bounds established in \cite{TVW13}. The conjectured optimal bound is $\|u_i\|_{\infty} \lesssim \frac{\sqrt{\log n}}{\sqrt{n}}$, which has been proven for bulk eigenvectors of Wigner matrices with bounded entries \cite{VW15}. For all eigenvectors, the optimal delocalization for Wigner matrices with subexponential entries was established in \cite{benigni2022optimal}. Our bound in Theorem \ref{thm:delo} appears to be $\sqrt{\log n}$ away from this optimal bound in the dense case, yet represents the current best result for Wigner matrices with sparse $\alpha$-subexponential entries. The proof of Theorem \ref{thm:delo} follows by combining two key ingredients: the local law and an estimate of the spectral norm of $H$. The complete details are presented in Appendix \ref{app:delo}.

The conclusion of Theorem \ref{thm:delo} can be extended to non-first eigenvectors of the adjacency matrix of the Erd{\H o}s-R\'enyi random graph $G(n,p)$ with minor modifications to the proof, following the approach of \cite{HKM19}. When $\alpha=\infty$, our result improves upon \cite{HKM19} by tightening the upper bound on the infinity norms from $\frac{n^{(\log n)^{-1/2}}}{\sqrt n}$ to $\frac{\log n}{\sqrt{n}}$. However, Theorem \ref{thm:delo} requires a more restrictive condition of $p\geq \frac{C\log^2 n}{n}$, compared to their super-critical regime of $p\geq \frac{C\log n}{n}$.

In the proof of Theorem \ref{thm:delo}, we have adapted Theorem \ref{thm:linear} and Theorem \ref{thm:sparse_alpha}. By incorporating the improved bounds from Theorem \ref{thm:linearbetter} and Theorem \ref{thm:quadbetter}, we can establish results analogous to \cite{HKM19} that hold up to the super-critical regime $p\gtrsim \frac{\log n}{n}$. Moreover, we demonstrate that these results extend to random matrices with sparse $\alpha$-subexponential entries, where $\alpha \in [2, \infty]$, in the super-critical regime. Note that $\beta = 1-\frac{1}{\alpha} \in [\frac{1}{2},1].$ And when $\alpha\geq 2$, $H$ is a sparse sub-Gaussian random matrix.

\begin{theorem}[Eigenvector delocalization for sparse sub-Gaussian random matrices]\label{thm:delo-new} Consider the matrix $H$ in \eqref{eq:H} with a parameter $\alpha\in [2,\infty]$. For any $D>0$ and  $\varepsilon \in (0,1)$, there exist $L=L(D,\varepsilon)$ and $n_0=n_0(D, \alpha,K,\varepsilon)$ such that for $p\ge LC_\alpha^2 K^4 \frac{\log n}{n}$ and $n\ge n_0$, we have 
\[\Prob\left(\forall i : \|u_i\|_{\infty} \le n^{-\frac{1}{2}+\varepsilon(\log n)^{-1+\frac{1}{2\beta}}} \right) \ge 1-2n^{-D}. \]
\end{theorem}
To the best of our knowledge, this is the first complete eigenvector delocalization result for sparse sub-gaussian random matrices in the supercritical regime $p\geq \frac{C\log n}{n}$ beyond the bounded random variable case \cite{TVW13,EKYY13,HKM19,DZ19}. A local law for $H$ when $\alpha\geq 2$ is also proved and we provide the detailed statement in Theorem~\ref{thm:local-new}.
Sparse Wigner matrices correspond to sparse random graphs with random weights, which has gained significant interest recently in the large deviation theory of random matrices  \cite{ganguly2022large,ganguly2024spectral,augeri2023large,augeri2025large}.
For $\alpha=\infty$, corresponding to sparse bounded entries in $H$, the upper bound $\|u_i\|_{\infty} \le n^{-\frac{1}{2}+(\log n)^{-\frac{1}{2}}}$ aligns with the result in \cite{HKM19}. The proof of Theorem \ref{thm:delo-new} follows the same approach as Theorem \ref{thm:delo}, but incorporates the improved concentration inequalities for $\alpha\ge 2$, namely Theorem \ref{thm:linearbetter} and Theorem \ref{thm:quadbetter}.
A sketch of this proof is provided in Appendix \ref{app:delo-new}.

\begin{remark}
The results presented in this section can be extended in several directions. First, the matrix $H$ can be complex-valued, requiring only cosmetic modifications to our proofs. Moreover, the diagonal entries can have a common variance that differs from the variance of the off-diagonal entries. A more substantial generalization involves sparse Wigner-type matrices with heterogeneous sparsity parameters $p_{ij}$, as studied in \cite{DZ19}. Such extensions can be achieved using the quadratic vector equation developed in \cite{AEK19}. While these generalizations are natural and tractable, we do not pursue their details in this work.
\end{remark}

\subsection{Concentration of the Euclidean norm of sparse random vectors}

Another application of our sparse Hanson-Wright inequality concerns the estimation of the Euclidean norm of random vectors, an application previously considered in \cite{RV13, gotze2021concentration} in the non-sparse setting. For a sparse $\alpha$-subexponential random vector $X$ and a deterministic matrix $B$, we establish the following concentration result for the squared norm $\|BX\|^2$ as a corollary of Theorem~\ref{thm:sparse_alpha}.

\begin{theorem}
    \label{thm:vec_norm}
    Consider $X=(X_1, \dots, X_n)^T$ a random vector with independent centered sparse $\alpha$-subexponential components, where each $X_i$ satisfies $\E(|X_i|^r) \le p_i (K r^{\frac{1}{\alpha}})^r$ for all $r\ge 1$ and some $\alpha \in (0,1]$. Let $B\in \R^{m\times n}$ be a deterministic matrix with columns $B_1,\dots, B_n$. For any $s>0$, with probability at least $1-C'\exp(-cs)$, the following holds:
    \begin{align}
        \left|\|BX\|^2 - p \|B\|_F^2 \right|\le &\sqrt{s}  K^2 \sqrt{p^2\|B^T B\|_F^2 + p\sum_{i}\|B_i\|^2}\\
        &+ s K^2 \max_{1\leq i\leq n}\Big\{ p\sum_{j\neq i}|\langle B_i,B_j \rangle|,\|B_i\|^2 \Big\} + s^{\frac{2}{\alpha}}  K^2 \|B^T\|_{2,\infty}^2. \label{eq:vec_norm}
    \end{align}
\end{theorem}
Note that in the above result, $\|B^T\|_{2,\infty}=\max_i \|B_i\|$ denotes the maximum Euclidean norm among all columns of $B$. Theorem \ref{thm:vec_norm} is a direct consequence of Theorem \ref{thm:sparse_alpha}. For $\alpha>1$, analogous results can be derived using Theorem \ref{thm:quadbetter}; we omit the details here. 

Let us compare our result with the existing bound in \cite{gotze2021concentration} for the special case $\alpha=1$, although both their result and ours apply to $\alpha\in(0,1]$. Consider a sparse subexponential random vector $X$ with i.i.d. entries $X_i=x_i\xi_i$ where all $x_i,\xi_i$ are independent, $x_i\sim\mathrm{Ber}(p)$ and $\xi_i$ is a mean-zero, unit-variance random variable with $\|\xi_i\|_{L_r}\leq Kr$ for all $r\ge 1$. These conditions imply $\|X_i\|_{\Psi_1}\leq CK$ for a universal constant $C>0$. According to \cite[Corollary 2.2]{gotze2021concentration}, with probability at least $1-C'\exp(-cs)$, 
\begin{align}
    \left|\|BX\|^2 - p \|B\|_F^2 \right|\le & \sqrt{s} K^2  \|B^TB\|_F + s K^2 \|B\|^2\\
     &+ s^{3/2} K^2\max_{i} \sqrt{\sum_{j} \langle B_i, B_j\rangle^2 } + s^2K^2   \|B^T\|_{2,\infty}^2. \label{eq:Gotze_norm}
\end{align}
Compared to \eqref{eq:Gotze_norm}, our bound \eqref{eq:vec_norm} does not have the $s^{3/2}$ term, while the $s^2$ terms in both bounds are identical. For small $p$, the $\sqrt{s}$ term in \eqref{eq:vec_norm} improves upon the corresponding term in \eqref{eq:Gotze_norm} since $\sqrt{\sum_{i}\|B_i\|^2} \leq \|B^TB\|_F$. When $p$ is sufficiently small, the $s$ term in \eqref{eq:vec_norm} also improves the corresponding term in \eqref{eq:Gotze_norm} due to the fact that $\max_{i} \|B_i\| \leq \|B\|$.

\section{Generalization to general matrices and non-centered random vectors}\label{sec:generalization}
Finally, we present two generalizations that combine the tail bounds for linear and quadratic forms. The first generalization is the Hanson-Wright inequality for a general matrix $A$. The second generalization is the Hanson-Wright inequality for sparse random variables with a diagonal-free matrix $A$, where we relax the assumption of centered random variables. For brevity, we only consider the special case of $\alpha=1$, though these results can be extended for arbitrary $\alpha>0$ using the  results we present in Section~\ref{sec:intro}. 

\begin{coro} \label{cor:main}
Let $X=(X_1,\cdots,X_n)^T$ be a random vector whose components are independent centered sparse subexponential random variables, i.e., $\E(|X_i|^r) \le p_i (K r)^r$ for all $r\ge 1$.  Let $A$ be an $n\times n$ matrix.  Then for any $t>0$,
\begin{align*}
 &\Prob\left( \left|X^T A X - \E(X^T A X) \right|\ge t\right) \\
 \le & e^2\exp\left(-C\min\left\{\frac{t^2}{K^4 (\sum_{i}a_{ii}^2p_i + \sum_{i\neq j}a_{ij}^2p_ip_j)} , \frac{t}{K^2 \gamma_{1,\infty}},\left(\frac{t}{K^2 \|A\|_{\max}}\right)^{\frac{1}{2}} \right\} \right),
\end{align*}
where 
\begin{align}\label{eq:def_gamma_A}
\gamma_{1,\infty}:=\max_i\left\{\sum_{j\neq i} |a_{ij}|p_j, \sum_{j\neq i} |a_{ji}|p_j, |a_{ii}|\right\}.
\end{align}
\end{coro}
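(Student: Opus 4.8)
The plan is to split $A$ into its diagonal and off-diagonal parts and apply one of the two main theorems to each. Write $A=\mathrm{diag}(a_{11},\dots,a_{nn})+A'$, where $A'=(a_{ij}\mathbf{1}(i\neq j))_{i,j}$ is diagonal-free. Since the $X_i$ are independent and centered, $\E(X^TAX)=\sum_i a_{ii}\E(X_i^2)$, so
\[
X^TAX-\E(X^TAX)=Z_{\mathrm{diag}}+Z_{\mathrm{off}},\qquad Z_{\mathrm{diag}}:=\sum_{i=1}^n a_{ii}(X_i^2-\E(X_i^2)),\quad Z_{\mathrm{off}}:=\sum_{i\neq j}a_{ij}X_iX_j .
\]
It then suffices to bound $\Prob(|Z_{\mathrm{diag}}|\ge t/2)$ and $\Prob(|Z_{\mathrm{off}}|\ge t/2)$ separately and finish with a union bound.

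For $Z_{\mathrm{off}}$ I apply Theorem~\ref{thm:sparse_alpha} with $\alpha=1$ to the diagonal-free matrix $A'$. Since $\min\{\alpha/2,1/2\}=1/2$ and $\|A'\|_{\max}\le\|A\|_{\max}$, this yields $\Prob(|Z_{\mathrm{off}}|\ge t/2)\le e^2\exp(-c\min\{t^2/(K^4\sum_{i\neq j}a_{ij}^2p_ip_j),\,t/(K^2\gamma'_{1,\infty}),\,(t/(K^2\|A\|_{\max}))^{1/2}\})$, where $\gamma'_{1,\infty}=\max_i\{\sum_{j\neq i}|a_{ij}|p_j,\ \sum_{j\neq i}|a_{ji}|p_j\}$.

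The only genuinely new ingredient is $Z_{\mathrm{diag}}$, and the key point is that the centered squares $Y_i:=X_i^2-\E(X_i^2)$ are sparse $\frac12$-subexponential random variables. Indeed, by the triangle inequality in $L_r$, $\|X_i^2\|_{L_r}=\|X_i\|_{L_{2r}}^2\le p_i^{1/r}(2Kr)^2$, and $\E(X_i^2)\le 4K^2p_i\le 4K^2p_i^{1/r}$ using $p_i\le 1$ and $r\ge1$; hence $\|Y_i\|_{L_r}\le 8K^2r^2p_i^{1/r}$, i.e.\ $\E(|Y_i|^r)\le p_i(8K^2 r^{2})^{r}$ for every $r\ge1$. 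The $Y_i$ are independent and centered, so Theorem~\ref{thm:linear} applied with exponent $\frac12$ (so $\min\{\alpha,1\}=\frac12$) and parameter $\asymp K^2$ to the vector $(a_{11},\dots,a_{nn})$ gives, after absorbing the numerical constants from the $8K^2$ factor into $c$, the estimate $\Prob(|Z_{\mathrm{diag}}|\ge t/2)\le e^2\exp(-c\min\{t^2/(K^4\sum_i a_{ii}^2p_i),\,(t/(K^2\max_i|a_{ii}|))^{1/2}\})$.

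Finally I combine the two bounds. Using $e^{-a}+e^{-b}\le 2e^{-\min(a,b)}$, the two sub-Gaussian terms merge via $\min\{u^{-1},v^{-1}\}\ge (u+v)^{-1}$ into the variance proxy $K^4(\sum_i a_{ii}^2p_i+\sum_{i\neq j}a_{ij}^2p_ip_j)$; the diagonal tail $(t/(K^2\max_i|a_{ii}|))^{1/2}$ is dominated by $(t/(K^2\|A\|_{\max}))^{1/2}$ because $\max_i|a_{ii}|\le\|A\|_{\max}$, hence drops out of the minimum; and both $\gamma'_{1,\infty}$ and $\max_i|a_{ii}|$ are at most the $\gamma_{1,\infty}$ of \eqref{eq:def_gamma_A}, so replacing $\gamma'_{1,\infty}$ by $\gamma_{1,\infty}$ only weakens the bound. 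The prefactor $2e^2$ is reduced to $e^2$ by slightly shrinking the absolute constant (when the exponent is too small for this, the right-hand side already exceeds $1$). I do not expect a real obstacle: the proof is a decomposition followed by two applications of results already proved, and the only place needing care is the bookkeeping of the powers of $K$ in $Z_{\mathrm{diag}}$ — squaring a sparse subexponential variable turns $K$ into $K^2$ and halves the subexponential exponent, which is exactly what produces the $K^4$ in the variance term and the $K^2$ in the square-root $\|A\|_{\max}$ tail. (For general $\alpha$ one proceeds the same way, using that $X_i^2$ is $\frac{\alpha}{2}$-subexponential and invoking Theorem~\ref{thm:linear} or Theorem~\ref{thm:quadbetter} as appropriate.)
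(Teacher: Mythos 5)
Your proof is correct and follows essentially the same route as the paper's: decompose into the diagonal sum $\sum_i a_{ii}(X_i^2-\E X_i^2)$ and off-diagonal sum $\sum_{i\neq j}a_{ij}X_iX_j$, bound the off-diagonal part by Theorem~\ref{thm:sparse_alpha} with $\alpha=1$, and bound the diagonal part by Theorem~\ref{thm:linear} applied to the centered squares, which are sparse $\tfrac12$-subexponential with Orlicz parameter of order $K^2$. The only cosmetic difference is how the moments of $X_i^2-\E(X_i^2)$ are estimated: the paper uses the pointwise inequality $\E|Z-\E Z|^l\le 2^l\E Z^l$ for nonnegative $Z$, while you use the $L_r$-triangle inequality together with $\E(X_i^2)\le 4K^2p_i\le 4K^2 p_i^{1/r}$; both give $\E|X_i^2-\E X_i^2|^r\le p_i(8K^2 r^2)^r$ and hence the same diagonal tail bound. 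The combination step (absorbing the $t/2$ splitting and the $2e^2$ prefactor into the absolute constant, merging the two sub-Gaussian exponents via $\min\{1/u,1/v\}\ge 1/(u+v)$, and weakening $\gamma'_{1,\infty}$ to the larger $\gamma_{1,\infty}$ of \eqref{eq:def_gamma_A}) is exactly what the paper does implicitly.
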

Note that the definition of $\gamma_{1,\infty}$ in \eqref{eq:def_gamma_A} is consistent with the diagonal-free case in  \eqref{eq:def_gamma_free}. 
Next, we provide the Hanson-Wright inequality for non-centered random variables. 
\begin{coro}\label{coro:general-HW}
Let $Y=(Y_1,\ldots,Y_n)^T$ be a random vector with independent sparse subexponential entries, where each $Y_i$ satisfies $\E(|Y_i|^r) \le p_i (K r)^r$ for all $r\ge 1$. Let $A$ be an $n\times n$ diagonal-free matrix. Then there exists an absolute constant $C>0$ such that for any $t>0$,
\begin{align}
 &\Prob\left( \left|Y^T A Y - \E(Y^T A Y)\right| \ge t \right)\\
 & \le e^2\exp\left(-C\min\left\{\frac{t^2}{K^4 \sum_{i\neq j} a_{ij}^2 p_ip_j+K^2\sum_{i} b_i^2 p_i} , \frac{t}{K^2 \gamma_{1,\infty}},\left(\frac{t}{K^2 \|A\|_{\max}}\right)^{\frac{1}{2}} \right\} \right),
\end{align}
where $b_i :=\max\{|\sum_{j\neq i} a_{ij} \E(Y_j)|,|\sum_{j\neq i} a_{ji} \E(Y_j)|\}$ and $\gamma_{1,\infty}$ is defined in \eqref{eq:def_gamma_free}.
\end{coro}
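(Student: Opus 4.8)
The plan is to reduce the non-centered quadratic form to a centered one plus a linear correction, and then apply the existing Hanson-Wright bound (Theorem~\ref{thm:sparse_alpha}) together with the Bernstein-type linear bound (Theorem~\ref{thm:linear}). Write $X_i := Y_i - \E(Y_i)$, so $X = (X_1,\ldots,X_n)^T$ has independent centered components. The key point is that each $X_i$ is still a sparse subexponential random variable with essentially the same parameters: since $|\E(Y_i)| \le \|Y_i\|_{L_1} \le p_i K$, the triangle inequality in $L_r$ gives $\|X_i\|_{L_r} \le \|Y_i\|_{L_r} + |\E(Y_i)| \le p_i^{1/r} K r + p_i K$, and for $r \ge 1$ this is at most $C p_i^{1/r} K r$ (using $p_i^{1/r} \ge p_i$), hence $\E(|X_i|^r) \le p_i (C K r)^r$. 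So $X$ satisfies the hypotheses of Theorems~\ref{thm:sparse_alpha} and \ref{thm:linear} with $K$ replaced by an absolute multiple of $K$.

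Next I would expand the quadratic form. Substituting $Y_i = X_i + \mu_i$ with $\mu_i := \E(Y_i)$ and using that $A$ is diagonal-free,
\[
Y^T A Y = X^T A X + \sum_{i\neq j} a_{ij}(X_i \mu_j + \mu_i X_j) + \sum_{i\neq j} a_{ij}\mu_i\mu_j.
\]
Taking expectations (and using $\E(X_i)=0$) gives $\E(Y^T A Y) = \E(X^T A X) + \sum_{i\neq j}a_{ij}\mu_i\mu_j$, so that
\[
Y^T A Y - \E(Y^T A Y) = \big(X^T A X - \E(X^T A X)\big) + \sum_{i=1}^n c_i X_i,\qquad c_i := \sum_{j\neq i}(a_{ij}\mu_j + a_{ji}\mu_j).
\]
Thus the deviation splits into a centered quadratic part and a linear part in the $X_i$'s. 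By a union bound, $\Prob(|Y^TAY - \E(Y^TAY)| \ge t) \le \Prob(|X^TAX - \E(X^TAX)| \ge t/2) + \Prob(|\sum_i c_i X_i| \ge t/2)$.

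For the first term, apply Theorem~\ref{thm:sparse_alpha} (with $\alpha=1$): this yields exactly the three desired terms $\exp(-C\min\{t^2/(K^4\sum_{i\neq j}a_{ij}^2 p_ip_j),\, t/(K^2\gamma_{1,\infty}),\, (t/(K^2\|A\|_{\max}))^{1/2}\})$, since for $\alpha=1$ the exponent $\min\{\alpha/2,1/2\}$ equals $1/2$. For the second term, apply Theorem~\ref{thm:linear} to the vector $c = (c_i)$: it gives $\exp(-C\min\{t^2/(K^2\sum_i c_i^2 p_i),\, t/(K\|c\|_\infty)\})$. Now $|c_i| \le |\sum_{j\neq i}a_{ij}\mu_j| + |\sum_{j\neq i}a_{ji}\mu_j| \le 2 b_i$, so $\sum_i c_i^2 p_i \le 4\sum_i b_i^2 p_i$ and the sub-gaussian part of the linear bound is absorbed into the first term of the claimed bound. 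For the subexponential part, $\|c\|_\infty \le 2\max_i b_i$; since $b_i \le \|A\|_{1,\infty}\max_j|\mu_j| \lesssim K\gamma_{1,\infty}$ (crudely $|\mu_j| \le p_j K \le K$, so in fact $b_i \le K\sum_{j\neq i}(|a_{ij}|+|a_{ji}|)p_j \cdot$ appropriately; more precisely $b_i \le 2K\gamma_{1,\infty}$ after pulling out the $p_j$ weight), the term $t/(K\|c\|_\infty)$ is bounded below by a constant times $t/(K^2\gamma_{1,\infty})$, which again is already present. Combining the two bounds and adjusting the absolute constant $C$ proves the corollary, with the $e^2$ prefactor carried through from the two applications (the union-bound factor of $2$ is absorbed since $2e^2 \le e^2 \cdot e$, or one simply keeps an overall prefactor bounded by $e^2 e$ and notes $e^3$ can be rewritten — cleaner is to absorb the $2$ into the constant $C$ at the cost of shrinking it, which is harmless).

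The main obstacle is purely bookkeeping: verifying that every term produced by the linear correction $\sum c_i X_i$ is dominated by a term already appearing in the target bound, so that no new term is introduced. This requires the crude but correct estimates $|\mu_j| \le p_j K$ and $b_i \le 2K\gamma_{1,\infty}$, the latter being where the definition of $\gamma_{1,\infty}$ as a $p$-weighted row/column sum is used. One should also double-check that replacing $K$ by $CK$ in the hypotheses for $X$ only changes absolute constants and does not affect the structure of the bound, which is immediate since $K$ enters the bound only through fixed powers.
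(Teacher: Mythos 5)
Your proposal is correct and follows essentially the same route as the paper: center $Y_i = X_i + \mu_i$, check that $X_i$ remains sparse subexponential with comparable $K$, split the deviation into a diagonal-free quadratic form $X^TAX$ plus a linear term $\sum_i c_i X_i$, and apply Theorem~\ref{thm:sparse_alpha} and Theorem~\ref{thm:linear} respectively, using $|\mu_j|\le p_jK$ to get $|c_i|\lesssim K\gamma_{1,\infty}$ so the linear subexponential tail is absorbed. The only cosmetic difference is that you re-derive the moment bound on $X_i$ via the $L_r$ triangle inequality (the paper uses the shorter $\|X_i\|_{L_r}\le 2\|Y_i\|_{L_r}$) and you briefly pass through a non-tight bound $b_i\le\|A\|_{1,\infty}\max_j|\mu_j|$ before correcting to the $p$-weighted estimate $b_i\le K\gamma_{1,\infty}$, which is the one actually needed.
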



\section{Proofs of Theorem \ref{thm:sparse_alpha} and Theorem~\ref{thm:linear}}\label{sec:proofmain}

The key input of the proof of Theorem \ref{thm:sparse_alpha} is the following moment estimates, whose proof represents the primary technical contribution of this work.

\begin{lemma}\label{lem:momentHW}Under the assumptions of Theorem \ref{thm:sparse_alpha},
    let $r$ be any even integer. We have 
\begin{align}\label{eq:momentbd}
\|X^T A X\|_{L_r} \le C_\alpha K^2\max\left\{r^{\max\{2,\frac{2}{\alpha}\}}\|A\|_{\max}, r\gamma_{1,\infty},  \sqrt{r}\sigma\right\},
\end{align}
where $\gamma_{1,\infty}$ is defined in \eqref{eq:def_gamma_A} and $\sigma:=\sqrt{\sum_{i\neq j} a_{ij}^2 p_ip_j}.$ Here, $C_\alpha=48 e^{5+2/e}\cdot 8^{\max\{1,1/\alpha\}}$.
\end{lemma}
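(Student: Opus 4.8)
\textbf{Proof proposal for Lemma \ref{lem:momentHW}.}

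The plan is to estimate $\E(X^TAX)^r = \sum a_{i_1 j_1}\cdots a_{i_r j_r}\,\E(X_{i_1}X_{j_1}\cdots X_{i_r}X_{j_r})$ by a combinatorial expansion over the multigraphs formed by the index pairs. Since $A$ is diagonal-free, each factor $a_{i_\ell j_\ell}$ with $i_\ell\neq j_\ell$ contributes an edge to a multigraph on the vertex set of distinct indices appearing among $i_1,j_1,\dots,i_r,j_r$. Because the $X_i$ are independent and centered, a term survives only if every vertex has degree $\ge 2$ in the multigraph, i.e. no index appears exactly once. The central object is therefore the set of such "even-ish" configurations: pick a set $V$ of $m$ distinct vertices, a partition of the $2r$ slots into groups of size $\ge 2$ landing on these vertices (so that each $X_i$ appears $d_i\ge 2$ times), and sum the resulting products of $|a_{ij}|$ weighted by the moment bounds $\E(|X_i|^{d_i})\le p_i(Kd_i^{1/\alpha})^{d_i}$.

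First I would set up the bookkeeping: group the $2r$ half-edges by their vertices, write the contribution of a fixed multigraph $G$ (on vertex set $V$, with degree sequence $(d_i)_{i\in V}$, $\sum d_i = 2r$, all $d_i\ge 2$) as
\[
\prod_{i\in V} p_i\,(Kd_i^{1/\alpha})^{d_i}\ \times\ \sum_{\text{labelings}}\ \prod_{\ell=1}^r |a_{i_\ell j_\ell}|,
\]
and then bound the inner sum over index assignments edge-by-edge. The key estimates to plug in are: (i) $\sum_j |a_{ij}| p_j \le \gamma_{1,\infty}$ and its transpose version, used to "peel off" a pendant-type edge or walk through a vertex while collecting a factor of $\gamma_{1,\infty}$ and the sparsity factor $p_j$; (ii) $|a_{ij}|\le \|A\|_{\max}$, used for the "excess" edges beyond a spanning structure; (iii) $\sum_{i\neq j} a_{ij}^2 p_i p_j \le \sigma^2$, used to account for doubled edges, which produce the $\sqrt r\,\sigma$ term. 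The standard device (as in \cite{HKM19,schudy2011bernstein}) is to choose a spanning forest of $G$, traverse it so that each non-root vertex is reached through a tree edge — contributing a $\gamma_{1,\infty}$ factor and the $p$ factor needed to match the $p_i$ in the moment bound — while the remaining $r - (m - \#\text{components})$ edges are bounded by $\|A\|_{\max}$, and pairs of parallel tree edges get upgraded to $\sigma$ via Cauchy–Schwarz. One then counts: the number of multigraphs with $m$ vertices and the number of ways to assign the $2r$ slots is at most $(Cr)^{2r - \text{something}} m^{O(r-m)}$ type bounds, and the degree factors $\prod d_i^{d_i/\alpha}$ are controlled using $\sum d_i = 2r$ and convexity, giving $\le (2r)^{2r/\alpha}$ when $\alpha\le 1$ and $\le (Cr)^{r\cdot(\text{const})}$-type control when $\alpha>1$ — this is where the exponent $\max\{2,2/\alpha\}$ on $r$ enters (the $r^2$ coming from combinatorial labeling counts when $\alpha\ge 1$, the $r^{2/\alpha}$ from the moment growth when $\alpha<1$).

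The main obstacle is the triple optimization: for each configuration one must decide how many vertices $m$ to use and how to split the $r$ edges among "tree" edges (costing $\gamma_{1,\infty}$), "doubled" edges (costing $\sigma$, i.e. $\sqrt{p_ip_j}$ per edge via Cauchy–Schwarz) and "excess" edges (costing $\|A\|_{\max}$), and then sum the resulting bound $(\text{combinatorial count})\cdot K^{2r}\cdot (r^{\#1}\gamma_{1,\infty})^{a}(\sqrt r\sigma)^{b}(r^{\#2}\|A\|_{\max})^{c}$ over all $(a,b,c)$ with $a+b+c$ roughly $r$, and show it is dominated by $\big(C_\alpha K^2 \max\{r^{\max\{2,2/\alpha\}}\|A\|_{\max}, r\gamma_{1,\infty}, \sqrt r\sigma\}\big)^r$. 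This requires carefully matching the $p_i$-powers: each vertex of degree $d_i$ "owns" one factor $p_i$ from its moment bound, and this must be distributed — half to each of the (at least one) tree/doubled edges incident to it — so that doubled edges can afford $\sqrt{p_ip_j}$ and tree edges can afford the $p_j$ needed for $\gamma_{1,\infty}$; vertices of degree $\ge 3$ have leftover $p_i\le 1$ which is simply discarded. I would handle the degree-factor growth $\prod_i d_i^{d_i/\alpha}$ and the labeling counts with the inequality $\prod d_i^{d_i} \le (2r)^{2r}$ and Stirling-type bounds, absorbing everything into $C_\alpha$ and the three displayed terms; the explicit constant $C_\alpha = 48 e^{5+2/e}\cdot 8^{\max\{1,1/\alpha\}}$ will fall out of tracking these bounds, but I would not grind through that arithmetic in the sketch. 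After the moment bound is in hand, Theorem \ref{thm:sparse_alpha} follows by the standard Markov-plus-optimize-over-$r$ argument (Chebyshev on $|X^TAX|^r$ for even $r$, then choosing $r$ to balance), handling the three regimes separately.
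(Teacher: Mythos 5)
Your overall frame (expand the $r$-th moment over multigraphs with all degrees $\ge 2$, bound each configuration using $\|A\|_{\max}$, $\gamma_{1,\infty}$ and $\sigma$, then count configurations and optimize) is the same as the paper's, but the core estimate as you describe it has a genuine gap in how each connected component is closed up. You propose to keep only a spanning forest (each non-root vertex peeled via $\sum_j |a_{ij}|p_j\le\gamma_{1,\infty}$), bound \emph{all} remaining edges by $\|A\|_{\max}$, and obtain $\sigma$ only from ``pairs of parallel tree edges'' via Cauchy--Schwarz. Two things go wrong. First, after peeling a tree down to its root, the root's index is still summed freely with its sparsity weight, leaving a factor $\sum_i p_i$ (possibly of order $n$) per component that is not controlled by $\|A\|_{\max}$, $\gamma_{1,\infty}$ or $\sigma$; a forest has no parallel edges, so nothing in your scheme absorbs this. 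Second, the variance term $\sqrt{r}\,\sigma$ must come with one factor $\sigma^2$ \emph{per connected component}, including components that are simple cycles of length $\ge 3$ with no doubled edges at all (e.g.\ a triangle with all degrees $2$); your allocation gives such components no $\sigma$ factor, so even ignoring the dangling root sum you could not recover the correct sub-gaussian term $\sqrt{r}\,\sigma$ in the final maximum.

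The paper's fix, which is the missing idea, is to retain per component a spanning tree \emph{plus one extra edge}, so that after peeling the pendant branches (each peel costing $\gamma_{1,\infty}$) what remains is exactly one cycle per component; the cycle of length $m$ is then estimated in one shot as $\tr(B^{m})\le \|B\|_F^2\,\|B\|^{m-2}\le \sigma^2\,\gamma_{1,\infty}^{m-2}$ with $B=\sqrt{P}A\sqrt{P}$, $P=\mathrm{diag}(p_1,\dots,p_n)$, where $\|B\|_F^2=\sigma^2$ and the operator-norm bound $\|B\|\le\gamma_{1,\infty}$ requires its own short Cauchy--Schwarz argument. This trace estimate simultaneously closes the component (no free root sum survives) and produces the $\sigma^{2t}$ factor, $t$ being the number of components, which is what ultimately yields the bound $\|A\|_{\max}^{r-k}\gamma_{1,\infty}^{k-2t}\sigma^{2t}$ and hence the three-term maximum in the lemma. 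Your counting of partitions and the treatment of the degree factors $\prod_i d_i^{d_i/\alpha}$ are in the right spirit, but without the cycle/trace step the proof does not go through.
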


To enhance the clarity of our presentation, we structure our proofs as follows. In Section \ref{sec:lemma}, we provide the proof of Lemma \ref{lem:momentHW} for the special case where $\alpha=1$. Subsequently, in Section \ref{sec:lemma-alpha}, we outline the necessary modifications to extend the conclusion to the general case of $\alpha \in (0,\infty]$. Finally, in Section \ref{sec:proofmainthm}, we demonstrate that Theorem \ref{thm:sparse_alpha} follows as a direct consequence of Lemma \ref{lem:momentHW}.


\subsection{Proof of Lemma \ref{lem:momentHW} when $\alpha=1$}\label{sec:lemma}

Without loss of generality, we can assume $a_{ij} \ge 0$, as the right-hand side in Lemma~\ref{lem:momentHW} remains the same if we replace every $a_{ij}$ with $|a_{ij}|$.  Moreover, note that \[X^T A X=X^T \frac{1}{2}(A+A^T) X :=  X^T \widetilde A X,\] where $\widetilde A =(\widetilde a_{ij})$ is symmetric and diagonal-free. It is straightforward to verify that $\sum_{i,j}\widetilde a_{ij}^2 p_i p_j \le \sum_{i,j} a_{ij}^2 p_i p_j $, $\max_i \sum_{j\neq i} |\widetilde a_{ij}| p_j \le \gamma_{1,\infty}$, and $\|\widetilde A\|_{\max} \le \|A\|_{\max}$. Therefore, it suffices to assume $A$ is symmetric in the subsequent proof.

\medskip

\noindent{\bf Step 1. (Rewrite the moment).} We start with the expansion
    \begin{align}\label{eq:quadratic}
        \mathbb E \left| \sum_{i,j=1}^n a_{ij} X_i X_j\right|^r=\sum_{i_1,\dots,i_{2r}} a_{i_1i_2}\cdots a_{i_{2r-1}i_{2r}} \E[X_{i_1}\cdots X_{i_{2r}}].
    \end{align}

 Since $X_i$   are centered random variables, for the expectations to be nonzero, every index $i_l$   in the summation on the right-hand side of \eqref{eq:quadratic} must appear at least twice. We introduce the following notation.

\begin{itemize}
    \item Let $\mathcal P(2r)$ be the set of partitions of $[2r]$, and let $\mathcal P_{\geq 2}(2r)$ be the set of partitions of $[2r]$ whose blocks are of size at least $2$. For each $\Pi\in\mathcal P_{\geq 2}(2r)$, let $s\in [n]^{|\Pi|}$ assign indices $[n]$ to the blocks in $\Pi$.  Denote $\mathcal P_{\geq 2}(2r,k) \subset \mathcal P_{\geq 2}(2r)$ to be the set of partitions of $[2r]$ with exactly $k$ blocks, each of size at least 2. 
    \item Encode each  $\Pi\in \mathcal P_{\geq 2}(2r,k)$ as a multigraph $G(\Pi)$ whose vertex set is $\Pi$. More precisely, consider $\mathcal G=([2r], E(\mathcal G))$ to be the graph with vertex set $[2r]$ and edge set 
    $$\{\{1,2\}, \{3,4\},\cdots,\{2r-1,2r\}\}.$$ 
    A multigraph $G(\Pi)$ is obtained by 
     defining $V(G(\Pi)) = \Pi$ and 
    \[E(G(\Pi)) = \{\{\tau_{2\ell-1},\tau_{2\ell}\}\mid \ell\in[r];\; i\in \tau_i\in \Pi \;\forall i\in [2r]\}.\]
    The multigraph $G(\Pi)$ has $r$ edges and $k$ vertices, and each vertex has a degree of at least 2. 
    For each vertex $\tau\in \Pi$, let $\deg_{G(\Pi)}(\tau)$ be the degree of  $\tau$ in $G(\Pi)$.
    \item We further denote $\mathcal P_{\geq 2}(2r,k,t)\subset \mathcal P_{\geq 2}(2r,k)$ to be the set of partitions of $[2r]$ with exactly $k$ blocks and the corresponding graph $G(\Pi)$ has $t$ connected components. 
\end{itemize}

Using the moment bounds
 \[\E|X_i|^l \leq p_i (K{l})^l \leq p_i (eK)^l l!,\]
we can bound \eqref{eq:quadratic} by 
\begin{align}\label{eq:quadratic2}
(eK)^{2r}\sum_{k=2}^{r} \sum_{t=1}^{k/2} \sum_{\Pi\in \mathcal P_{\geq 2}(2r,k,t)} &\Bigg(\prod_{\tau\in \Pi}\deg_{G(\Pi)}(\tau)! \nonumber\\
&\qquad \times \sum_{s\in [n]^{\Pi}}\left(\prod_{\{\pi_1,\pi_2\}\in E(G(\Pi))}a_{s_{\pi_1}s_{\pi_2} }\prod_{\pi \in \Pi}p_{s_\pi}\right)\Bigg).
\end{align}
Here in the sum, the variable $\Pi$ is a partition of $[2r]$, which indicates what group of $X_{i_1},\dots, X_{i_{2r}}$ are the copies of the same random variable. And $s:\Pi \to[n]$ indicates the index of $X_1,\dots, X_n$ that a certain group in $\Pi$ corresponds to. Note that $s$ should be an injection: if $i=s(\tau_1)=s(\tau_2)$ for some $\tau_1,\tau_2\in \Pi$, by definition $X_{i_{j_1}}$ and $X_{i_{j_2}}$ are the copy of the same random variable $X_i$, $\forall j_1\in \tau_1, j_2\in \tau_2$, and hence $\tau_1=\tau_2$ are the same group. We bound \eqref{eq:quadratic} with \eqref{eq:quadratic2} by changing the sum over $s$ from all possible injections $\Pi \to [n]$ to all maps $[n]^\Pi$.

Also, note that $A$ is diagonal-free. If there is some loop edge $\{\pi_1, \pi_2\}\in E(G(\Pi))$ with ${\pi_1}={\pi_2}$, there will be $a_{s_{\pi_1}s_{\pi_2}}=0$, annihilating the product and also the whole sum. Therefore, in the later discussion, we will focus on the subset of $\{\Pi\in\mathcal P_{\geq 2}(2r,k)\mid G(\Pi) \text{ is loop-free}\}$. Note that it also implies $2\leq k\leq r$ as it is impossible to be loop-free when $k=1$.

For simplicity, for a fixed multigraph $G$, we define
\begin{align}
\mathcal S_{G}:=\sum_{s\in [n]^{V(G)}}\left(\prod_{\{\pi_1,\pi_2\}\in E(G)} a_{s_{\pi_1}s_{\pi_2}}\prod_{\pi \in V(G)}p_{s_\pi}\right). 
\end{align}
Hence, as $\Pi = V(G(\Pi))$, \eqref{eq:quadratic2} becomes
\begin{align}\label{eq:quadratic3}
\mathbb E \left| \sum_{i\neq j} a_{ij} X_i X_j\right|^r &\le  (eK)^{2r}\sum_{k=2}^{r}  \sum_{t=1}^{k/2} \sum_{\Pi\in \mathcal P_{\geq 2}(2r,k,t) } \Big( \mathcal S_{G(\Pi)} \cdot\prod_{\tau\in \Pi}\deg_{G(\Pi)}(\tau)! \Big). 
\end{align}

\noindent{\bf Step 2. (Fix $\Pi$ and bound $\mathcal S_{G(\Pi)}$ uniformly).}
The next lemma provides estimates for $\mathcal S_{G(\Pi)}$. Recall that here $\gamma_{1,\infty}:=\max_i\left\{\sum_{j\neq i} |a_{ij}|p_j, \sum_{j\neq i} |a_{ji}|p_j\right\}$ for a diagonal-free matrix $A$.
\begin{lemma}\label{lem:main} For each fixed $\Pi\in \mathcal P_{\geq 2}(2r,k,t)$, with $\sigma^2 \coloneqq \sum_{i\neq j} a_{ij}^2p_ip_j$, there is
\begin{align}\label{eq:estS}
\mathcal S_{G(\Pi)} 
&\le \|A\|_{\max}^{r-k} \cdot \gamma_{1,\infty}^{k-2t} \cdot \sigma^{2t}. 
\end{align}
\end{lemma}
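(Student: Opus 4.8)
The plan is to bound $\mathcal S_{G(\Pi)}$ by induction on the structure of the multigraph $G=G(\Pi)$, peeling off edges and vertices one at a time while tracking three separate ``budgets'': a factor of $\|A\|_{\max}$ for each edge removed beyond a spanning structure, a factor of $\gamma_{1,\infty}$ for each degree-$1$ vertex stripped during a spanning-tree reduction, and a factor of $\sigma$ for each edge used in a spanning structure of a component. Since $G$ is loop-free with $r$ edges, $k$ vertices, $t$ connected components, and minimum degree at least $2$, a spanning forest of $G$ has $k-t$ edges, leaving $r-(k-t)$ ``surplus'' edges. The target exponents $\|A\|_{\max}^{r-k}\cdot\gamma_{1,\infty}^{k-2t}\cdot\sigma^{2t}$ should then emerge as: $r-k$ surplus edges contribute $\|A\|_{\max}$, the remaining $t$ surplus edges together with the forest edges get grouped into $t$ ``cycles'' (one per component) each contributing $\sigma^2$, and the forest edges not absorbed into cycles — there are $(k-t)-t = k-2t$ of them — contribute $\gamma_{1,\infty}$ each.

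Concretely, I would first reduce to a single connected component: $\mathcal S_G$ factorizes over connected components of $G$, so it suffices to prove that for a connected loop-free multigraph $G'$ on $k'$ vertices with $e'$ edges and minimum degree $\ge 2$, one has $\mathcal S_{G'}\le \|A\|_{\max}^{e'-k'}\gamma_{1,\infty}^{k'-2}\sigma^2$, and then multiply over the $t$ components. Within a component, the core mechanism is a summation-by-peeling argument: summing out a vertex $v$ of degree $d$ over the index $s_v\in[n]$, using $\sum_{s_v} p_{s_v}\prod_{u\sim v} a_{s_v s_u}\le \|A\|_{\max}^{d-1}\cdot\max_{s_{u_0}}\sum_{s_v}p_{s_v}|a_{s_v s_{u_0}}| \le \|A\|_{\max}^{d-1}\gamma_{1,\infty}$, i.e. we pay $\|A\|_{\max}$ for all but one incident edge and $\gamma_{1,\infty}$ for the last one. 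The delicate point is that a naive peeling of all vertices this way overcounts: it would give $\gamma_{1,\infty}^{k'}$ rather than $\gamma_{1,\infty}^{k'-2}\sigma^2$. To get the two factors of $\sigma$ per component I would isolate a cycle $C$ in $G'$ (which exists since min-degree $\ge 2$), peel all vertices and edges not on $C$ first (each non-cycle vertex removed in reverse-BFS order has, at the moment of removal, a ``pendant'' structure so the above bound applies cleanly), reducing to estimating $\mathcal S_C$ for a single cycle, and finally bound $\mathcal S_C=\sum_{s} p_{s_1}\cdots p_{s_\ell}a_{s_1 s_2}\cdots a_{s_\ell s_1}$ by Cauchy–Schwarz / the fact that a cycle is a closed walk: $\mathcal S_C\le \|A\|_{\max}^{\ell-2}\sum_{s_1,s_2}a_{s_1 s_2}^2 p_{s_1}p_{s_2} = \|A\|_{\max}^{\ell-2}\sigma^2$, obtained by bounding $\ell-2$ of the edge-factors by $\|A\|_{\max}$ and recognizing the remaining two-edge sum; a symmetric/matrix-power argument (treating $a_{s_i s_{i+1}}\sqrt{p_{s_i}p_{s_{i+1}}}$ as entries and using $\tr((DA D)^\ell)$-type bounds, where $D=\mathrm{diag}(\sqrt{p_i})$) also works and may be cleaner.

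The bookkeeping of exponents is the step most prone to error, but the genuine obstacle is making the peeling order rigorous: one must argue that a connected min-degree-$\ge 2$ multigraph always admits a distinguished cycle together with an ordering of the off-cycle vertices in which each is removed while being a leaf of the not-yet-processed subgraph, and that the degree-$d$ estimate $\|A\|_{\max}^{d-1}\gamma_{1,\infty}$ is applied exactly $k'-2$ times (not $k'$) because the two cycle-``anchor'' vertices are consumed by the $\sigma^2$ estimate instead. I would formalize this by induction on $e'-k'$ (the surplus): if $e'=k'$, $G'$ is unicyclic with trees hanging off the cycle, handled directly as above; if $e'>k'$, remove one surplus edge at the cost of $\|A\|_{\max}$ (this cannot disconnect $G'$ and cannot create a degree-$<2$ vertex if chosen on a cycle through a max-degree vertex, or one handles the degree drop by also peeling that vertex), and invoke the inductive hypothesis. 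Care is needed that removing an edge might drop a vertex's degree to $1$; I would handle this by, whenever that happens, immediately peeling the resulting leaf using one $\|A\|_{\max}$ and one $\gamma_{1,\infty}$, and checking the exponent count still closes. This is the heart of the argument and where I would spend most of the write-up.
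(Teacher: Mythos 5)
Your overall strategy coincides with the paper's: delete the edges outside a spanning-tree-plus-one-edge structure at cost $\|A\|_{\max}$ each, peel pendant vertices at cost $\gamma_{1,\infty}$ each, and handle one cycle per component with the factor $\sigma^2$. However, your treatment of the cycle contains a genuine gap, and it propagates into the exponent bookkeeping. First, the concrete estimate you propose, $\mathcal S_C\le \|A\|_{\max}^{\ell-2}\sigma^2$ ``by bounding $\ell-2$ of the edge-factors by $\|A\|_{\max}$,'' is not valid: once you discard an edge factor, the cycle vertices that are no longer covered by a retained edge contribute free sums $\sum_{s_i}p_{s_i}$, which can be of order $n\max_i p_i$ and are not bounded by $1$; every vertex weight $p_{s_i}$ must stay paired with at least one incident edge factor, which is exactly why only edges outside a spanning structure may be crudely bounded. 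Second, even if that inequality held, it would not close the count: with cycles of length $m_j$ the forest edges not absorbed into cycles number $k-\sum_j m_j$, and your claimed count $(k-t)-t=k-2t$ is correct only when every cycle has length $2$; with your cycle bound you would end up with $\|A\|_{\max}^{\,r-k+\sum_j(m_j-2)}\gamma_{1,\infty}^{\,k-\sum_j m_j}\sigma^{2t}$, which does not imply the lemma since $\|A\|_{\max}$ is in general much larger than $\gamma_{1,\infty}$ (take $p_i\ll 1$).

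The fix is the route you only mention in passing: write the cycle sum as a trace, $\mathcal S_{C_j}=\tr\bigl(B^{m_j}\bigr)$ with $B=\sqrt P A\sqrt P$, $P=\mathrm{diag}(p_1,\dots,p_n)$, and use $\tr(B^{m})\le \|B\|_F^2\|B\|^{m-2}$ together with $\|B\|_F^2=\sigma^2$ and the key operator-norm estimate $\|\sqrt P A\sqrt P\|\le\gamma_{1,\infty}$ (a Cauchy--Schwarz computation you never state or prove). This gives $\mathcal S_{C_j}\le\sigma^2\gamma_{1,\infty}^{m_j-2}$, and then the exponents close for arbitrary cycle lengths: $\gamma_{1,\infty}$ appears $\bigl[(k-t)-\sum_j(m_j-1)\bigr]+\sum_j(m_j-2)=k-2t$ times, yielding exactly $\|A\|_{\max}^{r-k}\gamma_{1,\infty}^{k-2t}\sigma^{2t}$. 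With that replacement your peeling argument matches the paper's proof; the elaborate induction on the surplus $e'-k'$ and the leaf-repair bookkeeping are unnecessary, since one can simply fix in advance a spanning tree plus one extra edge per component, delete all other edges at cost $\|A\|_{\max}^{r-k}$, and then peel leaves of the resulting unicyclic components.
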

The proof of Lemma \ref{lem:main} is deferred to Section \ref{subsec:lem}. It is worth noting that the bound on $\mathcal S_{G(\Pi)} $ in Lemma \ref{lem:main} is uniform and independent of $G(\Pi)$.  To proceed from \eqref{eq:quadratic3}, we claim that the following bound holds: 
\begin{align}\label{eq:counting}
    \sum_{\Pi\in \mathcal P_{\geq 2}(2r,k,t)} \prod_{\tau\in \Pi}\deg_{G(\Pi)}(\tau)! \leq 2^{6r} e^{(3+2/e)r} r^{2r-k-t}.
\end{align}
We establish \eqref{eq:counting} in Step 3 below.

\noindent {\bf Step 3. (Consider all possible $\Pi$).}
For each $G(\Pi)$ with $t$ connected components, we can partition their $r$ edges and $k$ vertices into $t$ components and denote $k_i, r_i$ to be the number of vertices and edges in the $i$-th component correspondingly. More precisely, such a partition $\Pi\in \mathcal P_{\geq 2}(2r,k,t)$ can be attained in the following steps:

\begin{enumerate}
    \item First we consider all the partitions that partition $[2r]$ into $k$ \emph{labeled} subsets, i.e. each partition is indeed a surjective map $\phi:[2r]\to [k]$. Let $\Phi(\Pi)$ denote the set of such surjection $\phi$ that corresponds to partition $\Pi$. Therefore, as the parts in partition $\Pi\in \mathcal P_{\geq 2}(2r,k,t)$ are not ordered, $|\Phi(\Pi)| = k!$.
    Hence, 
    \begin{align}
        \sum_{\Pi\in \mathcal P_{\geq 2}(2r,k,t)} \prod_{\tau\in \Pi}\deg_{G(\Pi)}(\tau)! &\leq \frac{1}{k!}\sum_{\Pi\in \mathcal P_{\geq 2}(2r,k,t)} \sum_{\phi\in \Phi(\Pi)} \prod_{\tau=1}^k\deg_{G(\phi)}(\tau)!.
    \end{align}
    
    \item To compute the sum over $\phi$, first consider all possible positive integer series of $(r_1,\dots, r_t)$ and $(k_1,\dots, k_t)$, such that $\sum_{i=1}^t r_i=r$ and $\sum_{i=1}^t k_i=k$. There are $\binom{r-1}{t-1}\binom{k-1}{t-1}\leq 2^{r+k}$ such pairs of series in total.
    
    \item For fixed series $(r_1,\dots, r_t)$ and $(k_1,\dots, k_t)$, consider the connected components in $G$ are ordered in lexicographic order on the edge partition and the $i$-th component has $r_i$ edges and $k_i$ vertices. As the order is determined by the edge partition, the edge with the smallest index in the $i$-th component is determined after fixing the edges in the first $i-1$ components. Therefore, there are at most
    \[\binom{r-1}{r_1-1}\binom{r-r_1-1}{r_2-1}\cdots \binom{r-r_1-\dots-r_{t-1}-1}{r_t -1}\leq \
    \prod_{i=1}^t \binom{r}{r_i-1}\]
    many ways to partition $r$ edges $\{\{1,2\},\dots,\{2r-1,2r\}\}$ into $t$ connected components such that the $i$-th component has $r_i$ edges. Similarly, as we have already ordered the connected components, there are
    \[\binom{k}{k_1}\binom{k-k_1}{k_2}\cdots \binom{k-k_1-\dots-k_{t-1}}{k_t}\leq \
    \prod_{i=1}^t \binom{k}{k_i}\]
    many ways to partition the vertices. Therefore,
    \begin{align}
        &\frac{1}{k!}\sum_{\phi} \prod_{\tau=1}^k\deg_{G(\phi)}(\tau)! \\
        &\leq \frac{2^{r+k}}{k!}\max_{\text{component partition}}  \left(\prod_{i=1}^t \binom{r}{r_i-1}\binom{k}{k_i} \right) \sum_{\phi:\text{consistent with } r_i,k_i} \prod_{\tau=1}^k\deg_{G(\phi)}(\tau)! \\
        & \leq \frac{2^{r+k}}{k!}\max_{\text{component partition}} \left(\prod_{i=1}^t \binom{r}{r_i-1}\binom{k}{k_i} \right)  \left(\prod_{i=1}^t \sum_{\phi_i: [2r_i]\to [k_i]} \prod_{\tau=1}^{k_i}\deg_{G(\phi_i)}(\tau)!\right).
    \end{align}
    
    \item Finally, inside $i$-th connected component, there are $r_i$ edges and $k_i$ labeled vertices $\{j_1,\dots, j_{k_{i}}\}$. Let us further fix the degree $d_1,\dots,d_{k_{i}}$ of each vertex and then sum over all possible degree sequences. As the vertices are labeled and the degree of each vertex is fixed, there are at most $\binom{2r_i}{d_1,\dots,d_{k_{i}}}$ many different partitions $\phi_i$ of $[2r_i]$. Therefore, with $d_1+\dots+d_{k_i}=2r_i$, we have
    \begingroup
\allowdisplaybreaks
    \begin{align}
        \sum_{\phi_i: [2r_i]\to [k_i]} \prod_{\tau=1}^k\deg_{G(\phi_i)}(\tau)! &\leq \sum_{d_1,\dots, d_{k_{i}}} \binom{2r_i}{d_1,\dots,d_{k_{i}}} \cdot \prod_{\ell = 1}^{k_i} d_{\ell}! \\ 
        &\leq \binom{2r_i-1}{k_i - 1} (2r_i)!\\
        &\leq \binom{2r_i}{r_i} (2r_i)!\\
        &= \left(\frac{(2r_i)!}{r_i!}\right)^2\\
        &\leq (2r_i)^{2r_i}.
    \end{align}
    \endgroup
\end{enumerate}

Combining all the analysis of $\Pi$ above, with $r_i\geq 2$ we have
\begingroup
\allowdisplaybreaks
\begin{align}\label{eq:degree}
    \sum_{\Pi\in \mathcal P_{\geq 2}(2r,k,t)} \prod_{\tau\in \Pi}\deg_{G(\Pi)}(\tau)! 
    &\leq \frac{2^{r+k}}{k!}\max_{\text{component partition}} \left(\prod_{i=1}^t \binom{r}{r_i-1}\binom{k}{k_i} \right)  \left(\prod_{i=1}^t 2^{2r_i}r_i^{2r_i}\right) \nonumber\\
    &\leq \frac{2^{r+k}}{k!} \max_{\sum r_i=r,\sum k_i=k} \prod_{i=1}^t \left(\frac{er}{r_i-1}\right)^{r_i-1}\left(\frac{ek}{k_i}\right)^{k_i}2^{2r_i} r_i^{2r_i}\nonumber\\
    &\leq \frac{2^{5r}e^{2r}}{k!} \max_{\sum r_i=r,\sum k_i=k} \prod_{i=1}^t \left(\frac{r}{r_i}\right)^{r_i-1}\left(\frac{k}{k_i}\right)^{k_i}r_i^{2r_i}\nonumber\\
    &\leq \frac{2^{5r}e^{2r}r^{2r}}{k!} \max_{\sum r_i=r,\sum k_i=k} \prod_{i=1}^t \left(\frac{r}{r_i}\right)^{r_i-1}\left(\frac{k}{k_i}\right)^{k_i}\left(\frac{r_i}{r}\right)^{2r_i}\nonumber\\
    &\underset{(\text{a})}{\leq}\frac{2^{5r}e^{2r}r^{2r}}{k!} \max_{\sum r_i=r,\sum k_i=k} \prod_{i=1}^t \left(\frac{r_i}{r}\right)^{r_i-k_i+1}.
\end{align}
\endgroup
In the inequality (a), we applied Gibbs' Inequality to the $(k/k_i)^{k_i}$ terms, which is introduced in Lemma~\ref{lem:gibbs}.

To proceed from \eqref{eq:degree}, we again apply Gibbs' inequality. Denote $z_i\coloneqq r_i - k_i +1 \geq 1$ and $Z=\sum_{i=1}^t z_i = r-k+t$. We then have
\begin{align}
    \prod_{i=1}^t \left(\frac{r_i}{r}\right)^{z_i} \leq \prod_{i=1}^t \left(\frac{z_i}{Z}\right)^{z_i} = Z^{-Z}\prod_{i=1}^t z_i^{z_i}\leq Z^{-Z}(Z-t+1)^{Z-t+1}.
\end{align}
Here, the last inequality follows from Lemma~\ref{lem:seq_prod_upper_bound} and that the maximum of $\prod_i z_i^{z_i}$ is attained when $t-1$ of the $z_i$'s are equal to 1, and the remaining one equals $Z-(t-1)$. Therefore, we have
\begingroup
\allowdisplaybreaks
\begin{align}
    \sum_{\Pi\in \mathcal P_{\geq 2}(2r,k,t)} \prod_{\tau\in \Pi}\deg_{G(\Pi)}(\tau)! &\leq \frac{2^{5r}e^{2r}r^{2r}}{k!}\cdot \frac{(r-k+1)^{r-k+1}}{(r-k+t)^{r-k+t}}\\
    &\leq 2^{5r}e^{2r+k} r^{2r} k^{-k} {t^{-(t-1)}}\\
    &\leq 2^{5r}e^{3r} r^{2r-k-t+1} \left(\frac{r}{k}\right)^k \left(\frac{r}{t}\right)^t\\
    &\leq 2^{6r} e^{(3+2/e)r} r^{2r-k-t}.
\end{align}
\endgroup
Here in the last inequality, we use the fact that $f(x) = (r/x)^x$ attains its maximum at $x=r/e$.\\

\noindent {\bf Step 4. (Conclusion).} For the last part of the proof, combining the results from Lemma~\ref{lem:main}, rearranging terms, and applying Young's inequality, we obtain the following moment bound. Here $\sigma^2\coloneqq \sum_{i\neq j}a_{ij}^2p_ip_j$. Continuing from \eqref{eq:quadratic3}, we have
\begingroup
\allowdisplaybreaks
\begin{align}
\mathbb E \left| \sum_{i\neq j} a_{ij} X_i X_j\right|^r &\le  (eK)^{2r}\sum_{k=2}^{r} \sum_{t=1}^{k/2} \sum_{\Pi\in \mathcal P_{\geq 2}(2r,k,t) } \Big( \mathcal S_{G(\Pi)} \cdot\prod_{\tau\in \Pi}\deg_{G(\Pi)}(\tau)! \Big)\\
&\leq (eK)^{2r}\sum_{k=2}^{r} \sum_{t=1}^{k/2} \|A\|_{\max}^{r-k} \gamma_{1,\infty}^{k-2t} \sigma^{2t} \sum_{\Pi\in \mathcal P_{\geq 2}(2r,k,t)} \prod_{\tau\in \Pi}\deg_{G(\Pi)}(\tau)! \\
&\leq (64 e^{(5+2/e)r}K^2)^{r}\sum_{k=2}^{r} \sum_{t=1}^{k/2} \|A\|_{\max}^{r-k} \gamma_{1,\infty}^{k-2t} \sigma^{2t} \cdot r^{2r-k-t}\\
&\leq (64 e^{(5+2/e)r}K^2)^{r} \sum_{k=2}^{r}\sum_{t=1}^{k/2} (r^2 \|A\|_{\max})^{r-k}  (r\gamma_{1,\infty})^{k-2t} (\sqrt{r} \sigma)^{2t}\\
&\leq (64 e^{(5+2/e)r}K^2)^{r} \sum_{k=2}^{r}\sum_{t=1}^{k/2} 3^r\max\left\{\frac{(r^2 \|A\|_{\max})^{r}}{r/(r-k)}, \frac{(r\gamma_{1,\infty})^{r} }{r/(k-2t)}, \frac{(\sqrt{r} \sigma)^{r}}{r/2t}\right\} \\
&\leq (192 e^{(5+2/e)}K^2)^{r} \cdot \frac{r^2}{2} \max\left\{{(r^2 \|A\|_{\max})^{r}},{(r\gamma_{1,\infty})^{r} }, {(\sqrt{r} \sigma)^{r}}\right\}\\
&\leq (C_1K^2)^r \max\left\{{(r^2 \|A\|_{\max})^{r}},{(r\gamma_{1,\infty})^{r} }, {(\sqrt{r}  \sigma )^{r}}\right\},
\end{align}
where we can take absolute constant $C_1=384 e^{(5+2/e)}$. Here in the deduction we use the fact that $r^2\leq 2^r$ if $r\in\{2\}\cup [4,+\infty)$.
\endgroup


\subsubsection{Proof of Lemma \ref{lem:main}} \label{subsec:lem}
For each fixed $\Pi\in \mathcal P_{\geq 2}(2r,k,t)$ and its corresponding graph $G:=G(\Pi)$, we aim to bound 
\begin{align}
\mathcal S_{G}:=\sum_{s\in [n]^{V(G)}}\left(\prod_{\{\pi_1,\pi_2\}\in E(G)} a_{s_{\pi_1}s_{\pi_2}}\cdot \prod_{\pi \in V(G)} p_{s_\pi}\right)
\end{align}
using the matrix norms of $A$. Recall that the graph $G$ contains $r$ edges, and each vertex has a degree of at least 2. For diagonal-free $A$, we recall that
\[\gamma_{1,\infty}:=\max_i\left\{\sum_{j\neq i} |a_{ij}|p_j, \sum_{j\neq i} |a_{ji}|p_j\right\}, \quad \sigma^2\coloneqq \sum_{i\neq j} a_{ij}^2p_ip_j.\]

The main idea for the estimates is to delete the edges in $G = G(\Pi)$ as follows: 
\begin{enumerate}
    \item For each connected component $\mathcal C$ of $G$, 
    we consider any fixed spanning subgraph $H_{\mathcal C}$ in $\mathcal C$ that contains exactly one cycle (or equivalently, spanning subgraph with $|E(H_{\mathcal C})|=|V(H_{\mathcal C})|$; such structure can be obtained by first picking any fixed spanning tree of $\mathcal C$ and then adding one extra edge to it).
    We then delete all other edges in $\mathcal C$. Let $G'$ be the remaining subgraph (the union of all $H_{\mathcal C}$) after the deletion of all connected components, and hence $|E(G')|= |V(G')|=k$. 
    
    For each deleted edge, say $\{u,v\}$, we bound the corresponding $a_{uv} \le \|A\|_{\max}$, and we will delete $r-k$ many edges this way in total, which implies that we delete a total of $r-k$ edges from $G$. These deleted edges collectively contribute a factor of $\|A\|_{\max}^{r-k}$ to bound $\mathcal S_{G}$, i.e.,
    \[\mathcal S_{G}\leq \|A\|_{\max}^{r-k} \cdot\mathcal S_{G'}\]

    \item Now $G'$ is a graph consisting of $t$ connected components, each containing exactly one cycle and possibly some disjoint branches (or sub-trees) connected to the cycles. Let $G''\subseteq G'$ denote the smaller subgraph containing all $t$ cycles, denoted by $C_j$ for $j=1,\ldots,t$. From $G'$ to $G''$, we can repeatedly remove vertex $v$ and its edge $\{u,v\}$, where the degree of $v$ is 1. We fix $u$ first and bound $\sum_{v=1}^n a_{uv}p_v \le \gamma_{1,\infty}$ or $\sum_{v=1}^n a_{vu}p_v \le \gamma_{1,\infty}.$ Suppose $\ell$ edges are removed in this step, we have $|E(G'')| = k-\ell$, and more importantly,  
    \[\mathcal S_{G'}\leq \gamma_{1,\infty}^{\ell} \cdot\mathcal S_{G''}.\]
Moreover, $\mathcal S_{G''}=\prod_{j=1}^t \mathcal S_{C_j}.$

    \item Finally, for each remaining cycle $C_j$ denoted by $(v_1,v_2,\cdots, v_m)$ with $m$ edges, we have
    \begin{align}\label{eq:trbd}
    &\sum_{\substack{s\in [n]^{V(C_j)}}} a_{s(v_1)s(v_2)} a_{s(v_2) s(v_3)} \cdots a_{s(v_m) s(v_1)} \cdot p_{s(v_1)} p_{s(v_2)} \cdots p_{s(v_m)} \\
    & \quad = \sum_{\substack{i_1,\dots,i_m =1}}^n a_{i_1i_2} a_{i_2i_3} \cdots a_{i_m i_1} \cdot p_{i_1} p_{i_2} \cdots p_{i_m}\\
    & \quad= \tr(B^{m})\leq \|B\|_F^2 \|B\|^{m-2},
    \end{align}
    where $B:=\sqrt{P}A\sqrt{P}$ and  $P:=\mathrm{diag}(p_1,\dots, p_n)$. Next, we show that $\|B\| \le \gamma_{1,\infty}$. To see this, assume that $\|B\|$ is attained at some vector $x=(x_1,\dots,x_n)^T\in \mathbb S^{n-1}$, i.e., $\|B\|=\|Bx\|$. By the Cauchy-Schwarz inequality, we have 
    \begin{align}
        \|B\|^2 &= \|Bx\|^2=  \sum_{i}\left(\sum_{j}a_{ij}\sqrt{p_i p_j}x_j\right) ^2 \leq \sum_{i}\left(\sum_{j}a_{ij}{p_j}\right)\left(\sum_{j}a_{ij}{p_i}x_j^2\right)\\
        &\leq \gamma_{1,\infty} \sum_{i}\left(\sum_{j}a_{ij}{p_i}x_j^2\right) = \gamma_{1,\infty} \sum_{j}\left(\sum_{i}a_{ij}{p_i}x_j^2\right)\\
        &= \gamma_{1,\infty}^2 \sum_{j}x_j^2 = \gamma_{1,\infty}^2.
    \end{align}
    It follows that $\tr(B^{m})\leq \|B\|_F^2 \|B\|^{m-2}\leq \sigma^2 \gamma_{1,\infty}^{m-2}$, where $\|B\|_F^2 = \sum_{i,j}a_{ij}^2p_ip_j = \sigma^2$ is the variance of the quadratic form. Hence,
    \[\mathcal S_{G''} = \prod_{j=1}^t \mathcal S_{C_j} \leq \sigma^{2t} \gamma_{1,\infty}^{k-\ell - 2t}.\]
\end{enumerate}

In the process above, we have $r-k$ edges deleted in the first step and $\ell$ edges deleted in the second step. Then
\begin{align}
    \mathcal S_{G} &\leq \|A\|_{\max}^{r-k} \cdot\mathcal S_{G'}
    \leq \|A\|_{\max}^{r-k} \gamma_{1,\infty}^{\ell} \cdot\mathcal S_{G''}
    \leq \|A\|_{\max}^{r-k} \gamma_{1,\infty}^{k-2t} \sigma^{2t}.
\end{align}


\subsection{Proof of Lemma \ref{lem:momentHW} for any $\alpha>0$}\label{sec:lemma-alpha}
We now present the essential modifications to the case $\alpha=1$ required to extend the proof of Lemma \ref{lem:momentHW} to arbitrary $\alpha \in (0,\infty]$, highlighting the differences. 

First of all, the moments of sparse $\alpha$-subexponential random variables satisfy
\[\E |X_i|^l \leq p_i K^l l^{\frac{l}{\alpha}}.\]
For $\alpha \ge 1$, we simply bound $\E |X_i|^l\leq p_i K^l l^l$, which leads to the same moment bounds as in the case where $\alpha=1$:
\begin{align}\label{eq:alpha>1}
\|X^T A X\|_{L_r} \le C_1 K^2\max\left\{r^{2}\|A\|_{\max}, r\gamma_{1,\infty},  \sqrt{r}\sigma\right\}.
\end{align}

Next, we will focus on the case where $\alpha \in (0,1)$. From the moment bounds 
\[\E |X_i|^l \leq p_i K^l l^{\frac{l}{\alpha}} \leq p_i (eK)^l l! \cdot l^{(\frac{1}{\alpha}-1)l},\]
analogous to the derivation of \eqref{eq:quadratic2}, we now have
\begin{align}
\label{eq: moment_alpha_with_deg}
    &\E|X^T A X|^r \nonumber\\
    &\leq (eK)^{2r}\sum_{k=2}^{r} \sum_{t=1}^{k/2} \sum_{\Pi\in \mathcal P_{\geq 2}(2r,k,t)}
    \!\left(\prod_{\tau\in \Pi}\deg_{G(\Pi)}(\tau)! \deg_{G(\Pi)}(\tau)^{(\frac{1}{\alpha}-1)\deg_{G(\Pi)}(\tau)} \mathcal{S}_{G(\Pi)}\!\right),
\end{align}
where $\mathcal P_{\geq 2}(2r,k,t)$ and the definitions of $G(\Pi), \mathcal{S}_{G(\Pi)}$ follow those in \eqref{eq:quadratic2}. 

Note that compared to \eqref{eq:quadratic2}, we only have an extra multiplicative factor 
\[\left(\prod_{\tau\in \Pi}\deg_{G(\Pi)}(\tau)^{\deg_{G(\Pi)}(\tau)}\right)^{\frac{1}{\alpha}-1}:=\left(\mathcal D(\Pi)\right)^{\frac{1}{\alpha}-1}.\]
We claim that $\mathcal D(\Pi) \le 2^{2r} r^{2(r-k+1)}$ uniformly for all $\Pi\in \mathcal P_{\geq 2}(2r,k,t)$ due to Lemma~\ref{lem:seq_prod_upper_bound}. Consider any fixed graph $G(\Pi)$ with $r$ edges and $k$ vertices. Let its degree sequence be denoted by $d_1,\dots, d_k$. We have $d_i \geq 2$ and $d_1+\dots+d_k = 2r$. Therefore, by Lemma~\ref{lem:seq_prod_upper_bound},
\begin{align}
    \prod_{i=1}^k d_i^{d_i} \leq 2^2 \times \dots\times 2^2 \times (2r-2(k-1))^{2r-2(k-1)} = 2^{2r} r^{2(r-k+1)}.
\end{align}
Using the estimates on this additional factor $\left(\mathcal D(\Pi)\right)^{\frac{1}{\alpha}-1}$ and then following a similar estimation procedure as in the proof of Lemma~\ref{lem:momentHW}, we obtain
\begingroup
\allowdisplaybreaks
\begin{align}
    \E|X^T A X|^r &\leq (64 e^{5+2/e}K^2)^{r}\sum_{k=2}^r \sum_{t=1}^{k/2}\|A\|_{\max}^{r-k} \gamma_{1,\infty}^{k-2t}\sigma^{2t}  r^{2r-k-t} \cdot \big(2^{2r} r^{2(r-k+1)}\big)^{\frac{1}{\alpha}-1}\\
    &\leq (2^{2(\frac{1}{\alpha}-1)}\cdot 64 e^{(5+2/e)}K^2)^{r} \sum_{k=2}^r \sum_{t=1}^{k/2}\|A\|_{\max}^{r-k} \gamma_{1,\infty}^{k-2t}\sigma^{2t}  r^{2r-k-t} \cdot r^{2(\frac{1}{\alpha}-1)(r-k+1)}\\
    &\leq (2^{\frac{2}{\alpha}}\cdot 16 e^{5+2/e}K^2)^{r} r^{2(\frac{1}{\alpha}-1)}\sum_{k=2}^r \sum_{t=1}^{k/2}\|A\|_{\max}^{r-k} \gamma_{1,\infty}^{k-2t}\sigma^{2t}  \cdot r^{\frac{2}{\alpha}r-(\frac{2}{\alpha}-1)k-t} \\
    & = (2^{\frac{2}{\alpha}}\cdot 16 e^{5+2/e}K^2)^{r} r^{2(\frac{1}{\alpha}-1)}  \sum_{k=2}^r \sum_{t=1}^{k/2} (r^{\frac{2}{\alpha}}\|A\|_{\max})^{r-k} (r\gamma_{1,\infty})^{k-2t}(\sqrt{r}\sigma)^{2t} \\
    &\leq (2^{\frac{2}{\alpha}}\cdot 16 e^{5+2/e}K^2)^{r} r^{2(\frac{1}{\alpha}-1)} \cdot \frac{r^2}{2}\cdot 3^r\max\left\{r^{\frac{2}{\alpha}}\|A\|_{\max}, r\gamma_{1,\infty},  \sqrt{r}\sigma\right\}^r\\
    &\leq (2^{\frac{3}{\alpha}}\cdot 48 e^{5+2/e}K^2)^{r} \max\left\{r^{\frac{2}{\alpha}}\|A\|_{\max}, r\gamma_{1,\infty},  \sqrt{r}\sigma\right\}^r.
\end{align}
\endgroup
Here we again use Young's inequality for the products of three terms.

Hence, we have 
$$\|X^T A X\|_{L_r} \le C_\alpha K^2\max\left\{r^{\frac{2}{\alpha}}\|A\|_{\max}, r\gamma_{1,\infty},  \sqrt{r}\sigma\right\}$$ 
for any positive even integer $r$ by choosing $C_\alpha = 48 e^{5+2/e}\cdot 8^{1/\alpha}$ (in the case $\alpha\in(0,1]$). Combining this bound with $C_1=384 e^{(5+2/e)}$ in the case of \eqref{eq:alpha>1}, by taking $C_\alpha=48 e^{5+2/e}\cdot 8^{\max\{1,1/\alpha\}}$ in general completes the proof of Lemma \ref{lem:momentHW}. 

\subsection{Proof of Theorem~\ref{thm:sparse_alpha}}\label{sec:proofmainthm}

    Combining the moment bound from Lemma \ref{lem:momentHW} and Markov's inequality, we derive
    \begin{align}
        \Pr{\left| \sum_{i\not=j} a_{ij} X_i X_j\right|\geq t } 
        & \leq \frac{\E \left| \sum_{i\not=j} a_{ij} X_i X_j\right|^r }{t^r}\\
        &\leq \left(\frac{1}t C_\alpha K^2\max\left\{r^{\max\{2,\frac{2}{\alpha}\}}\|A\|_{\max}, r\gamma_{1,\infty},  \sqrt{r}\sigma\right\}\right)^r.
    \end{align}

    For any $t>0$, if 
    \begin{equation}
    \label{eq:<2}
        \min\left\{\frac{t^2}{(C_\alpha e)^2 K^4 \sigma^2 }, \frac{t}{C_\alpha e K^2\gamma_{1,\infty}},\left(\frac{t}{C_\alpha e K^2\|A\|_{\max} }\right)^{\min\{\frac{\alpha}{2}, \frac{1}{2}\}}\right\} < 2,
    \end{equation}
    then in the inequality in Theorem~\ref{thm:sparse_alpha}, there is
    \[\exp\left(-\min\left\{\frac{t^2}{(C_\alpha e)^2 K^4 \sigma^2}, \frac{t}{C_\alpha e K^2\gamma_{1,\infty}},\left(\frac{t}{C_\alpha e K^2\|A\|_{\max} }\right)^{\min\{\frac{\alpha}{2}, \frac{1}{2}\}}\right\}\right)\geq e^{-2}.\]
   The conclusion of Theorem~\ref{thm:sparse_alpha} holds since the right-hand side of the bound is at least 1. 
    
    Next we assume the \eqref{eq:<2} fails.
    By choosing $r$ to be the largest positive even integer with
    $$2\leq r\leq \min\left\{\frac{t^2}{(C_\alpha e)^2 K^4 \sigma^2}, \frac{t}{C_\alpha e K^2\gamma_{1,\infty}},\left(\frac{t}{C_\alpha e K^2\|A\|_{\max} }\right)^{\min\{\frac{\alpha}{2}, \frac{1}{2}\}}\right\},$$ we have that 
    \[\frac{1}t C_\alpha K^2\max\left\{r^{\max\{2,\frac{2}{\alpha}\}}\|A\|_{\max}, r\gamma_{1,\infty},  \sqrt{r}\sigma\right\} \leq \frac{1}{e},\]
    and further 
    \begin{align*}
        &\Pr{\left| \sum_{i\not=j} a_{ij} X_i X_j\right|\geq t } \\
        &\leq e^{-r}\\
        &\leq \exp\left(-\left(\min\left\{\frac{t^2}{ (C_\alpha e)^2K^4 \sigma^2 }, \frac{t}{C_\alpha eK^2\gamma_{1,\infty}},\left(\frac{t}{C_\alpha e K^2\|A\|_{\max} }\right)^{\min\{\frac{\alpha}{2}, \frac{1}{2}\}}\right\} - 2\right)\right)\\
        &= e^2 \exp\left(-c_\alpha\min\left\{\frac{t^2}{ K^4 \sigma^2 }, \frac{t}{K^2\gamma_{1,\infty}},\left(\frac{t}{K^2\|A\|_{\max} }\right)^{\min\{\frac{\alpha}{2}, \frac{1}{2}\}}\right\} \right),
    \end{align*}
by choosing $c_\alpha = \min\{{(C_\alpha e)^{-2}},{(C_\alpha e)^{-1}}, {(C_\alpha e)^{-\min\{\frac{\alpha}{2}, \frac{1}{2}\}}}\} = (C_\alpha e)^{-2}$.
The proof of Theorem \ref{thm:sparse_alpha} is complete.


\subsection{Proof of Theorem~\ref{thm:linear}}\label{sec:prooflinear}
Theorem~\ref{thm:linear} is established through the following moment estimates of the linear form, which we shall prove first.

\begin{lemma}
    \label{lem:linear_moment}
   Under the assumptions of Theorem~\ref{thm:linear}, let $r$ be an even integer. We have 
    \begin{align*}
         \left\|\sum_{i=1}^n a_i X_i\right\|_{L_r}
        &\leq C_\alpha' K \max\left\{r^{\max\{1, \frac{1}{\alpha} \}}\|a\|_\infty,\sqrt{r}\lambda\right\}.
    \end{align*}
    Here, $C_1'=8^{1+1/\alpha}e^2$.
\end{lemma}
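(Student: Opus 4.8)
The plan is to establish the moment bound in Lemma~\ref{lem:linear_moment} by the same moment-expansion strategy used for Lemma~\ref{lem:momentHW}, noting that the argument is a strict simplification: the index sum will factorize over the blocks of a partition, so no graph/spanning-tree bookkeeping (i.e.\ no analogue of Lemma~\ref{lem:main}) is needed. We may assume $a_i\ge 0$ after replacing each $a_i$ by $|a_i|$, and since $r$ is even we expand $\E\big|\sum_i a_iX_i\big|^r=\sum_{i_1,\dots,i_r} a_{i_1}\cdots a_{i_r}\,\E[X_{i_1}\cdots X_{i_r}]$. By independence and centering a summand vanishes unless every value among $i_1,\dots,i_r$ occurs at least twice, so grouping tuples by the partition $\Pi$ of $[r]$ that records coincidences (and relaxing the injectivity of block labels, which is valid for an upper bound), and using $\E|X_i|^{\ell}\le p_i K^{\ell}\ell^{\ell/\alpha}$ together with $|\E[X_{i_1}\cdots X_{i_r}]|\le\prod_\tau\E|X_{s_\tau}|^{d_\tau}$, we obtain
\begin{align*}
\E\Big|\sum_i a_iX_i\Big|^r\le K^{r}\sum_{k=1}^{\lfloor r/2\rfloor}\ \sum_{\Pi\in\mathcal P_{\ge 2}(r,k)}\Big(\prod_{\tau}d_\tau^{\,d_\tau/\alpha}\Big)\prod_{\tau}\Big(\sum_{j=1}^n a_j^{\,d_\tau}p_j\Big),
\end{align*}
where $d_\tau\ge 2$ is the size of block $\tau$, $k=|\Pi|$, and $\sum_\tau d_\tau=r$, so $1\le k\le \lfloor r/2\rfloor$.

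For each block, $d_\tau\ge 2$ gives $\sum_j a_j^{d_\tau}p_j\le \|a\|_\infty^{d_\tau-2}\sum_j a_j^2p_j\le \|a\|_\infty^{d_\tau-2}\lambda^2$, hence $\prod_\tau\sum_j a_j^{d_\tau}p_j\le \|a\|_\infty^{r-2k}\lambda^{2k}$ uniformly over $\Pi\in\mathcal P_{\ge 2}(r,k)$. It then remains to bound $\sum_{\Pi\in\mathcal P_{\ge 2}(r,k)}\prod_\tau d_\tau^{d_\tau/\alpha}$. Splitting $\prod_\tau d_\tau^{d_\tau/\alpha}=\big(\prod_\tau d_\tau^{d_\tau}\big)\cdot\big(\prod_\tau d_\tau^{d_\tau}\big)^{1/\alpha-1}$, Lemma~\ref{lem:seq_prod_upper_bound} (with $k$ terms, each $\ge 2$, summing to $r$) yields $\prod_\tau d_\tau^{d_\tau}\le 2^{2(k-1)}(r-2(k-1))^{r-2(k-1)}\le 2^{r}r^{\,r-2k+2}$, which bounds the second factor uniformly when $\alpha<1$ and is $\le 1$ when $\alpha\ge 1$. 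For the first factor together with the number of partitions, pass to labeled blocks: $\sum_{\Pi\in\mathcal P_{\ge 2}(r,k)}\prod_\tau d_\tau^{d_\tau}\le \tfrac1{k!}\sum_{\phi:[r]\to[k]}\prod_\tau d_\tau(\phi)^{d_\tau(\phi)}$; grouping $\phi$ by its degree sequence, using $\binom{r}{d_1,\dots,d_k}=r!/\prod_\tau d_\tau!$ and $d^{d}\le e^{d}d!$, each degree sequence contributes at most $r!\,e^{r}$, and there are at most $\binom{r+k-1}{k-1}\le 4^{r}$ of them, so $\sum_{\Pi\in\mathcal P_{\ge 2}(r,k)}\prod_\tau d_\tau^{d_\tau}\le (4e)^{r}\,r!/k!\le (4e)^{r}r^{\,r-k}$.

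Combining the three estimates, $\E\big|\sum_i a_iX_i\big|^r\le (CK)^{r}\sum_{k=1}^{\lfloor r/2\rfloor} r^{\,r-k}\big(2^{r}r^{\,r-2k+2}\big)^{\max\{1/\alpha-1,\,0\}}\|a\|_\infty^{r-2k}\lambda^{2k}$. Absorbing the exponential-rate factor $2^{r\max\{1/\alpha-1,0\}}$ and the polynomial factor $r^{2\max\{1/\alpha-1,0\}}\le (e^{2/\alpha})^{r}$ into an $\alpha$-dependent base, the power of $r$ accompanying $\|a\|_\infty^{r-2k}\lambda^{2k}$ becomes exactly $r^{\max\{1,1/\alpha\}(r-2k)+k}$, so each summand equals $\big(r^{\max\{1,1/\alpha\}}\|a\|_\infty\big)^{r-2k}\big(\sqrt r\,\lambda\big)^{2k}\le \max\{r^{\max\{1,1/\alpha\}}\|a\|_\infty,\sqrt r\,\lambda\}^{r}$; summing over the at most $r\le 2^{r}$ values of $k$ and folding this in gives $\|\sum_i a_iX_i\|_{L_r}\le C'_\alpha K\max\{r^{\max\{1,1/\alpha\}}\|a\|_\infty,\sqrt r\,\lambda\}$ for even $r$ (the single-block term $k=1$ is precisely what produces the $r^{1/\alpha}$ scaling, and needs no separate handling once the polynomial-in-$r$ residuals are absorbed). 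The only real difficulty is the bookkeeping in the counting step, namely keeping the exponents of $r$ aligned so that $\|a\|_\infty$ carries exactly $r^{\max\{1,1/\alpha\}}$ and $\lambda$ exactly $\sqrt r$; this is a direct, lighter-weight version of Steps~3--4 in the proof of Lemma~\ref{lem:momentHW}. Finally, Theorem~\ref{thm:linear} is deduced from this moment bound by Markov's inequality and optimizing over even $r$, exactly as Theorem~\ref{thm:sparse_alpha} is obtained from Lemma~\ref{lem:momentHW} in Section~\ref{sec:proofmainthm}.
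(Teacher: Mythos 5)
Your proposal is correct and follows essentially the same route as the paper's proof: expand the even moment over partitions of $[r]$ with blocks of size at least $2$, bound each block's contribution by $K^{d}d^{d/\alpha}\|a\|_\infty^{d-2}\lambda^2$, split off $(\prod_\tau d_\tau^{d_\tau})^{1/\alpha-1}$ (trivial for $\alpha\ge 1$, Lemma~\ref{lem:seq_prod_upper_bound} for $\alpha<1$), and control the partition count by passing to labeled blocks and degree sequences, which yields exactly the exponent $r^{\max\{1,1/\alpha\}(r-2k)+k}$ and hence the stated bound. Your counting bookkeeping (via $d^d\le e^d d!$ and $r!/k!\le r^{r-k}$) differs only cosmetically from the paper's multinomial/Stirling estimates and gives a possibly different absolute constant, which is immaterial to the statement.
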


\begin{proof}[Proof of Lemma \ref{lem:linear_moment}]
Note that for a single variable we have $\E |X_i|^l \leq p_i K^l l^{\frac{l}{\alpha}}$. Let $\mathcal P(r,k)$ denote the set of all the partitions of $[r]$ into $k$ subsets and $\mathcal P_{\geq2}(r,k)$ denote the subset of those partitions where all blocks have size at least 2. Recall that $\lambda \coloneqq \sqrt{\sum_{i=1}^n a_i^2p_i}$. We obtain the following estimates:
    \begingroup
    \allowdisplaybreaks
    \begin{align}
            \E \left|\sum_{i=1}^n a_iX_i\right|^r &\leq  \sum_{i_1,\dots, i_r} a_{i_1}\dots a_{i_r} \E [X_{i_1}\dots X_{i_r}]\\
        &\leq \sum_{k=1}^{r/2}\sum_{\Pi \in \mathcal P_{\geq 2}(r,k)} \sum_{s\in [n]^\Pi} \prod_{\tau \in \Pi} a_{s(\tau)}^{|\tau|} \E |X_{s(\tau)}|^{|\tau|} \\
        &\leq \sum_{k=1}^{r/2}\sum_{\Pi \in \mathcal P_{\geq 2}(r,k)} \prod_{\tau \in \Pi}  \left(\sum_{s=1}^n a_{s}^{|\tau|} \E |X_{s}|^{|\tau|}\right) \\  
        &\underset{(\text{a})}{\leq} \sum_{k=1}^{r/2}\sum_{\Pi \in \mathcal P_{\geq 2}(r,k)} \prod_{\tau \in \Pi} \left( K^{|\tau|} |\tau|^{\frac{|\tau|}{\alpha}} \sum_{s=1}^n \|a\|_{\infty}^{|\tau|-2} a_{s}^{2}p_s\right)\\
        & =  K^{r} \sum_{k=1}^{r/2} \|a\|_\infty^{r-2k} \lambda^{2k}\sum_{\Pi \in \mathcal P_{\geq 2}(r,k)} \left(\prod_{\tau \in \Pi} |\tau|^{|\tau|}\right)^{\frac{1}{\alpha}}\\
        &\underset{(\text{b})}{\leq} K^{r} \sum_{k=1}^{r/2} \|a\|_\infty^{r-2k} \lambda^{2k} \frac{1}{k!} \sum_{\Pi \in \mathcal P_{\geq 2}(r,k)}  \sum_{\phi \in \Phi(\Pi)} \left(\prod_{i=1}^k |\phi^{-1}(i)|^{|\phi^{-1}(i)|}\right)^{\frac{1}{\alpha}}\\
        &\underset{(\text{c})}{=} K^{r} \sum_{k=1}^{r/2} \|a\|_\infty^{r-2k} \lambda^{2k} \frac{1}{k!}\sum_{\begin{subarray}c
            \phi:[r]\to [k],\\
            |\phi^{-1}(i)|\geq 2
        \end{subarray}} \left(\prod_{i=1}^k |\phi^{-1}(i)|^{|\phi^{-1}(i)|}\right)^{\frac{1}{\alpha}}\\
        & = K^{r} \sum_{k=1}^{r/2} \|a\|_\infty^{r-2k} \lambda^{2k} \frac{1}{k!}\sum_{r_1+\dots+ r_k=r;r_i\geq 2} \binom{r}{r_1,\dots,r_k}\left(\prod_{i=1}^k r_i^{r_i}\right)^{\frac{1}{\alpha}}\\
        &\leq e^{2r}K^r\sum_{k=1}^{r/2} \|a\|_\infty^{r-2k} \lambda^{2k} \frac{r^r}{k^k}\sum_{r_1+\dots+ r_k=r;r_i\geq 2}\left(\prod_{i=1}^k r_i^{r_i}\right)^{\frac{1}{\alpha}-1}.
    \end{align}
    \endgroup
In the inequality (a) above, we use the the fact that $|\tau|\geq 2$ for $\tau\in \Pi$ so $a_s^{|\tau|-2}\leq \|a\|_{\infty}^{|\tau|-2}$. In the inequality (b), similar to the proof of Lemma~\ref{lem:main}, we use $\Phi(\Pi)$ to denote the set of surjections $\phi:[r]\to [k]$ corresponding to the partition $\Pi$, and further in (c) we rewrite the sum over $\Pi$ to surjections $\phi$. Depending on the value of $\alpha$, we have the following two cases:
\begin{itemize}
    \item When $\frac{1}{\alpha}\geq 1$, applying Lemma~\ref{lem:seq_prod_upper_bound}, 
    for any fixed series $r_1,\dots, r_k$, we have
    \[\prod_{i=1}^k r_i^{r_i} \leq 2^{2(k-1)}(r-2(k-1))^{r-2(k-1)}\leq 4^k r^{r-2k+2}.\]
    Therefore, we further obtain
    \begingroup
    \allowdisplaybreaks
    \begin{align}
        \E \left|\sum_{i=1}^n a_iX_i\right|^r 
        &\leq e^{2r}K^r \sum_{k=1}^{r/2} \|a\|_\infty^{r-2k} \lambda^{2k} \cdot\frac{r^r}{k^k}\sum_{r_1+\dots+ r_k=r;r_i\geq 2}\left(\prod_{i=1}^k r_i^{r_i}\right)^{\frac{1}{\alpha}-1}\\
        &\leq e^{2r}K^r \sum_{k=1}^{r/2} \|a\|_\infty^{r-2k} \lambda^{2k} \cdot\frac{r^r}{k^k}\binom{r-1}{k-1} \left(4^kr^{r-2k+2}\right)^{\frac{1}{\alpha}-1}\\
        &\leq (2e^2K)^r\sum_{k=1}^{r/2} \|a\|_\infty^{r-2k} \lambda^{2k} \cdot\frac{r^r}{k^k}\left(4^kr^{r-2k+2}\right)^{\frac{1}{\alpha}-1}\\
        &\underset{(\text{d})}\leq (2e^2 K)^{r} (4r^2)^{(\frac{1}{\alpha}-1)r} e^{\frac re} \sum_{k=1}^{r/2} \|a\|_\infty^{r-2k} \lambda^{2k} \cdot r^{\frac{1}{\alpha}r - (\frac{2}{\alpha}+1)k}\\
        &\leq (2e^{2+\frac{1}{e}}8^{\frac{1}{\alpha}-1} K)^{r} \sum_{k=1}^{r/2} (r^{\frac{1}{\alpha}}\|a\|_\infty)^{r-2k} (\sqrt{r} \lambda)^{2k}\\
        &\underset{(\text{e})}{\leq}  (2e^{2+\frac{1}{e}}8^{\frac{1}{\alpha}-1} K)^{r} \frac{r}{2}\cdot\max_{1\le k \le r/2} 2^r\max\left\{\frac{(r^{\frac{1}{\alpha}}\|a\|_\infty)^r}{r/(r-2k)},\frac{(\sqrt{r} \lambda)^r}{r/(2k)}\right\}\\
        &\leq (8^{1/\alpha}e^2 K)^{r} \max\left\{r^{\frac{1}{\alpha}}\|a\|_\infty,\sqrt{r} \lambda\right\}^r,
    \end{align}
    \endgroup 
    where the inequality (d) is by rearranging terms and apply $f(x)=(r/x)^x\leq e^{r/e}$ to the $k^k$ term and the inequality (e) is due to Young's inequality.

    \item When $\frac{1}{\alpha}<1$, simply using the lower bound $(\prod_{i=1}^k r_i^{r_i})^{\frac{1}{\alpha}-1} \leq 1$ and the Young's inequality, we have    
    \begin{align}\label{eq:momentlinear}
        \E \left|\sum_{i=1}^n a_i X_i\right|^r 
        &\leq (e^2 K)^{r} \sum_{k=1}^{r/2} \|a\|_\infty^{r-2k} \lambda^{2k} \cdot\frac{r^r}{k^k}\sum_{r_1+\dots+ r_k=r;r_i\geq 2}\left(\prod_{i=1}^k r_i^{r_i}\right)^{\frac{1}{\alpha}-1}\nonumber\\  
        &\leq (e^2 K)^{r}\sum_{k=1}^{r/2} \|a\|_\infty^{r-2k} \lambda^{2k} \cdot\frac{r^r}{k^k} \binom{r-1}{k-1}\nonumber\\ 
        &\leq (2e^2 K)^{r} e^{r/e}\sum_{k=1}^{r/2} \|a\|_\infty^{r-2k} \lambda^{2k} \cdot r^{r-k}\nonumber\\ 
        &\leq (8e^2 K)^{r} \max\left\{r\|a\|_\infty,\sqrt{r} \lambda\right\}^r
    \end{align}
\end{itemize} 
Therefore, combining the two cases, taking $C_1'=8^{\max\{1,1/\alpha\}}e^2$ completes the proof.
\end{proof}

We are now positioned to prove Theorem \ref{thm:linear}. The proof follows a similar approach to that of Theorem~\ref{thm:sparse_alpha} in Section \ref{sec:proofmainthm}. In essence, we define $r$ as the largest positive even integer satisfying 
$$2\leq r\leq \min\left\{ \left(\frac{t}{C_\alpha' e K \|a\|_\infty}\right)^{\min\{\alpha,1\}}, \frac{t^2}{(C_\alpha')^2 e^2 K^2 \lambda^2}\right\}.$$ By employing analogous arguments to those given in the proof of Theorem~\ref{thm:sparse_alpha}, we conclude the proof of Theorem~\ref{thm:linear} with $c_{\alpha}' = (C_\alpha' e)^{-2}$.

\section{Improved precise tail bounds for the linear and quadratic forms}\label{sec:improved}

Before presenting our improved, precise tail bounds for the linear and quadratic forms for $\alpha$-subexponential random variables with $\alpha\in (1,\infty]$, we introduce several essential notations. 

For real numbers $x$ and $y$, when $x\geq -e^{-1}$, the \emph{Lambert W function} $W(x)$ is defined as the solution to the equation $ye^y=x$. When $x\in (-e^{-1},0)$, the equation admits two real solutions, corresponding to two branches of the Lambert W function: the principal branch $W_0(x)$ and the lower branch $W_{-1}(x)$. On this interval, $W_0(x)$ is an increasing function satisfying $W_0(x)\geq -1$, while $W_{-1}(x)$ is a decreasing function satisfying $W_{-1}(x)\leq -1$ (see Figure \ref{fig:lambert}). We compile the basic properties of this special function in Appendix \ref{app:lambert}; for a detailed discussion of this function, we refer to \cite{corless1996lambert}. 

Throughout the context, we denote 
\begin{align}\label{eq:beta}
    \beta:=1-\frac{1}{\alpha}\quad \text{for } \alpha\in (1,\infty].
\end{align}

\begin{theorem}[Tail bound for the linear form: $\alpha>1$]\label{thm:linearbetter}Under the assumptions in Theorem \ref{thm:linear} for $\alpha\in(1,\infty]$ where ${\sum_{i=1}^n a_i^2p_i}\le \lambda^2$ and $\|a\|_{\infty} \le M$, we have for any $t>0$,
\begin{align}\label{eq:bettertb}
    \Prob\left(\left|\sum_{i=1}^n a_iX_i \right|>t \right)
    \le e^2 \exp\left(-C_\alpha\min\left\{ \frac{t^2}{ K^2 \lambda^2}, \frac{t}{KM}\max\left\{ 1,  H_{\alpha}(T) \right\} \right\} \right),
\end{align}
where we denote 
$$H_{\alpha}(T):= \left[-\beta W_{-1}\Big( -\frac{1}{\beta}T^{-\frac{1}{\beta}}\Big) \right]^{\beta}\mathbf{1}(T>(e/\beta)^{\beta})\quad \text{with}\quad 
T:=\frac{tM}{c_\alpha K\lambda^2}.
$$
\end{theorem}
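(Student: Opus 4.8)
\emph{Proof idea.} The plan is to bound all even moments $\E S^{2k}$ of the linear form $S:=\sum_{i=1}^{n}a_iX_i$ by a sum indexed by the size $\ell$ of the support of the contributing multi-index, and then to pass to a tail estimate via Markov's inequality, optimising jointly over the moment order $2k$ and over $\ell$; the lower branch $W_{-1}$ of the Lambert function enters as the root of that optimisation. A first reduction cuts the work: one checks $-\beta W_{-1}(-\tfrac1\beta T^{-1/\beta})\ge1\iff T\ge e$, so $\max\{1,H_\alpha(T)\}=1$ whenever $T\le e$; for such $T$ the bound \eqref{eq:bettertb} already follows from Theorem~\ref{thm:linear} (using $\sum_ia_i^2p_i\le\lambda^2$, $\|a\|_\infty\le M$, and $\min\{\alpha,1\}=1$ for $\alpha>1$), so we may assume $T$ large.

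\emph{Step 1 (moments organised by support size).} Expand $\E S^{2k}=\sum_{j_1+\dots+j_n=2k}\binom{2k}{j_1,\dots,j_n}\prod_i a_i^{j_i}\E[X_i^{j_i}]$. By independence $\E[\prod_iX_i^{j_i}]=\prod_i\E[X_i^{j_i}]$, and $\E X_i=0$ kills every multi-index with some $j_i=1$; applying the triangle inequality, the hypothesis $\E|X_i|^{j_i}\le p_i(Kj_i^{1/\alpha})^{j_i}$, the bound $|a_i|^{j}\le M^{\,j-2}a_i^2$ for $j\ge2$, and $\sum_{i_1<\dots<i_\ell}\prod_mp_{i_m}a_{i_m}^2\le\lambda^{2\ell}/\ell!$ after grouping the surviving multi-indices by their support $\{i_1<\dots<i_\ell\}$ (so $\ell\le k$), one obtains
\[
\E S^{2k}\ \le\ K^{2k}\sum_{\ell=1}^{k}\frac{\lambda^{2\ell}M^{2k-2\ell}}{\ell!}\,R_\ell(2k),\qquad
R_\ell(2k):=\sum_{\substack{j_1,\dots,j_\ell\ge2\\ j_1+\dots+j_\ell=2k}}\binom{2k}{j_1,\dots,j_\ell}\prod_{m=1}^{\ell}j_m^{j_m/\alpha}.
\]
The crux is a sharp estimate of the purely combinatorial $R_\ell(2k)$: convexity of $x\mapsto x\log x$ shows $\prod_mj_m^{j_m}$ is maximal when a single block absorbs all the excess, and combining this with the multinomial count — equivalently, a saddle-point evaluation of $R_\ell(2k)=(2k)!\,[x^{2k}]\big(\sum_{j\ge2}\tfrac{j^{j/\alpha}}{j!}x^j\big)^{\ell}$ — yields a bound of the shape $R_\ell(2k)\le\big(C\,\ell^{\beta}(2k)^{1/\alpha}\big)^{2k}$ (up to lower-order factors) with $\beta=1-\tfrac1\alpha$ as in \eqref{eq:beta}. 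Since $\tfrac1\alpha+\beta=1$ this gives
\[
\E S^{2k}\ \le\ (CK)^{2k}(2k)^{2k}M^{2k}\sum_{\ell=1}^{k}\frac{1}{\ell!}\Big(\frac{\lambda^2}{M^2}\Big)^{\!\ell}\Big(\frac{\ell}{2k}\Big)^{\!2k\beta}.
\]
It is crucial that $R_\ell(2k)$ carries $\ell^{2k\beta}$ rather than the crude $\ell^{2k}$: the $\ell=k$ term then supplies the sub-Gaussian contribution $(C'K\lambda\sqrt k)^{2k}$, while the smaller $\ell$ supply the logarithmic Bennett gain; the crude bound would only reproduce Theorem~\ref{thm:linear}.

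\emph{Step 2 (tail bound by double optimisation).} By Markov, $\Prob(|S|>t)\le t^{-2k}\E S^{2k}$. For fixed $\ell$, the $\ell$-summand of $t^{-2k}\E S^{2k}$ is, up to the subexponential-in-$k$ combinatorial factor, $\big(\tfrac{CKM(2k)^{1/\alpha}\ell^{\beta}}{t}\big)^{2k}\tfrac{(\lambda^2/M^2)^{\ell}}{\ell!}$; minimising over $2k$ gives $2k_\ell\asymp(t/(CKM\ell^{\beta}))^{\alpha}$ and a contribution $\tfrac{(\lambda^2/M^2)^{\ell}}{\ell!}\exp\!\big(-c_\alpha(t/KM)^{\alpha}\ell^{1-\alpha}\big)$ (using $\alpha\beta=\alpha-1$). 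Optimising this over $\ell$ is the heart of the matter: parametrised correctly it amounts to inverting the map $v\mapsto v^{-\beta}e^{v}$ on $(\beta,\infty)$ at the value $T$, and this branch of the inverse is exactly $v=-\beta W_{-1}(-\tfrac1\beta T^{-1/\beta})$, with $\min_{v>0}v^{-\beta}e^{v}=(e/\beta)^{\beta}$ explaining both the solvability condition and the indicator threshold $T>(e/\beta)^{\beta}$ in $H_\alpha$ (equivalently, that $-\tfrac1\beta T^{-1/\beta}$ lie in the domain $[-e^{-1},0)$ of $W_{-1}$). Carrying this out, substituting back, and collecting $\alpha$-dependent constants yields $\Prob(|S|>t)\le e^2\exp\big(-C_\alpha\tfrac{t}{KM}H_\alpha(T)\big)$ for $T$ large; combined with the first-paragraph reduction for small $T$, and absorbing the $O(k)$ loss from summing over the $\le k$ values of $\ell$ into constants, this is \eqref{eq:bettertb}. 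As sanity checks: $-\beta W_{-1}(-\tfrac1\beta T^{-1/\beta})\sim\tfrac1\beta\log T$ as $T\to\infty$, so $H_\alpha(T)\sim(\log T)^{\beta}$, recovering Theorem~\ref{thm:simple}; and for $\alpha=\infty$ ($\beta=1$) the bound is Bennett's inequality up to constants.

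\emph{Main obstacle.} The delicate points are, first, making the combinatorial estimate $R_\ell(2k)\le(C\ell^{\beta}(2k)^{1/\alpha})^{2k}$ genuinely sharp in the exponent of $\ell$ uniformly over $1\le\ell\le k$ — the convexity argument controls the extreme configurations, but the intermediate $\ell$ require the saddle-point/generating-function computation, and any slack here degrades \eqref{eq:bettertb} back to Theorem~\ref{thm:linear}; and second, the joint optimisation over $(2k,\ell)$ and its reduction to the closed form through $W_{-1}$, which is sensitive to the $\alpha$-dependent constants (hence the free $c_\alpha,C_\alpha$ in the statement) and must be reconciled with the constraint $\ell\le k$. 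Passing from even moments to all $t>0$ and handling the Gaussian cross-over regime are routine.
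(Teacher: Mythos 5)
Your proposal follows essentially the same route as the paper's: expand even moments, group terms by support size $\ell$, bound the combinatorial factor via convexity of $x\log x$, apply Markov's inequality, and invert the resulting transcendental inequality via the lower branch $W_{-1}$. The combinatorial estimate $R_\ell(2k)\le\big(C\,\ell^\beta(2k)^{1/\alpha}\big)^{2k}$ that you flag as the main obstacle is in fact the easy part: after Stirling reduces the multinomial to $(2k)^{2k}/\prod_m j_m^{j_m}$, the remaining factor is $(\prod_m j_m^{j_m})^{1/\alpha-1}$ with a \emph{negative} exponent, so the Jensen lower bound $\prod_m j_m^{j_m}\ge(2k/\ell)^{2k}$ flips into the needed upper bound $(\ell/2k)^{2k\beta}$. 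That is exactly what the paper uses (the bound $\prod r_i^{r_i}\ge(r/k)^r$ in the proof of Lemma~\ref{lem:moment-linear}); no generating-function or saddle-point computation is required, and there is no hidden slack.

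Where your argument deviates, and where it needs repair, is the order of optimization in Step~2. Markov's inequality requires a single moment order $2k$ to be used for all $\ell$ simultaneously, so the quantity that validly bounds the tail is $\min_{2k}\sum_\ell g(\ell,2k)$, which is controlled by $\min_{2k}\max_\ell g(\ell,2k)$ up to a polynomial factor. Your proposal instead computes $\min_{2k}$ of each $\ell$-summand separately, letting $2k_\ell$ depend on $\ell$, and then optimizes over $\ell$ — i.e.\ $\max_\ell\min_{2k} g$. This is a \emph{lower} bound on $\min_{2k}\max_\ell g$ and does not by itself bound the probability. The two quantities do agree here because $\log g(\ell,2k)$ is convex in $2k$ and concave in $\ell$, so a minimax equality holds, but that step must be stated (and the constraint $\ell\le k$ reconciled with it). The paper avoids the issue entirely by optimizing in the opposite order: Lemma~\ref{lem:moment-linear} maximizes $h_\alpha(\ell)=\Delta^\ell\ell^{r\beta}r^{-\ell}$ over $\ell$ at fixed $r$ to obtain the moment estimate $\|\sum_i a_iX_i\|_{L_r}\lesssim_\alpha K\,rM/[\log(r/\Delta)]^{\beta}$ for $r>\Delta$, and only then chooses the Markov exponent $r$, which reduces cleanly to $e^u\le Lu$ and Proposition~\ref{prop:lambert}. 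Once you adopt that ordering (or explicitly invoke the minimax step), the rest of your outline — the Lambert-$W$ identities, the threshold $(e/\beta)^\beta$, the small-$T$ reduction to Theorem~\ref{thm:linear}, and the sub-Gaussian cross-over — is correct and matches the paper.
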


\begin{remark}
    Compared to Theorem \ref{thm:linear}, the large deviation regime exhibits a new tail bound of the form \[ \exp\left(-C_\alpha\frac{ t}{K M}\cdot H_{\alpha}(T) \right). \] The function $H_\alpha(t)$ is increasing in $t$ and grows to infinity as $t\to\infty$. Using the asymptotic expansion of the Lambert W function $W_{-1}(x)\approx \log(-x)-\log(-\log(-x))$ as $x\to 0^{-}$ (see \cite{corless1996lambert}), we obtain that for large $t$,
    \[ H_{\alpha}(T)\approx \left[ \log(T) + \beta\log\left( \beta^{-1}\log(T)-\log(\beta^{-1}) \right)-\beta \log(\beta^{-1}) \right]^\beta. \]
    In the special case where $\alpha=\infty$ and $\beta=1$, this simplifies to
    \[H_{\infty}(T) \approx \log(T) + \log(\log(T)) = \log\left(\frac{tM}{C K\lambda^2} \right) +\log \left(\log\left(\frac{tM}{C K\lambda^2} \right) \right).\]
\end{remark}

Next, we present the improved tail bound for the quadratic forms. 
\begin{theorem}[Tail bound for the quadratic form: $\alpha>1$]\label{thm:quadbetter}
Under the assumptions in Theorem \ref{thm:sparse_alpha} for $\alpha\in(1,\infty]$ where $\sum_{i\neq j} a_{ij}^2 p_ip_j \le \sigma^2$, $\gamma_{1,\infty} \le \Gamma_{1,\infty}$, and $\|A\|_{\max} \le M$, we have for any $t>0$,
\begin{align}\label{eq:quad-newtail}
&\Prob\left(\left|\sum_{i\neq j} a_{ij} X_i X_j \right|>t \right)\\
&\le e^2\exp\left(-C_{\alpha}\min\left\{ \frac{t^2}{ K^4 \sigma^2}, \frac{t}{ K^2 \Gamma_{1,\infty}}, \sqrt{\frac{t}{ K^2 M}} \max\left\{ 1,\min_{i=1,2}H_{\alpha}(T_i) \right\}\right\} \right),
\end{align}
where we denote \[H_{\alpha}(T):= \left[-\beta W_{-1}\Big( -\frac{1}{\beta}T^{-\frac{1}{\beta}}\Big) \right]^{\beta}\mathbf{1}(T>(e/\beta)^{\beta})\]
with
\[T_1:=\frac{M}{\Gamma_{1,\infty}}\left(\frac{t}{c_\alpha K^2 M }\right)^{1/2} \quad\text{and}\quad T_2:=\left(\frac{M}{\sigma}\right)^{2/3}\left(\frac{t}{c_\alpha' K^2 M}\right)^{1/2}.\]
\end{theorem}

\begin{remark}
Due to the implicit nature of the Lambert W function, for practical applications of Theorems \ref{thm:linearbetter} and \ref{thm:quadbetter}, we utilize the bounds established in \cite{chatzigeorgiou2013bounds}:
\begin{align}\label{eq:Wbound}
-1-\sqrt{2u} -u < W_{-1}(-e^{-u-1}) < -1-\sqrt{2u} - \frac{2}{3}u
\end{align}
for $u>0$. One can readily verify that Theorem \ref{thm:simple} follows directly from applying these bounds \eqref{eq:Wbound} to Theorems \ref{thm:linearbetter} and \ref{thm:quadbetter}, observing that $W_{-1}\big( -\frac{1}{\beta}T^{-\frac{1}{\beta}}\big) = W_{-1}\big(-e^{-\log\big(\frac{\beta}{e} T^{\frac{1}{\beta}}\big)-1}\big)$.
\end{remark}

\subsection{Proof of Theorem \ref{thm:linearbetter}}
We assume $$\sqrt{\sum_{i=1}^n a_i^2p_i}\le \lambda \quad \text{and}\quad \|a\|_{\infty}\le M.$$ We begin by refining the moment estimates in Lemma \ref{lem:linear_moment} for $\alpha>1$. The existing moment bounds in Lemma \ref{lem:linear_moment} for $\alpha>1$ can be expressed as
\begin{align}\label{eq:oldlinearm}
\left\|\sum_{i=1}^n a_iX_i\right\|_{L_r}
        &\leq
        \begin{cases}
           C_\alpha K (\sqrt{r}\lambda), &\text{if } r\leq \left(\frac{\lambda}{M}\right)^2;\\
           C_\alpha K (r M), &\text{if } r> \left(\frac{\lambda}{M}\right)^2.
        \end{cases}
\end{align}
Our primary input is an improved moment estimate in the regime where $r> \left({\lambda}/{M}\right)^2$: 
\begin{lemma}\label{lem:moment-linear}
Under the assumptions in Theorem \ref{thm:linearbetter}, let $r$ be an even integer. We have
\begin{align}
\left\|\sum_{i=1}^n a_iX_i\right\|_{L_r}
        &\leq
        \begin{cases}
           C_\alpha K (\sqrt{r}\lambda), &\text{if } r\leq \left(\frac{\lambda}{M}\right)^2;\\
           C_\alpha K \frac{r M}{\left[\log r + 2\log(M/\lambda) \right]^{1-\frac{1}{\alpha}}}, &\text{if } r>\left(\frac{\lambda}{M}\right)^2.
        \end{cases}
\end{align}
\end{lemma}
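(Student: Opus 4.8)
The plan is to revisit the moment computation in the proof of Lemma~\ref{lem:linear_moment} and, instead of discarding the factor $\left(\prod_{i=1}^k r_i^{r_i}\right)^{\frac{1}{\alpha}-1}$ as in the case $\frac1\alpha<1$ (which is the relevant regime here since $\alpha>1$), extract the gain it provides. Recall from that proof the bound
\[
\E\left|\sum_{i=1}^n a_iX_i\right|^r \le (e^2K)^r\sum_{k=1}^{r/2}\|a\|_\infty^{r-2k}\lambda^{2k}\,\frac{r^r}{k^k}\sum_{r_1+\dots+r_k=r,\ r_i\ge 2}\left(\prod_{i=1}^k r_i^{r_i}\right)^{\frac1\alpha-1}.
\]
Since $\frac1\alpha-1=-\beta<0$, the sum $\prod_i r_i^{r_i}$ is \emph{minimized} (hence the power is maximized) when the $r_i$ are as equal as possible, but it is \emph{maximized} — giving the smallest contribution after raising to the negative power — when one block is as large as possible. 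The point is that in the regime $r>(\lambda/M)^2$ the dominant term in the outer sum over $k$ comes from small $k$ (the linear, non-Gaussian tail), and there $\prod_i r_i^{r_i}\gtrsim (r/k)^r$ by Gibbs' inequality (Lemma~\ref{lem:gibbs}), so $\left(\prod_i r_i^{r_i}\right)^{-\beta}\lesssim (k/r)^{\beta r}$, producing a genuine $r^{-\beta r}$-type saving.

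Concretely, the steps would be: (i) bound $\left(\prod_i r_i^{r_i}\right)^{-\beta}\le (r/k)^{-\beta r}\cdot C^r$ using $\prod_i r_i^{r_i}\ge (r/k)^r$ and crude combinatorial bounds on the remaining factors, absorbing the number $\binom{r-1}{k-1}\le 2^r$ of compositions into a constant to the $r$; (ii) substitute to get
\[
\E\left|\sum_{i=1}^n a_iX_i\right|^r \le (CK)^r\sum_{k=1}^{r/2}\|a\|_\infty^{r-2k}\lambda^{2k}\,\frac{r^r}{k^k}\left(\frac{k}{r}\right)^{\beta r} = (CK)^r r^{(1-\beta)r}\sum_{k=1}^{r/2}\|a\|_\infty^{r-2k}\lambda^{2k}\,\frac{k^{\beta r}}{k^k};
\]
(iii) with $\|a\|_\infty\le M$ and $\lambda$ as given, optimize over $k\in\{1,\dots,r/2\}$: writing $g(k)=(r-2k)\log M+2k\log\lambda+\beta r\log k - k\log k$ up to the $r^{(1-\beta)r}$ prefactor, the derivative vanishes near $k^\star\asymp \beta r/\log(M^2 r/\lambda^2)$ (here one uses $M\ge \lambda$, i.e. $\|a\|_\infty$ not too small relative to $\lambda$, which is exactly the regime $r>(\lambda/M)^2$ being nonempty after the relevant normalization — more precisely one checks $k^\star\le r/2$), and plugging $k^\star$ back yields
\[
\E\left|\sum_{i=1}^n a_iX_i\right|^r \le (CK)^r\, M^r\,\frac{r^r}{\big[\log r + 2\log(M/\lambda)\big]^{(1-\frac1\alpha)r}},
\]
which is the claimed bound after taking $r$-th roots; (iv) for $r\le(\lambda/M)^2$ nothing changes and the Gaussian moment bound from Lemma~\ref{lem:linear_moment} is kept verbatim.

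The main obstacle is step (iii): carrying out the optimization over $k$ cleanly enough that the logarithmic correction comes out as exactly $\log r + 2\log(M/\lambda)$ rather than some messier expression, and verifying that the optimal $k^\star$ genuinely lies in the admissible range $[1,r/2]$ throughout the regime $r>(\lambda/M)^2$ — this is where one must be careful about whether the Gaussian term ($k=r/2$) or the genuinely sub-Gaussian/intermediate term dominates, and about the interplay between $r$, $\lambda$, and $M$. A secondary technical point is making sure the lower bound $\prod_i r_i^{r_i}\ge(r/k)^r$ (Gibbs) is applied in the right direction given the negative exponent, and that all the crude $C^r$ factors (from compositions, from Stirling, from $e^{r/e}$-type terms) are collected into the single constant $C_\alpha$ without hiding an $\alpha$- or $r$-dependence that would spoil the bound.
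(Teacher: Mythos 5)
Your proposal follows essentially the same route as the paper's proof: starting from the moment expansion in Lemma~\ref{lem:linear_moment}, using the Jensen/Gibbs lower bound $\prod_i r_i^{r_i}\ge (r/k)^r$ to exploit the negative exponent $\frac{1}{\alpha}-1=-\beta$, and then optimizing $\Delta^k k^{\beta r} r^{-k}$ over $k$ at the critical point $k_*\asymp \beta r/\log(r/\Delta)$ with $\Delta=(\lambda/M)^2$, exactly as the paper does. The obstacle you flag in step (iii) is handled trivially in the paper: $\log h_\alpha(k)$ is concave, so the constrained maximum over $1\le k\le r/2$ is bounded by the unconstrained maximum $h_\alpha(k_*)\le k_*^{\beta r}$ (dropping $(\Delta/r)^{k_*}\le 1$, valid since $r>\Delta$), and the logarithm appears as $\log(r/\Delta)=\log r+2\log(M/\lambda)$ on the nose; no boundary check $k_*\le r/2$ is needed, since near the boundary the case-2 bound is simply weaker than the case-1 bound, a comparison deferred to Lemma~\ref{coro:linearbetter}.
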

In comparison to \eqref{eq:oldlinearm}, when $r> e \cdot \left(\frac{\lambda}{M}\right)^2$, the new upper bound in Lemma \ref{lem:moment-linear} is sharper than $C_\alpha K (r M)$ with an extra logarithmic term. When $\left(\frac{\lambda}{M}\right)^2 < r \le e \cdot \left(\frac{\lambda}{M}\right)^2$, the old upper bound $C_\alpha K (r M)$ in \eqref{eq:oldlinearm} is preferred. 

The following moment estimates follow immediately by combining  Lemma \ref{lem:moment-linear} with \eqref{eq:oldlinearm}. Define $$\Delta:=\left(\frac{\lambda}{M}\right)^2.$$

\begin{lemma}\label{coro:linearbetter}Under the assumptions in Theorem \ref{thm:linearbetter}, let $r$ be an even integer. We have
\begin{align}\label{eq:linearcom}
\left\|\sum_{i=1}^n a_iX_i\right\|_{L_r} \le C_\alpha K \max\left\{\sqrt{r}\lambda, rM\min\left\{ 1, \frac{1}{\left[\log (r/\Delta)  \right]_{+}^{1-\frac{1}{\alpha}}} \right\} \right\}.
\end{align}
\end{lemma}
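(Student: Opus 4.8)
The statement is a direct algebraic consolidation of the two regimes in Lemma \ref{lem:moment-linear}, so the plan is simply to check that the single-formula bound \eqref{eq:linearcom} dominates each of the two cases. First I would treat the regime $r \le \Delta = (\lambda/M)^2$: here $\log(r/\Delta) \le 0$, so by the convention $[\log(x)]_+ = 0$ and $\frac{1}{[\log(x)]_+} = \infty$, the minimum $\min\{1, [\log(r/\Delta)]_+^{-(1-1/\alpha)}\}$ equals $1$, and the right-hand side of \eqref{eq:linearcom} is $C_\alpha K \max\{\sqrt{r}\lambda, rM\}$. Since $r \le \Delta$ forces $rM \le \sqrt{r}\lambda$ (equivalently $\sqrt r \le \lambda/M$), this max equals $C_\alpha K \sqrt r \lambda$, matching the first case of Lemma \ref{lem:moment-linear} exactly.

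Next I would handle the regime $r > \Delta$. Then $\log(r/\Delta) = \log r + 2\log(M/\lambda) > 0$, so $[\log(r/\Delta)]_+ = \log r + 2\log(M/\lambda)$ and the bound from Lemma \ref{lem:moment-linear} reads $C_\alpha K \cdot rM / [\log(r/\Delta)]_+^{1-1/\alpha}$. I must show this is at most the right-hand side of \eqref{eq:linearcom}. If $[\log(r/\Delta)]_+ \ge 1$, then $\min\{1, [\log(r/\Delta)]_+^{-(1-1/\alpha)}\} = [\log(r/\Delta)]_+^{-(1-1/\alpha)}$ (using $1-1/\alpha \ge 0$ for $\alpha > 1$), so the term $rM \min\{\cdots\}$ inside the max on the right already equals the Lemma \ref{lem:moment-linear} bound divided by $C_\alpha K$, and taking the max with $\sqrt r \lambda$ only increases it. If instead $0 < [\log(r/\Delta)]_+ < 1$, then $[\log(r/\Delta)]_+^{-(1-1/\alpha)} > 1$, so $\min\{1, \cdots\} = 1$ and the right-hand side is $C_\alpha K \max\{\sqrt r \lambda, rM\} \ge C_\alpha K \, rM \ge C_\alpha K \, rM / [\log(r/\Delta)]_+^{1-1/\alpha}$, again as required (the last step uses $[\log(r/\Delta)]_+^{1-1/\alpha} \le 1$).

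There is essentially no obstacle here; the only points demanding care are the bookkeeping of the $[\,\cdot\,]_+$ and $1/[\,\cdot\,]_+ = \infty$ conventions introduced in the notation paragraph, the sign condition $1 - 1/\alpha \ge 0$ valid precisely for $\alpha \in (1,\infty]$, and confirming that in the $r \le \Delta$ case the maximum collapses to the $\sqrt r \lambda$ term. Combining the two regimes gives \eqref{eq:linearcom} for every even integer $r$, with the same constant $C_\alpha$ as in Lemma \ref{lem:moment-linear}.
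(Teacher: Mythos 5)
Your argument breaks down in the sub-case $\Delta<r<e\Delta$, i.e.\ $0<\log(r/\Delta)<1$. You write that $rM \ge rM/[\log(r/\Delta)]_+^{1-1/\alpha}$ ``using $[\log(r/\Delta)]_+^{1-1/\alpha}\le 1$,'' but that hypothesis forces the inequality to go the other way: dividing $rM$ by a quantity strictly less than $1$ makes it \emph{larger}, not smaller. Indeed the bound of Lemma~\ref{lem:moment-linear} blows up as $r\to\Delta^+$ (the denominator $[\log(r/\Delta)]^{1-1/\alpha}\to 0^+$), while the right-hand side of \eqref{eq:linearcom} stays bounded, so the domination you are attempting to prove simply fails in this regime.

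The underlying issue is that you treat the lemma as a consolidation of Lemma~\ref{lem:moment-linear} alone. The paper states that the lemma follows by combining Lemma~\ref{lem:moment-linear} \emph{with} \eqref{eq:oldlinearm}; you never invoke \eqref{eq:oldlinearm}, and that is exactly what you need when $\Delta<r<e\Delta$. In that window \eqref{eq:oldlinearm} gives $\|\sum_i a_iX_i\|_{L_r}\le C_\alpha K\,rM$, which is $\le C_\alpha K\max\{\sqrt r\lambda,\,rM\}$, precisely the right-hand side of \eqref{eq:linearcom} (since $\min\{1,\cdot\}=1$ there, as you correctly note). The case $r\le\Delta$ and the case $r\ge e\Delta$ in your proposal are both correct; just replace the final chain of inequalities in the intermediate window by the comparison with \eqref{eq:oldlinearm}, and the proof is complete.
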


We defer the proof of Lemma \ref{lem:moment-linear} to the end of this section. First, we show that Theorem \ref{thm:linearbetter} is a consequence of Lemma \ref{lem:moment-linear} and \ref{coro:linearbetter}.

\begin{proof}[Proof of Theorem \ref{thm:linearbetter}]
 Using Lemma \ref{coro:linearbetter} together with Markov's inequality, we  establish the tail probability estimates for any positive even integer $r$:
\begin{align}
\Prob\left(\left|\sum_{i=1}^n a_iX_i \right|>t \right) &\le \frac{\E|\sum_{i=1}^n a_iX_i |^r}{t^r} \\
&\le \left(\frac{ 1}{t} C_\alpha K \max\left\{\sqrt{r}\lambda, \min\left\{ r M, \frac{r M}{\left[\log (r/\Delta)  \right]^{1-\frac{1}{\alpha}}}\mathbf{1}(r>\Delta)\right\} \right\} \right)^r .
\end{align}
Similar to the proof of Theorem~\ref{thm:sparse_alpha} in Section \ref{sec:proofmainthm}, we choose $r$ to be the largest even integer such that 
\begin{align}\label{eq:rineq}
\frac{ 1}{t}  C_\alpha K \max\left\{\sqrt{r}\lambda, \min\left\{ rM, \frac{r M}{\left[\log (r/\Delta)  \right]^{1-\frac{1}{\alpha}}}\mathbf{1}(r>\Delta)\right\} \right\}  \le e^{-1}. 
\end{align}
This choice leads to
\[\Prob\left(\left|\sum_{i=1}^n a_iX_i \right|>t \right) \le e^{-r}.\]
Note that when $t$ is very small, the inequality \eqref{eq:rineq} cannot be satisfied since $r\geq 2$. However, this case is handled by having $e^2$ on the right-hand side of \eqref{eq:bettertb}, making \eqref{eq:bettertb} trivially true. Therefore, we may assume without loss of generality that $t$ is sufficiently large and $r$ satisfying \eqref{eq:rineq} exists. 

Next, we analyze \eqref{eq:rineq} to estimate $r$. First, we observe that \eqref{eq:rineq} is equivalent to the following two constraints:
\begin{align}\label{eq:1st-r}
    r\le \frac{t^2}{(eC_{\alpha}K \lambda)^2}
\end{align}
and 
\begin{align}\label{eq:2nd-r}
\min\left\{ rM, \frac{r M}{\left[\log (r/\Delta)  \right]^{1-\frac{1}{\alpha}}}\mathbf{1}(r>\Delta)\right\} \le \frac{t}{e C_\alpha K}.
\end{align}

For the latter inequality, we consider two cases depending on $\log(r/\Delta)\le 1$ or $\log(r/\Delta) >1$. 
\begin{itemize}
    \item If $\log(r/\Delta)\le 1$ or $r \le e \Delta$, \eqref{eq:2nd-r} reduces to $rM \le \frac{t}{e C_\alpha K}$ and hence 
    \begin{align}\label{eq:3rd-r}
    r \le \frac{t}{eC_{\alpha} K M}.
    \end{align}

    \item If $\log(r/\Delta)> 1$ or $r > e \Delta$, \eqref{eq:2nd-r} reduces to 
    \[ \frac{r M}{\left[\log (r/\Delta)  \right]^{1-\frac{1}{\alpha}}} \le \frac{t}{e C_\alpha K}.\]
    Recall that $\beta=1-\frac{1}{\alpha}>0$. Rewriting this inequality, we further obtain
    \[ (r/\Delta)^{\frac{1}{\beta}} \le \beta\left( \frac{t}{eC_{\alpha} K \Delta M }\right)^{\frac{1}{\beta}} \log\big((r/\Delta)^{\frac{1}{\beta}}\big). \]
    For simplicity, denote 
    $$L:= \beta\left( \frac{t}{eC_{\alpha} K \Delta M}\right)^{\frac{1}{\beta}}= \beta T^{\frac{1}{\beta}}$$ 
    and $u=\log[(r/\Delta)^{\frac{1}{\beta}}]>\frac1\beta>0$. The inequality becomes  $e^u \le L u.$
    By Proposition \ref{prop:lambert}, the feasibility of this inequality implies that $L= \beta T^{\frac{1}{\beta}}> e$ 
    and its solution is 
    \[ -u \ge W_{-1}(-L^{-1}),\]
    which leads to 
    \[r \le \Delta \left[ e^{-W_{-1}(-L^{-1}) } \right]^{\beta} = \Delta  e^{-\beta W_{-1}(-\beta^{-1} T^{-\frac{1}{\beta}}) }.  \]
    We simplify the expression $e^{-\beta W_{-1}(-\beta^{-1} T^{-\frac{1}{\beta}}) }= e^{-\beta w_0}$ by setting $w_0 =W_{-1}(-\beta^{-1} T^{-\frac{1}{\beta}})$. Note that $w_0<0$ and $w_0 e^{w_0} = -\beta^{-1} T^{-\frac{1}{\beta}}$. Hence,  
    \[ e^{-\beta w_0} = [-\beta W_{-1}(-\frac{1}{\beta}T^{-\frac{1}{\beta}})]^\beta \cdot T\]
    and we obtain \[ r \le \Delta T [-\beta W_{-1}(-\frac{1}{\beta}T^{-\frac{1}{\beta}})]^\beta\]
    as long as  $L= \beta T^{\frac{1}{\beta}}> e$.
\end{itemize}

Combining the two cases, we derive that \eqref{eq:2nd-r} is equivalent to
\begin{align*}
    r\le \max \left\{\frac{t}{eC_{\alpha} K M}, \Delta T \big[-\beta W_{-1}(-\frac{1}{\beta}T^{-\frac{1}{\beta}})\big]^\beta \mathbf{1}(\beta T^{\frac{1}{\beta}}> e) \right\}.
\end{align*}

In light of \eqref{eq:1st-r}, we conclude that 
\begin{align}\label{eq:rineq-2}
r\le \min\left\{\frac{t^2}{(eC_{\alpha}K \lambda)^2}, \max \left\{\frac{t}{eC_{\alpha} K M}, \Delta T \big[-\beta W_{-1}(-\frac{1}{\beta}T^{-\frac{1}{\beta}})\big]^\beta \mathbf{1}(\beta T^{\frac{1}{\beta}}> e) \right\} \right\}.
\end{align}
Note that $\Delta T = \frac{t}{eC_{\alpha} K M}$. Since we choose $r$ as the largest integer that satisfies the above inequality, \eqref{eq:bettertb} follows. 
\end{proof}

In the end of this section, we present the proof of Lemma \ref{lem:moment-linear}.
\begin{proof}[Proof of Lemma \ref{lem:moment-linear}]
We refine the estimate \eqref{eq:momentlinear} for $\alpha>1$ in the proof of Theorem \ref{thm:linear}. Starting from \eqref{eq:momentlinear}
\[\E \left|\sum_{i=1}^n a_i X_i\right|^r 
        \leq (C_\alpha K)^{r} \sum_{k=1}^{r/2} M^{r-2k} \lambda^{2k} \cdot\frac{r^r}{k^k}\sum_{r_1+\dots+ r_k=r;r_i\geq 2}\left(\prod_{i=1}^k r_i^{r_i}\right)^{\frac{1}{\alpha}-1},\]
we utilize the lower bound        
\[\prod_{i=1}^k r_i^{r_i} \geq \left(\frac{r}{k}\right)^r,\]
which is derived by applying Jensen's inequality to the function $x\log x$ after taking logarithms. 

It follows that 
\begin{align}
\E \left|\sum_{i=1}^n a_iX_i\right|^r &\le (C_\alpha K)^{r}\sum_{k=1}^{r/2} M^{r-2k} \lambda^{2k} \cdot{r^{r-k}} \binom{r-1}{k-1} \left( \frac{k^r}{r^r}\right)^{1-\frac{1}{\alpha}}\\
&\le (C_\alpha K)^r \max_{1\le k \le r/2} M^{r-2k} \lambda^{2k}\cdot 2^r r^{\frac{r}{\alpha}-k} (k^{r})^{1-\frac{1}{\alpha}},\\
&\le  (C_\alpha K)^r M^r(r^r)^{\frac{1}{\alpha}}\max_{1\le k \le r/2}\left( \frac{\lambda^2}{M^2}\right)^{k} k^{r(1-\frac{1}{\alpha})} r^{-k}\\
&= (C_\alpha K)^r M^r(r^r)^{\frac{1}{\alpha}}\max_{1\le k \le r/2} h_{\alpha}(k),
\end{align}
where recall that $\Delta:=\frac{\lambda^2}{M^2}$ and  
$$h_{\alpha}(k):= \Delta^k k^{r(1-\frac{1}{\alpha})} r^{-k}= k^{r(1-\frac{1}{\alpha})} \left(\frac{\Delta}{r}\right)^k.$$ 
Our next step is to optimize the function $h_\alpha$. We split the discussion into two cases.\\
\medskip

\begin{itemize}
    \item \noindent{Case 1.} When $r\le \Delta$ 
    the function $h_\alpha(k)$ is increasing. Therefore,  
    \[h_\alpha(k)\leq h_\alpha(r/2) = \frac{\lambda^{r}}{2^{r(1-\frac{1}{\alpha})} M^{r}}r^{r(\frac{1}{2}-\frac{1}{\alpha})}.\]
This yields
    \[\E \left|\sum_{i=1}^n a_iX_i\right|^r \leq (C_\alpha K)^r (\sqrt{r}\lambda)^r.\]
    \item \noindent{Case 2.}  When $r>\Delta$ 
    we optimize $h_\alpha(k)$ by setting $(\log h_\alpha(k))' =0$:
\[ \log \Delta + \left( 1-\frac{1}{\alpha}\right) \frac{r}{k} - \log r = 0.\] This yields the critical point
\[ k_* = \left( 1-\frac{1}{\alpha}\right)\frac{r}{\log (r/\Delta)}.\]
Since $\log h_{\alpha}(k)$ is concave, its maximum occurs at $k_*$. Using the bound
\[h_{\alpha}(k)\le h_\alpha(k_*) \le k_*^{r(1-\frac{1}{\alpha})},  \]
and substituting $k_*$, we obtain
    \begin{align}
        \E \left|\sum_{i=1}^n a_iX_i\right|^r &\leq (C_\alpha K)^r M^r(r^r)^{\frac{1}{\alpha}}k_*^{r(1-\frac{1}{\alpha})} \leq (C_{\alpha}K)^r \left(M \frac{r}{[\log(r/\Delta)]^{1-\frac{1}{\alpha}}}\right)^r.
    \end{align}
\end{itemize}

\end{proof}


\subsection{Proof of Theorem \ref{thm:quadbetter}}

Previously, for $\alpha \ge 1$, we treated the moment estimates of the quadratic form $\sum_{i\neq j} a_{ij} X_i X_j$ identically to the case $\alpha = 1$. As given in Lemma \ref{lem:momentHW}, we obtained 
\begin{align}\label{eq:oldquadm}
\left\| \sum_{i\neq j} a_{ij} X_i X_j \right\|_{L_r} \le C_1 K^2 \max\{ \sqrt{r} \sigma, r \Gamma_{1,\infty}, r^2 M\},
\end{align}
where 
\[ \sum_{i\neq j} a_{ij}^2 p_i p_j \le \sigma^2, \quad \gamma_{1,\infty}\le \Gamma_{1,\infty}, \quad \|A\|_{\max} \le M.\]
Analogous to the linear case, we seek to improve this bound in the large deviation regime. Specifically, we provide refined moment estimates when the maximum on the RHS of \eqref{eq:oldquadm} is achieved at $r^2 M$, that is, when
$$r> \frac{\Gamma_{1,\infty}}{M}:=\Delta_1 \quad \text{and}\quad r^{3/2} > \frac{\sigma}{M}:=\Delta_2.$$

\begin{lemma}\label{lem:momentquad}
    Under the assumptions of Theorem \ref{thm:quadbetter}, for any integer $r$ satisfying $r> {\Gamma_{1,\infty}}/{M}:=\Delta_1$ and $ r^{3/2} > {\sigma}/{M}:=\Delta_2$, we have
\begin{align}\label{eq:momentquad}
\left\| \sum_{i\neq j} a_{ij} X_i X_j \right\|_{L_r} \le C_{\alpha} K^2 \frac{r^2 M}{[\min\{\log(r/\Delta_1), \log(r^{3/2}/\Delta_2) \} ]^{2-\frac{2}{\alpha}}}.
\end{align}
\end{lemma}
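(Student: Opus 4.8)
The plan is to sharpen the moment computation behind Lemma~\ref{lem:momentHW}, keeping, for $\alpha>1$, the degree-dependent factor that was merely bounded by $1$ there. I may assume $r$ is even; for odd $r$ one passes to $r+1$ using $\|\cdot\|_{L_r}\le\|\cdot\|_{L_{r+1}}$ and a harmless change of constant, since the hypotheses and both logarithms in \eqref{eq:momentquad} only improve as $r$ grows. The starting point is the expansion \eqref{eq: moment_alpha_with_deg}, whose derivation uses only $\ell^{\ell/\alpha}=\ell^{\ell}\,\ell^{(\frac1\alpha-1)\ell}\le e^{\ell}\ell!\,\ell^{(\frac1\alpha-1)\ell}$ and is thus valid for every $\alpha>0$: with $\mathcal D(\Pi):=\prod_{\tau\in\Pi}\deg_{G(\Pi)}(\tau)^{\deg_{G(\Pi)}(\tau)}$,
\[
\E\big|X^TAX\big|^r\le (eK)^{2r}\sum_{k=2}^{r}\sum_{t=1}^{k/2}\sum_{\Pi\in\mathcal P_{\geq 2}(2r,k,t)}\mathcal D(\Pi)^{\frac1\alpha-1}\,\mathcal S_{G(\Pi)}\prod_{\tau\in\Pi}\deg_{G(\Pi)}(\tau)!.
\]
Since $\frac1\alpha-1=-\beta<0$ for $\alpha>1$, the new ingredient — as opposed to the $\alpha<1$ analysis, which needed an \emph{upper} bound on $\mathcal D(\Pi)$ — is a uniform \emph{lower} bound: Jensen's inequality for the convex map $x\mapsto x\log x$, applied to the degree sequence $d_1,\dots,d_k$ of $G(\Pi)$ (which has $d_i\ge 2$ and $\sum_i d_i=2r$), gives $\sum_i d_i\log d_i\ge 2r\log(2r/k)$, hence $\mathcal D(\Pi)\ge(2r/k)^{2r}$ and therefore $\mathcal D(\Pi)^{\frac1\alpha-1}\le(k/r)^{2r\beta}$, uniformly over $\mathcal P_{\geq 2}(2r,k,t)$.

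I would then combine this with the two uniform-in-$\Pi$ bounds already established for fixed $(k,t)$: Lemma~\ref{lem:main}, giving $\mathcal S_{G(\Pi)}\le\|A\|_{\max}^{r-k}\gamma_{1,\infty}^{k-2t}\sigma^{2t}$, and the combinatorial estimate from Step~3 of the proof of Lemma~\ref{lem:momentHW}, $\sum_{\Pi\in\mathcal P_{\geq 2}(2r,k,t)}\prod_\tau\deg_{G(\Pi)}(\tau)!\le 2^{6r}e^{(3+2/e)r}r^{2r-k-t}$. Writing $r^{2r-k-t}=(r^2)^{r-k}r^{k-2t}(\sqrt r)^{2t}$, bounding $\|A\|_{\max}\le M$ and $\gamma_{1,\infty}\le\Gamma_{1,\infty}$, and absorbing the $O(r^2)$ pairs $(k,t)$ and all $r$-th--power absolute constants, this should reduce the lemma to a scalar optimization,
\[
\E\big|X^TAX\big|^r\le (C_\alpha K^2)^r(r^2M)^r\max_{\substack{2\le k\le r\\ 1\le t\le k/2}}\Big(\frac{k}{r}\Big)^{2r\beta}\Big(\frac{\Delta_1}{r}\Big)^{k-2t}\Big(\frac{\Delta_2^2}{r^3}\Big)^{t},
\]
where I used $(r^2M)^{-k}(r\Gamma_{1,\infty})^{k-2t}(\sqrt r\sigma)^{2t}=(\Delta_1/r)^{k-2t}(\Delta_2^2/r^3)^{t}$ with $\Delta_1=\Gamma_{1,\infty}/M$, $\Delta_2=\sigma/M$.

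The heart of the matter is this two-variable maximization, and the delicate point — which I expect to be the main obstacle — is showing that the surviving logarithm is the \emph{smaller} of the two candidates. Freezing $k$, the $t$-dependence is $\big(\Delta_2^2/(r\Delta_1^2)\big)^{t}$, monotone in $t$, so the maximum over $1\le t\le\lfloor k/2\rfloor$ is at $t=1$ if $\Delta_2^2<r\Delta_1^2$ and at $t=\lfloor k/2\rfloor$ if $\Delta_2^2\ge r\Delta_1^2$; in the regime $\Delta_1<r$, $\Delta_2<r^{3/2}$ a short computation shows the $t$-optimized bracket is at most $(k/r)^{2r\beta}\rho^{k}$ with $\rho=\Delta_1/r<1$ in the first case and $\rho=\Delta_2/r^{3/2}<1$ in the second. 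I would then optimize the log-concave map $k\mapsto(k/r)^{2r\beta}\rho^{k}$ on $[2,r]$: the interior critical point is $k_*=2r\beta/\log(1/\rho)$, where the value is $\big(2\beta/(e\log(1/\rho))\big)^{2r\beta}\le[\log(1/\rho)]^{-2r\beta}$ because $2\beta/e<1$. Since $\log(1/\rho)$ is $\log(r/\Delta_1)$ in the first case and $\log(r^{3/2}/\Delta_2)$ in the second, and since $\Delta_2^2<r\Delta_1^2$ holds exactly when $\log(r/\Delta_1)<\log(r^{3/2}/\Delta_2)$, in both cases $\log(1/\rho)=\min\{\log(r/\Delta_1),\log(r^{3/2}/\Delta_2)\}$ — the denominator we want. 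For $k_*$ outside $[2,r]$ the maximum is at a boundary point, where the claim is no stronger than the already-known $\|X^TAX\|_{L_r}\le C_1K^2r^2M$ (valid here because $r^2M$ dominates $\sqrt r\sigma$ and $r\gamma_{1,\infty}$) once one uses that $x\mapsto(\log x)^{2\beta}/x$ is bounded on $(1,\infty)$; so it still follows after enlarging $C_\alpha$. Putting the cases together and taking $r$-th roots gives \eqref{eq:momentquad}. Everything outside this optimization is a routine refinement of the bookkeeping in the proof of Lemma~\ref{lem:momentHW}.
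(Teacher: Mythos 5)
Your proposal is correct and follows essentially the same route as the paper: the same expansion with the degree factor $\mathcal D(\Pi)^{\frac1\alpha-1}$, the same Jensen lower bound $\mathcal D(\Pi)\ge(2r/k)^{2r}$, the same inputs from Lemma~\ref{lem:main} and Step~3 of Lemma~\ref{lem:momentHW}, and the same reduction to optimizing $(k/r)^{2r\beta}(\Delta_1/r)^{k-2t}(\Delta_2^2/r^3)^t$ with the case split $\Delta_2^2\lessgtr r\Delta_1^2$ and the interior critical point $k_*$ producing the minimum of the two logarithms. Your explicit treatment of the boundary cases for $k_*$ (and retention of the $\rho^{k_*}$ factor) is a harmless refinement of the paper's bound by the unconstrained maximum.
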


Note that $\beta=1-\frac{1}{\alpha}$. Combining Lemma \ref{lem:momentquad} with \eqref{eq:oldquadm}, we have
\begin{lemma}\label{lem:momentquad-1}
    Under the assumptions of Theorem \ref{thm:quadbetter}, for any integer $r\ge 1$, we have
   \begin{align}\label{eq:quad-newmoment}
&\left\| \sum_{i\neq j} a_{ij} X_i X_j \right\|_{L_r} \nonumber\\
&\le C_{\alpha} K^2\max\left\{ \sqrt{r}\sigma, r \Gamma_{1,\infty},r^2 M\min\left(1,\frac{1}{[\min\{\log(r/\Delta_1)_+, \log(r^{3/2}/\Delta_2)_+ \} ]^{2\beta}}\right) \right\}.
\end{align} 
\end{lemma}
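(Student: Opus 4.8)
The plan is to prove Lemma \ref{lem:momentquad} by refining the combinatorial moment estimate from the proof of Lemma \ref{lem:momentHW}, exactly mirroring the argument used in Lemma \ref{lem:moment-linear} for the linear form. Recall that in the proof of Lemma \ref{lem:momentHW} for $\alpha\ge 1$ one crudely bounds each degree factor $\deg(\tau)^{(1/\alpha-1)\deg(\tau)}$ by $1$ (since $1/\alpha-1\le 0$); the point here is to retain this factor and exploit it. So I would start from the analogue of \eqref{eq: moment_alpha_with_deg} valid for $\alpha>1$, namely
\begin{align*}
\E\Big|\sum_{i\neq j}a_{ij}X_iX_j\Big|^r \le (eK)^{2r}\sum_{k=2}^r\sum_{t=1}^{k/2}\sum_{\Pi\in\mathcal P_{\ge 2}(2r,k,t)}\Big(\prod_{\tau\in\Pi}\deg(\tau)!\;\big(\mathcal D(\Pi)\big)^{\frac1\alpha-1}\,\mathcal S_{G(\Pi)}\Big),
\end{align*}
where $\mathcal D(\Pi)=\prod_{\tau}\deg(\tau)^{\deg(\tau)}$. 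In the $\alpha\in(0,1)$ case one upper-bounds $\mathcal D(\Pi)\le 2^{2r}r^{2(r-k+1)}$; now, since the exponent $1/\alpha-1<0$, I instead need a \emph{lower} bound on $\mathcal D(\Pi)$. By Jensen's inequality applied to $x\mapsto x\log x$ on the degree sequence $d_1,\dots,d_k$ with $\sum d_i=2r$, one gets $\mathcal D(\Pi)=\prod d_i^{d_i}\ge (2r/k)^{2r}$, uniformly over $\Pi\in\mathcal P_{\ge 2}(2r,k,t)$. This is the quadratic-form counterpart of the lower bound $\prod r_i^{r_i}\ge (r/k)^r$ used in the proof of Lemma \ref{lem:moment-linear}.

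Next I would plug this lower bound into the summation, reuse the counting estimate $\sum_{\Pi\in\mathcal P_{\ge2}(2r,k,t)}\prod_\tau\deg(\tau)!\le 2^{6r}e^{(3+2/e)r}r^{2r-k-t}$ from Step 3 of the proof of Lemma \ref{lem:momentHW}, and the uniform bound $\mathcal S_{G(\Pi)}\le \|A\|_{\max}^{r-k}\gamma_{1,\infty}^{k-2t}\sigma^{2t}$ from Lemma \ref{lem:main}. Combining everything, the $r$-th moment is bounded by a constant-to-the-$r$ times
\begin{align*}
\sum_{k=2}^r\sum_{t=1}^{k/2} M^{r-k}\Gamma_{1,\infty}^{k-2t}\sigma^{2t}\,r^{2r-k-t}\,\Big(\tfrac{k}{2r}\Big)^{2r(1-\frac1\alpha)},
\end{align*}
after replacing $\gamma_{1,\infty},\|A\|_{\max},\sigma$ by their upper bounds $\Gamma_{1,\infty},M,\sigma$. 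Writing $\beta=1-\frac1\alpha$, I would then factor out $(r^2M)^r$ and reduce to maximizing, over $2\le k\le r$ and $1\le t\le k/2$, a function of the shape
\begin{align*}
g(k,t):=\Big(\frac{\Gamma_{1,\infty}}{rM}\Big)^{k-2t}\Big(\frac{\sigma}{r^{3/2}M}\Big)^{2t}k^{2\beta r}\, r^{-2\beta r}\cdot(\text{harmless }k,t\text{-factors}),
\end{align*}
where I will use $\Delta_1=\Gamma_{1,\infty}/M$ and $\Delta_2=\sigma/M$. The $t$-dependence is monotone (a geometric factor in $t$), so one optimizes in $t$ first — taking $t$ at an endpoint depending on the sign of $\log(r/\Delta_1)-\log(r^{3/2}/\Delta_2)$ — and this collapses the two "base" ratios into a single ratio $r/\Delta_1$ or $r^{3/2}/\Delta_2$, whichever is smaller. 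Then the $k$-optimization is exactly the one already carried out in the proof of Lemma \ref{lem:moment-linear}: the log-derivative $\big(\log g\big)'_k=0$ gives a critical point $k_*\asymp \beta r/\log(\cdot)$ with $\log(\cdot)=\min\{\log(r/\Delta_1),\log(r^{3/2}/\Delta_2)\}$, and concavity of $\log g$ in $k$ gives $g(k)\le k_*^{2\beta r}$, yielding the stated bound $\big\|\sum_{i\neq j}a_{ij}X_iX_j\big\|_{L_r}\le C_\alpha K^2\, r^2 M\,[\min\{\log(r/\Delta_1),\log(r^{3/2}/\Delta_2)\}]^{-2\beta}$. Finally, Lemma \ref{lem:momentquad-1} follows immediately by taking the pointwise minimum of this bound with the old bound \eqref{eq:oldquadm}: when $r\le\Delta_1$ or $r^{3/2}\le\Delta_2$ the relevant $\min$-quantity is nonpositive and the convention $[\,\cdot\,]_+$ together with the $\min(1,\cdot)$ makes the new term revert to $r^2M$, while the $\sqrt r\sigma$ and $r\Gamma_{1,\infty}$ terms are inherited unchanged from \eqref{eq:oldquadm}; packaging the two regimes with the $[\log(\cdot)]_+$ notation gives \eqref{eq:quad-newmoment}.

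The main obstacle I anticipate is the bookkeeping in the double optimization over $(k,t)$: unlike the linear case, here there are two competing "large-deviation directions" (a prominent row/column, governed by $\Gamma_{1,\infty}$, versus the Frobenius-type term $\sigma$), and one must check that after optimizing in $t$ the effective single logarithm is genuinely the minimum of the two candidate logarithms and that the leftover polynomial-in-$k,t$ prefactors (the $r^{-k-t}$, the binomial-type counting factors, constants of the form $3^r$ from Young's inequality) are all absorbed into the $C_\alpha^r$ without disturbing the exponent $2\beta$ on the logarithm. A secondary technical point is verifying that $k_*\le r/2$ (or handling the boundary case $k_*>r/2$ by monotonicity) so that the critical point is admissible; this is the same check as in Lemma \ref{lem:moment-linear} and should go through once $r>\Delta_1$ and $r^{3/2}>\Delta_2$ guarantee $\log(\cdot)>0$. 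Everything else — the lower bound on $\mathcal D(\Pi)$, the reuse of Lemma \ref{lem:main} and of Step 3's counting bound, and the concavity argument — is routine given the machinery already developed in Sections \ref{sec:lemma}–\ref{sec:lemma-alpha} and in the proof of Lemma \ref{lem:moment-linear}.
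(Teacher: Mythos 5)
Your proposal is correct and matches the paper's argument essentially step by step: the Jensen lower bound $\mathcal D(\Pi)\ge (2r/k)^{2r}$, reuse of the Step~3 counting estimate and Lemma~\ref{lem:main}, and the two-case optimization of $g_\alpha(k,t)$ governed by the sign of $\log(r/\Delta_1)-\log(r^{3/2}/\Delta_2)$, which is exactly the paper's split on $\Delta_2^2/(r\Delta_1^2)\lessgtr 1$. The final packaging --- combining the resulting refined bound with \eqref{eq:oldquadm} via $\min(1,\cdot)$ and the $[\log(\cdot)]_+$ convention --- is precisely how the paper deduces Lemma~\ref{lem:momentquad-1} from Lemma~\ref{lem:momentquad}.
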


We first demonstrate that Theorem \ref{thm:quadbetter} follows as a consequence of Lemma \ref{lem:momentquad-1}. The complete proof of Lemma \ref{lem:momentquad} is presented at the end of this section. 

\begin{proof}[Proof of Theorem \ref{thm:quadbetter}] 
With \eqref{eq:quad-newmoment}, the tail bound is derived analogously to the linear case. Starting with Markov's inequality and applying \eqref{eq:quad-newmoment}, we obtain
\begin{align*}
&\Prob\left(\left|\sum_{i\neq j}^n a_{ij}X_iX_j \right|>t \right) \le \frac{\E|\sum_{i\neq j}^n a_{ij}X_iX_j |^r}{t^r} \\
&\le \frac{1}{t^r}\left(C_{\alpha} K^2\max\left\{ \sqrt{r}\sigma, r \Gamma_{1,\infty},r^2 M\min\left(1,\frac{1}{[\min\{\log(r/\Delta_1)_+, \log(r^{3/2}/\Delta_2)_+ \} ]^{2\beta}}\right) \right\} \right)^r.
\end{align*}
We choose $r$ to be the largest even integer satisfying
\[ \frac{1}{t} C_{\alpha} K^2\max\left\{ \sqrt{r}\sigma, r \Gamma_{1,\infty},r^2 M\min\left(1,\frac{1}{[\min\{\log(r/\Delta_1)_+, \log(r^{3/2}/\Delta_2)_+ \} ]^{2\beta}}\right) \right\} \le e^{-1}.\]
To get the desired tail bound, we solve this inequality to express $r$ in terms of $t$. Note that the above inequality is equivalent to
\begin{align}
&\frac{1}{t} C_{\alpha} K^2 \sqrt{r}\sigma \le e^{-1},\quad \frac{1}{t} C_{\alpha} K^2 r \Gamma_{1,\infty} \le e^{-1},\label{eq:quad-case12}
\end{align}
and
\begin{align}\label{eq:quad-case3}
\frac{1}{t} C_{\alpha} K^2 r^2 M\min\left(1,\frac{1}{[\min\{\log(r/\Delta_1)_+, \log(r^{3/2}/\Delta_2)_+ \} ]^{2\beta}}\right) \le e^{-1}.
\end{align}
The first two inequalities \eqref{eq:quad-case12} are equivalent to 
\begin{align}\label{eq:quad-tail12}
r\le \frac{t^2}{e^2 C_{\alpha}^2 K^4 \sigma^2} \quad\text{and} \quad r\le \frac{t}{eC_{\alpha} K^2 \Gamma_{1,\infty}}.
\end{align}
It remains to solve \eqref{eq:quad-case3}. We consider two cases. 

\medskip

\noindent{Case 1}. If $1 \le \frac{1}{[\min\{\log(r/\Delta_1)_+, \log(r^{3/2}/\Delta_2)_+ \} ]^{2\beta}}$, then \eqref{eq:quad-case3} is equivalent to $\frac{1}{t} C_{\alpha} K^2 r^2 M \le e^{-1}$ and thus
\begin{align}\label{eq:quad-tail31}
    r\le \sqrt{\frac{t}{eC_{\alpha} K^2 M}}.
\end{align}

\noindent{Case 2}. If $1 > \frac{1}{[\min\{\log(r/\Delta_1)_+, \log(r^{3/2}/\Delta_2)_+ \} ]^{2\beta}}$, then this implies that
\[r/\Delta_1> e  \quad\text{and}\quad r^{3/2}/\Delta_2>e.  \] Furthermore, \eqref{eq:quad-case3} is equivalent to
\[ \frac{r^2 C_{\alpha} K^2 M}{[\min\{\log(r/\Delta_1), \log(r^{3/2}/\Delta_2)\} ]^{2\beta}} \le t e^{-1}.\]
Rearranging the terms, we obtain
\[r \le [\min\{\log(r/\Delta_1), \log(r^{3/2}/\Delta_2)\} ]^{\beta}\sqrt{\frac{t}{eC_{\alpha} K^2 M}},\]
which is equivalent to two inequalities
\begin{align}\label{eq:ineq-r1}
    r\le \sqrt{\frac{t}{eC_{\alpha} K^2 M}} [\log(r/\Delta_1)]^{\beta}
\end{align}
and
\begin{align}\label{eq:ineq-r2}
    r\le \sqrt{\frac{t}{eC_{\alpha} K^2 M}} [\log(r^{3/2}/\Delta_2)]^{\beta}.
\end{align}

\begin{itemize}
    \item Let us solve \eqref{eq:ineq-r1} first. Rewriting \eqref{eq:ineq-r1} to get 
    \[ \left(r/\Delta_1\right)^{1/\beta} \le \beta \left(\frac{t}{eC_{\alpha} K^2 M\Delta_1^2} \right)^{\frac{1}{2\beta}} \log[(r/\Delta_1)^{1/\beta}].\]
    Denote $u=\log[\left(r/\Delta_1\right)^{1/\beta}]>\frac1\beta>0$, $T_1 = \sqrt{\frac{t}{eC_{\alpha} K^2 M\Delta_1^2}}$, and $L_1 = \beta T_1^{1/\beta}$. The inequality is now written as 
    \begin{align}\label{eq:ineq-L1}
     e^u \le L_1 u.
    \end{align} 
    Note that by the supposition $r/\Delta_1 >e$ and $\beta \in (0,1]$, $u>1$. By Proposition \ref{prop:lambert}, when $L_1 > e$, \eqref{eq:ineq-L1} is solvable and its solution is 
    \begin{align}\label{eq:quad-tail321}
    r\le \Delta_1 \left[ e^{-W_{-1}(-L_1^{-1})} \right]^{\beta} = \Delta_1 T_1 \left[ -\beta W_{-1}\Big(-\frac{1}{\beta} T_1^{-1/\beta}\Big)\right]^{\beta}.
    \end{align}
    For the identity on the right-hand side of \eqref{eq:quad-tail321}, we used that if $ye^y=x$, then $y=W(x)$ and $e^{-W(x)}=\frac{W(x)}{x}$.

    \item The second inequality \eqref{eq:ineq-r2} can be solved analogously. First, \eqref{eq:ineq-r2} can be written as 
    \[\left(r/\Delta_2^{2/3}\right)^{1/\beta} \le \frac{3\beta}{2} \left(\frac{t}{eC_{\alpha} K^2 M\Delta_2^{4/3}} \right)^{\frac{1}{2\beta}} \log[(r/\Delta_2^{2/3})^{1/\beta}]. \]
    Set $u=\log[(r/\Delta_2^{2/3})^{1/\beta}] > \frac{2}{3\beta}>0$, $T_2 = \left(\frac{3}{2}\right)^\beta\cdot \sqrt{\frac{t}{eC_{\alpha} K^2 M\Delta_2^{4/3}}} $, and $L_2= \beta T_2^{1/\beta}.$ 
    The inequality becomes 
    \[e^u \le L_2 u, \]
    which, by Proposition \ref{prop:lambert}, is solvable when $L_2 > e$ and we have 
    \begin{align}\label{eq:quad-tail322}
    r\leq \Delta_2^{2/3} \left[ e^{-W_{-1}(-L_2^{-1})} \right]^{\beta}= \Delta_2^{2/3} T_2 \left[ -\beta W_{-1}\Big(-\frac{1}{\beta} T_2^{-1/\beta}\Big)\right]^{\beta}.
    \end{align}
\end{itemize}

Combining \eqref{eq:quad-tail321} and \eqref{eq:quad-tail322}, we obtain 
\[ r\le \sqrt{\frac{t}{eC_{\alpha}K^2M}}\cdot\min_{i=1,2} \left[ -\beta W_{-1}\Big(-\frac{1}{\beta} T_i^{-1/\beta}\Big)\right]^{\beta}\]
when $\min_{i=1,2} \beta T_i^{1/\beta} > e$. 
Together with \eqref{eq:quad-tail31}, we see
\begin{align*}
r&\le \sqrt{\frac{t}{eC_{\alpha} K^2 M}}\max\left\{1, \min_{i=1,2} \left[ -\beta W_{-1}\Big(-\frac{1}{\beta} T_i^{-1/\beta}\Big)\right]^{\beta}\mathbf{1}\left(\min_{i=1,2} \beta T_i^{1/\beta} > e\right)  \right\}.
\end{align*}
The conclusion follows by combining the preceding inequality with \eqref{eq:quad-tail12} and our choice of $r$.
\end{proof}

We finish this section with the proof of Lemma \ref{lem:momentquad}.
\begin{proof}[Proof of Lemma \ref{lem:momentquad}]
We will prove Lemma~\ref{lem:momentquad} by a more detailed variation of the proof of Theorem~\ref{thm:sparse_alpha}. Recall that from \eqref{eq: moment_alpha_with_deg} we have
\[ \E\left| \sum_{i\neq j} a_{ij} X_i X_j \right|^r \leq (eK)^{2r}\sum_{k=1}^r\sum_{t=1}^{k/2}\sum_{\Pi\in \mathcal P_{\geq 2}(2r,k,t)} \prod_{\tau\in \Pi}\deg_{G(\Pi)}(\tau)! \cdot \big(\mathcal D(\Pi)\big)^{\frac{1}{\alpha}-1} \mathcal S(\Pi), \]
where $\mathcal S(\Pi)$ is bounded in Lemma~\ref{lem:main} and $\mathcal D(\Pi)$ is defined as 
\[\mathcal D(\Pi) \coloneqq \prod_{\tau\in \Pi}\deg_{G(\Pi)}(\tau)^{\deg_{G(\Pi)}(\tau)}.\]

Recall that we simply ignored the extra $\left(\mathcal D(\Pi)\right)^{\frac{1}{\alpha}-1}$ term when $\alpha \geq 1$. However, let $d_1,\dots,d_k\geq 2$ to be the degree sequence in the graph $G(\Pi)$ with $d_1+\dots+d_k=2r$. By Jensen's Inequality, we have 
\[\mathcal D(\Pi) = \prod_{i=1}^k d_i ^{d_i} \geq \prod_{i=1}^k\left(\frac{2r}{k}\right)^{\frac{2r}{k}} = \frac{4^rr^{2r}}{k^{2r}}.\]

Then we can combine the bound of $\mathcal{D}(\Pi), \mathcal{S}(\Pi)$ with estimates in Step 3 of the proof of Lemma \ref{lem:momentHW}, which proved
\[\sum_{\Pi\in \mathcal P_{\geq 2}(2r,k,t)} \prod_{\tau\in \Pi}\deg_{G(\Pi)}(\tau)! \leq C^r r^{2r}k^{-k}t^{-t}.\]
Therefore, 
\begin{align}
    \E\left| \sum_{i\neq j} a_{ij} X_i X_j \right|^r &\leq (C_\alpha K^2)^r \sum_{k=1}^r \sum_{t=1}^{k/2} 
    \|A\|_{\max}^{r-k} \gamma_{1,\infty}^{k-2t} \sigma^{2t} \cdot
    r^{2r}k^{-k}t^{-t} \big(r^{-2r}k^{2r}\big)^{1-\frac{1}{\alpha}}\\
    &\le (C_\alpha K^2)^r \sum_{k=1}^r \sum_{t=1}^{k/2} 
    \|A\|_{\max}^{r-k} \gamma_{1,\infty}^{k-2t} \sigma^{2t}  \cdot
    r^{\frac{2}{\alpha} r} k^{(2-\frac{2}{\alpha})r-k} t^{-t}\\
    &\le (C_\alpha K^2)^r \sum_{k=1}^r \sum_{t=1}^{k/2} 
    M^{r-k} \Gamma_{1,\infty}^{k-2t} \sigma^{2t}  \cdot
    r^{\frac{2}{\alpha} r} k^{(2-\frac{2}{\alpha})r-k} t^{-t}.
\end{align}
Recall that $\Delta_1=\frac{\Gamma_{1,\infty}}{M}$ and $\Delta_2 = \frac{\sigma}{M}$. Applying the bounds $(r/k)^k \le c^r$ and $(r/t)^t \le c^r$ as in the proof of Lemma \ref{lem:momentHW} yields
\begin{align}
    \E\left| \sum_{i\neq j} a_{ij} X_i X_j \right|^r &\le (C_\alpha K^2)^r M^r r^{\frac{2}{\alpha}r} \sum_{k=1}^r \sum_{t=1}^{k/2} \Delta_1^{k-2t} \Delta_2^{2t} r^{-k-t} k^{(2-\frac{2}{\alpha})r}\\
    &\le (C_\alpha K^2)^r M^r r^{\frac{2}{\alpha}r} \max_{\substack{1\le k \le r\\ 1\le t \le k/2} } \left( \frac{\Delta_1}{r} \right)^{k-2t} \left(\frac{\Delta_2}{r^{3/2}} \right)^{2t} k^{(2-\frac{2}{\alpha})r}.
\end{align}
Denote
\[ g_{\alpha}(k,t):= \left( \frac{\Delta_1}{r} \right)^{k-2t} \left(\frac{\Delta_2}{r^{3/2}} \right)^{2t} k^{(2-\frac{2}{\alpha})r} = \left(\frac{\Delta_2^2}{r \Delta_1^2}\right)^t \left(\frac{\Delta_1}{r} \right)^{k} k^{(2-\frac{2}{\alpha})r} .\]
We optimize $g_{\alpha}(k,t)$ for $1\le t \le k/2$ and $1\le k \le r$ by considering two cases. 
\medskip

\noindent{Case 1.} When $\frac{\Delta_2^2}{r \Delta_1^2 } \le 1$, we bound $$g_{\alpha}(k,t) \le \left(\frac{\Delta_1}{r} \right)^{k} k^{(2-\frac{2}{\alpha})r}.$$
Since $r/\Delta_1>1$, following the Case 2 in the proof of Lemma \ref{lem:moment-linear}, we further have
\[ g_{\alpha}(k,t)  \le k_*^{(2-\frac{2}{\alpha})r} \quad \text{with} \quad k_* =  \left(2-\frac{2}{\alpha} \right) \frac{r}{\log(r/\Delta_1)}\]
and consequently, 
\[ \E\left| \sum_{i\neq j} a_{ij} X_i X_j \right|^r \le (C_\alpha K^2)^r M^r \frac{r^{2r}}{[\log(r/\Delta_1)]^{(2-\frac{2}{\alpha})r}}. \]

\noindent{Case 2.} When $\frac{\Delta_2^2}{r \Delta_1^2 } > 1$, we have $g_{\alpha}(k,t) \le g_{\alpha}(k,k/2)$. It suffices to optimize $g_{\alpha}(k,k/2)$ or equivalently, $$\log g_{\alpha}(k,k/2) = k\log\left( \frac{\Delta_2}{r^{3/2}} \right) + \left(2-\frac{2}{\alpha}\right) r \log(k).$$ Since $r^{3/2}/\Delta_2>1$, using the same argument as in Case 2 of the proof of Lemma \ref{lem:moment-linear}, we optimize $\log g_{\alpha}(k,k/2)$ and obtain 
\[ g_{\alpha}(k,t)  \le k_*^{(2-\frac{2}{\alpha})r} \quad \text{with} \quad k_* =  \left(2-\frac{2}{\alpha} \right) \frac{r}{\log(r^{3/2}/ \Delta_2)}\]
It follows that
\[ \E\left| \sum_{i\neq j} a_{ij} X_i X_j \right|^r \le (C_\alpha K^2)^r M^r \frac{r^{2r}}{[\log(r^{3/2}/\Delta_2)]^{(2-\frac{2}{\alpha})r}}. \]

Combining both cases, we obtain 
\[\E\left| \sum_{i\neq j} a_{ij} X_i X_j \right|^r \le (C_{\alpha}K^2)^r M^r\frac{r^2}{[\min\{ \log(r/\Delta_1),\log(r^{3/2}/\Delta_2) \}]^{(2-\frac{2}{\alpha})r}}. \]

\end{proof}


\section{Proofs of applications}\label{sec:proof-apply}

\subsection{Proof of Theorem \ref{thm:local}}
In this section, we present the proof of Theorem \ref{thm:local}. While our proof strategy closely follows the framework developed in \cite{HKM19} (and references therein), we focus on highlighting the key differences. 

The new technical ingredients in our proof are the large deviation inequalities for sparse $\alpha$-subexponential random variables. We first note that the large deviation inequalities in Theorem \ref{thm:sparse_alpha} and Theorem \ref{thm:linear} can be extended to complex coefficients or complex matrices by treating the real and imaginary parts separately. For the convenience of our subsequent analysis, we summarize the necessary results below. 

Let $a=(a_1,\cdots,a_n)^T\in \mathbb{C}^n$ and $A = (a_{ij})_{1\le i,j \le n} \in \mathbb{C}^{n\times n}$, where $A$ is diagonal-free and $A=A^*$. Consider a random vector $X=(X_1,\cdots, X_n)^T \in \mathbb{R}^n$ with independent centered $\alpha$-subexponential random components. Each $X_i = x_i \xi_i $, where $x_i, \xi_i$ are independent, $x_i\sim \mathrm{Ber}(p)$ and $\xi_i$ has mean 0, variance 1, and satisfies $\|\xi_i\|_{L_r} \le Kr^{\frac{1}{\alpha}}$ for all $r\ge 1$. Then by Theorem \ref{thm:sparse_alpha} and Theorem \ref{thm:linear}, there is a constant $C_\alpha >0$ such that for any $r\ge 2$,
\begin{align}
& \left\| \sum_{i=1}^n a_i X_i \right\|_{L_r} \le C_\alpha K \max\left\{ r^{\max\{\frac{1}{\alpha},1\}} \|a\|_\infty, \sqrt{r} \sqrt{p} \|a\|_2 \right\},\label{eq:linear}\\
&\left\| \sum_{i=1}^n a_i (X_i^2- p) \right\|_{L_r} \le C_\alpha K^2 \max \left\{ r^{\max\{\frac{2}{\alpha},1\}} \|a\|_\infty, \sqrt{r} \sqrt{p} \|a\|_2 \right\},\label{eq:linearsq}\\
&\| X^T A X \|_{L_r} \le C_\alpha K^2 \max \left\{ r^{\max\{\frac{2}{\alpha}, 2\} } \|A\|_{\max}, r p \|A\|_{1,\infty} , \sqrt{r} p \|A\|_F\right\}\label{eq:quad}.
\end{align}
To derive \eqref{eq:linearsq}, we first observe that $\|X_i^2-\E X_i^2\|_{L_r} \le 2\|X_i^2\|_{L_r}\le (p_i 2^r K^{2r} r^{\frac{2}{\alpha}})^{1/r}$. Then we apply Theorem \ref{thm:linear} to the centered random variables $X_i^2 - \E X_i^2$ to obtain \eqref{eq:linearsq}.
Without loss of generality, we assume $C_\alpha \ge 1$. Recall that $\widetilde{H}$ is the normalized version of $H$ whose entries satisfy Assumption \ref{assumption:tildeH}.

In order to prove Theorem \ref{thm:local}, it suffices to prove the following result. Define a fundamental error parameter
\begin{align}\label{eq:zeta}
\zeta\equiv \zeta(n,r,p,\eta,\alpha) := \sqrt{12 C_\alpha K^2 \max \left\{ \frac{r^{\max\{\frac{1}{\alpha},1 \}}}{\sqrt{np}}, \frac{r}{\sqrt{n\eta}}\right\}} .
\end{align}
\begin{theorem}\label{thm:localmain}
Let $z\in \mathbf S$, $r\in \mathbb N$ and $p\in (0,1)$. Assume $n\ge 10$, $r\ge 2$ and $r^{\max\{\frac{1}{\alpha},1 \}} \le \sqrt{np}$. For any $t\ge 1$ such that $20 t\zeta \le 1$, we have
\[ \Prob \left( \max_{i,j} |G_{ij}(z) - m(z) \delta_{ij}| > 12 t\zeta \right)\le 3 n^6 t^{-2r}.\]
\end{theorem}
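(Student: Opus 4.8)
The plan is to establish Theorem~\ref{thm:localmain} by a standard self-consistent equation / bootstrap argument for the Green function entries, with the large deviation inequalities \eqref{eq:linear}--\eqref{eq:quad} supplying the randomness control. First I would introduce the minors $H^{(i)}$ (the matrix with the $i$-th row and column removed) and its Green function $G^{(i)}(z)$, and record the Schur complement identities: $G_{ii} = \bigl(H_{ii} - z - \sum_{k,l\neq i} H_{ik} G^{(i)}_{kl} H_{li}\bigr)^{-1}$ and, for $i\neq j$, $G_{ij} = -G_{ii} G^{(i)}_{jj}\bigl(H_{ij} - \sum_{k,l\neq i,j} H_{ik} G^{(ij)}_{kl} H_{lj}\bigr) + (\text{similar})$. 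The key structural point is that conditionally on $H^{(i)}$, the sum $Z_i := \sum_{k,l\neq i} H_{ik} G^{(i)}_{kl} H_{li}$ is a quadratic form in the independent sparse $\alpha$-subexponential entries $\{H_{ik}\}_{k\neq i}$, with deterministic (given $H^{(i)}$) coefficient matrix $G^{(i)}$, and $\E[Z_i \mid H^{(i)}] = \frac1{np}\sum_{k\neq i} G^{(i)}_{kk} = \frac1{n}\tr G^{(i)} + O(\ldots)$, which is $\approx m(z)$ by the interlacing/resolvent identity $\tr G^{(i)} = \tr G + O(1/\eta)$.

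The main steps, in order: (1) \emph{Moment bounds for the error terms.} Apply \eqref{eq:quad} to the off-diagonal part of $Z_i - \E Z_i$ (coefficient matrix $\frac1{np}$ times the off-diagonal of $G^{(i)}$) and \eqref{eq:linearsq} to the diagonal part $\frac1{np}\sum_k (H_{ik}^2 - p)G^{(i)}_{kk}$, together with \eqref{eq:linear} for the bare term $H_{ii}$; using the Ward identity $\sum_l |G^{(i)}_{kl}|^2 = (\Im G^{(i)}_{kk})/\eta$ and $\|G^{(i)}\|\le 1/\eta$, $\|G^{(i)}\|_{\max}\le 1/\eta$, these reduce to the error parameter $\zeta$ in \eqref{eq:zeta}, so that $\|Z_i - \E Z_i\|_{L_r} \lesssim \zeta^2 \Psi$ on the event where $\max_{k}|G^{(i)}_{kk}|$ is controlled, where $\Psi$ bounds the relevant $G^{(i)}$ quantities. (2) \emph{Self-consistent equation.} Writing $v_i := G_{ii} - m$, the Schur identity plus the quadratic equation \eqref{eq:m} for $m$ gives $v_i = m^2(Z_i - \E Z_i) + m^2(\E Z_i - \langle G\rangle) + (\text{error quadratic in } v)$; iterating and using the stability of the self-consistent equation (controlled since $|m|\le 1$ and $|1-m^2|$ is bounded below away from the edge for $z\in\mathbf S$, or rather handled via the standard argument that near the edge uses $\Im m$) yields $\max_i |v_i| \lesssim t\zeta$ on an event of probability $\ge 1 - n\cdot(\text{tail})$. (3) \emph{Off-diagonal entries.} Feed the diagonal control back into the formula for $G_{ij}$, $i\neq j$, where again the fluctuating part is a bilinear form $\sum_{k,l} H_{ik} G^{(ij)}_{kl} H_{lj}$ in two \emph{independent} families of variables; decouple and apply \eqref{eq:quad} (after conditioning) to get $\max_{i\neq j}|G_{ij}| \lesssim t\zeta$. (4) \emph{Union bound and Markov.} For each fixed $(i,j)$ and each error term, Markov's inequality at exponent $2r$ converts the $L_r$ (hence $L_{2r}$) moment bound into a tail $\le t^{-2r}$ times a polynomial-in-$n$ factor; summing over $O(n^2)$ pairs and the $O(1)$ error terms produces the stated $3n^6 t^{-2r}$, after absorbing powers of $n$ coming from $\langle G\rangle$-type averaged quantities and from the crude bound $\|G\|\le n$ used to close the bootstrap at the initial (large-$\eta$) scale and propagate down in $\eta$.

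The hard part, and the place where care is genuinely needed, is the bootstrap/continuity argument that closes step (2): one must show the "bad" event (where some $|v_i|$ or $|G_{ij}|$ exceeds $t\zeta$) is contained in the union of the explicit large-deviation failure events, which requires that the deterministic control parameters ($\|G^{(i)}\|_{\max}$, $\Im\langle G\rangle$, etc.) are already under control — a circularity broken in the usual way by decreasing $\eta$ along a fine grid and using the deterministic Lipschitz continuity of $G$ in $z$ (with constant $\eta^{-2}\le n^2$) to transfer the estimate between grid points. A secondary technical point is verifying that $\zeta$ as defined in \eqref{eq:zeta} genuinely dominates all three competing terms in \eqref{eq:quad} and \eqref{eq:linearsq} simultaneously: the term $r^{\max\{2/\alpha,2\}}\|A\|_{\max}$ with $\|A\|_{\max}\le \frac1{np}\cdot\frac1\eta$ must be shown to be $\lesssim \max\{r^{\max\{1/\alpha,1\}}/\sqrt{np},\, r/\sqrt{n\eta}\}^2$ under the hypotheses $r^{\max\{1/\alpha,1\}}\le\sqrt{np}$ and $\eta>1/n$, which is exactly the role of that assumption in the theorem statement; similarly $rp\|A\|_{1,\infty}$ with $\|A\|_{1,\infty}\lesssim \frac1{np}\cdot\frac{\Im\langle G\rangle + \ldots}{?}$ — in fact here one uses $\|G^{(i)}\|_{1,\infty}$ is \emph{not} small, so instead one keeps the middle term $r p\|A\|_{1,\infty}$ with $A$ the \emph{rescaled} matrix $\frac1{np}G^{(i)}$, giving $\frac{r}{n}\|G^{(i)}\|_{1,\infty} \le \frac{r}{n}\cdot\frac{\sqrt n}{\eta}\cdot(\ldots)$ via Cauchy--Schwarz and Ward, which again folds into $r/\sqrt{n\eta}$. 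I would present these reductions explicitly once and then invoke them.
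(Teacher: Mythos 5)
Your overall architecture is the same as the paper's: Schur complement identities, the sparse large deviation inequalities \eqref{eq:linear}--\eqref{eq:quad} applied conditionally on the minor, the Ward identity, stability of the self-consistent equation $s^2+zs+1=r$, and a bootstrap down a fine $\eta$-grid using the $n^2$-Lipschitz continuity of $G$ (the paper does exactly this via Lemma \ref{lem:newerror} and Proposition \ref{prop:local}, with the indicator $\phi=\mathbf{1}(\Gamma\le 2)$ carrying the bootstrap hypothesis). However, there is one concrete misstep in your reduction of the quadratic-form term. You propose to control $r^{\max\{2/\alpha,2\}}\|A\|_{\max}$ with the deterministic bound $\|G^{(i)}\|_{\max}\le 1/\eta$ and claim that $\eta>1/n$ together with $r^{\max\{1/\alpha,1\}}\le\sqrt{np}$ closes the estimate. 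This fails: with that bound the term is $r^{\max\{2/\alpha,2\}}/(np\eta)$, which at the relevant scales (e.g.\ $r=\log n$, $\eta\asymp\log^2 n/n$, $p\ll 1$) is not dominated by $\zeta^2=12C_\alpha K^2\max\{r^{\max\{1/\alpha,1\}}/\sqrt{np},\,r/\sqrt{n\eta}\}$; a quick check against either branch of the maximum shows the required inequality reduces to $r^{\max\{2/\alpha,2\}-1}\lesssim p\sqrt{n\eta}$ or $1/\eta\lesssim 1$, neither of which holds. What is actually needed, and what the paper uses, is the bound $\phi^{(i)}\|G_0^{(i)}\|_{\max}\le 2$ on the bootstrap event (control of \emph{all} entries of the minor resolvent, not just the diagonal ones you mention), after which $r^{\max\{2/\alpha,2\}}/(np)\le r^{\max\{1/\alpha,1\}}/\sqrt{np}$ follows from the hypothesis $r^{\max\{1/\alpha,1\}}\le\sqrt{np}$; that is the true role of this assumption, and it is why the estimates in Lemma \ref{lem:newerror} are stated with the factor $\phi$.

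Two smaller deviations worth noting, neither fatal. For the off-diagonal entries the paper does not use a decoupled bilinear form with \eqref{eq:quad}; it uses the simpler one-minor identity $G_{ij}=-G_{ii}\sum_{k}^{(i)}H_{ik}G^{(i)}_{kj}$ and the \emph{linear} inequality \eqref{eq:linear} (your bilinear route needs a decoupled variant of \eqref{eq:quad} that is not stated in the paper). Also, the tail $t^{-2r}$ does not come from ``$L_r$ hence $L_{2r}$'' (an $L_r$ bound gives no $L_{2r}$ bound); in the paper it comes from applying Markov at level $r$ to quantities whose $L_r$ norm is $\le\zeta^2$ with threshold $\zeta^2 t^2$, and the factor $n^6$ arises as $\mathcal{K}\cdot n^3\le n^3\cdot n^3$ from the union bound over the $\eta$-lattice and over indices, handled by the two-branch ($m$ versus $\tilde m$) dichotomy in Proposition \ref{prop:local} that your sketch leaves vague.
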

Let us assume Theorem \ref{thm:localmain} holds and prove Theorem \ref{thm:local}. Given $D>0$ and $\delta\in (0,1)$, we take $r=\log n$, $t=\exp(\frac{D+7}{2})$. Let $ \mathtt{f}(\delta):= \max\{1, \frac{(20\delta)^4}{12^4}\}.$ Then the conditions of Theorem \ref{thm:localmain} are satisfied provided that 
\[ p\ge \frac{12^6 e^{2(D+7)} \mathtt{f}(\delta)}{\delta^4} \cdot \frac{C_\alpha^2 K^4 (\log n)^{\max\{\frac{2}{\alpha},2 \}}}{n}\]
and
\[ \eta \ge \frac{12^6 e^{2(D+7)} \mathtt{f}(\delta)}{\delta^4} \cdot \frac{C_\alpha^2 K^4 (\log n)^2}{n}. \]
Under these conditions, we have $20 t\zeta \le 1$, $12 t\zeta \le \delta$ and $3n^6 t^{-2r} \le n^{-D}$. Setting 
\begin{align}\label{eq:L}
L(\delta, D):=\frac{12^6 e^{2(D+7)} \mathtt{f}(\delta)}{\delta^4},
\end{align}
we complete the proof of Theorem \ref{thm:local}.

\medskip

The remainder of this section is devoted to the proof of Theorem \ref{thm:localmain}. We begin by introducing necessary notations and preliminary results.
Following the notations in \cite{HKM19}, for any $k\in [n]$, we denote by $$H^{(k)}:= (H_{ij})_{i,j\in [n]\setminus \{k\} }$$ the $(n-1)\times (n-1)$ matrix obtained by deleting the $k$-th row and column from $H$. The Green function of $H^{(k)}$ is defined as $$G^{(k)}(z) = (H^{(k)}-z)^{-1}. $$ Moreover, we use the shorthand notation $\sum_{i}^{(k)} := \sum_{i: i\neq k}$. The Stieltjes transform of the ESD $\mu_n$ is given by $$s(z):=\frac{1}{n}\sum_{i=1}^n G_{ii}(z).$$
Define the random $z$-dependent error parameter 
$$\Gamma(z) := \max\left\{ \max_{i,j}|G_{ij}(z)|, \max_{i,j\neq k} |G_{ij}^{(k)}(z)|\right\}$$ and the $z$-dependent indicator function 
\begin{align}\label{def:phi}
\phi(z):= \mathbf{1}(\Gamma(z)\le 2).
\end{align}
For brevity, we omit the spectral parameter $z$ when the context is clear. Using Schur's complement (see Lemma 5.2 from \cite{BGK16}), one has 
\begin{align}
\frac{1}{G_{ii}} = -z - s + Y_i,
\end{align}
where 
\begin{align}\label{eq:Yi}
Y_i := H_{ii} + \frac{1}{n}\sum_k \frac{G_{ki} G_{ik}}{G_{ii}} - \sum_{k\neq l}^{(i)} H_{ik} G_{kl}^{(i)} H_{li} - \sum_{k}^{(i)} (H_{ik}^2 - n^{-1}) G_{kk}^{(i)}.
\end{align}
An important identity, known as the Ward identity, is frequently used in our estimates (see (4.4) from \cite{HKM19}):
\begin{align}\label{eq:ward}
\sum_{j} |G_{ij}|^2 = \frac{\Im G_{ii}}{\eta}.
\end{align}

The proof of Theorem \ref{thm:localmain} follows from a standard stochastic continuity argument (see \cite{HKM19, BGK16}) combined with a bootstrapping technique, proceeding from large scales ($\eta =1$) down to small scales ($\eta \approx \log n/n$). The new technical inputs are the following error estimates under the event $\phi=1$, which are proved using \eqref{eq:linear}, \eqref{eq:linearsq} and \eqref{eq:quad}.
\begin{lemma}[Main estimates]\label{lem:newerror}
Assume $r \ge 2$ and $r^{\max\{\frac{1}{\alpha},1 \}} \le \sqrt{np}$. Then we have
\begin{align}
&\max_{i\neq j} \|\phi G_{ij}\|_{L_r} \le 4 C_\alpha K \max\left\{ \frac{r^{\max\{\frac{1}{\alpha},1\}}}{\sqrt{np}}, \frac{\sqrt{r}}{\sqrt{n \eta}} \right\},\label{eq:lemkey1}\\
&\max_{i,j\neq k} \|\phi(G_{ij} - G_{ij}^{(k)}) \|_{L_r} \le 4 C_\alpha K \max\left\{ \frac{r^{\max\{\frac{1}{\alpha},1\}}}{\sqrt{np}}, \frac{\sqrt{r}}{\sqrt{n \eta}} \right\},\label{eq:lemkey2}\\
&\max_{i} \|\phi Y_i G_{ii} \|_{L_r} \le 12 C_\alpha K^2 \max\left\{ \frac{r^{\max\{\frac{1}{\alpha},1 \}}}{\sqrt{np}}, \frac{r}{\sqrt{n\eta}} \right\}.\label{eq:lemkey3}
\end{align}
\end{lemma}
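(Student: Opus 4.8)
The plan is to estimate each of the three quantities by conditioning on a sub-sigma-algebra that makes the relevant form linear or quadratic in a collection of independent sparse $\alpha$-subexponential variables, then apply the complex-coefficient moment bounds \eqref{eq:linear}--\eqref{eq:quad}, and finally unwind the conditioning with the triangle inequality for $L_r$ norms. The indicator $\phi$ is the device that lets us treat the Green-function entries appearing as coefficients as deterministic bounded quantities (bounded by $2$, or by $2/\eta$ after the Ward identity \eqref{eq:ward}) on the event where the estimate is needed; off that event $\phi=0$ and the bound is trivial.

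For \eqref{eq:lemkey1}: write $G_{ij}$ ($i\neq j$) via the standard resolvent expansion in terms of the $i$-th (or $j$-th) row of $H$ and the minor $G^{(i)}$, namely $G_{ij} = -G_{ii}\sum_{k}^{(i)} H_{ik} G^{(i)}_{kj}$. Condition on $H^{(i)}$ (hence on $G^{(i)}$ and, through a further step, control $G_{ii}$ by $\phi$). Then $\sum_k^{(i)} H_{ik} G^{(i)}_{kj}$ is a linear form $\sum_k a_k X_k$ with $X_k = \widetilde H_{ik}/\sqrt{np}$ independent centered sparse $\alpha$-subexponential (so $K$ is replaced by $K/\sqrt{np}$, $p_k = p$), and coefficients $a_k = G^{(i)}_{kj}$. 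On $\phi=1$ we have $\|a\|_\infty \le 2$ and, by the Ward identity for $H^{(i)}$, $\|a\|_2^2 = \sum_k |G^{(i)}_{kj}|^2 = \Im G^{(i)}_{jj}/\eta \le 2/\eta$. Plugging into \eqref{eq:linear} gives $\|\phi\sum_k^{(i)}H_{ik}G^{(i)}_{kj}\|_{L_{r}} \lesssim C_\alpha K \max\{ r^{\max\{1/\alpha,1\}}/\sqrt{np},\ \sqrt{r}/\sqrt{n\eta}\}$ after the $\sqrt{np}$ rescaling and the $p\cdot(2)^2$ variance term; multiplying by $|G_{ii}|\le 2$ (absorbed into the constant $4$) yields \eqref{eq:lemkey1}. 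Estimate \eqref{eq:lemkey2} is analogous: the resolvent identity gives $G_{ij}-G^{(k)}_{ij} = G_{ik}G_{kj}/G_{kk}$, and on $\phi=1$ one factor is bounded by $2$ while the other is an entry already controlled by \eqref{eq:lemkey1}-type reasoning — more precisely one keeps $G_{ik}G_{kj}/G_{kk}$ and bounds it by $\tfrac12\phi(|G_{ik}|^2+|G_{kj}|^2)\cdot 2$ or directly by Cauchy--Schwarz in $L_r$ together with \eqref{eq:lemkey1}, again landing on the same right-hand side (the square in $|G_{ik}|^2$ is harmless because one factor is just $\le 2$).

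For \eqref{eq:lemkey3}: bound $|\phi Y_i G_{ii}| \le 2\phi|Y_i|$ and control each of the four terms of $Y_i$ in \eqref{eq:Yi} separately, using $\|Y_i G_{ii}\|_{L_r}\le 2\sum(\text{four pieces})$. The term $H_{ii}$ has $L_r$ norm $\le K r^{1/\alpha}/\sqrt{np}$ trivially. The term $\tfrac1n\sum_k G_{ki}G_{ik}/G_{ii}$ is, on $\phi=1$, at most $\tfrac{2}{n}\sum_k|G_{ik}|^2 = \tfrac{2}{n}\,\Im G_{ii}/\eta \le \tfrac{4}{n\eta}$ by Ward, which is deterministic and within the claimed bound. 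The main pieces are the quadratic form $\sum_{k\neq l}^{(i)} H_{ik}G^{(i)}_{kl}H_{li}$ and the diagonal-correction linear form $\sum_k^{(i)}(H_{ik}^2 - n^{-1})G^{(i)}_{kk}$. Condition on $H^{(i)}$: the first is $X^T A X$ with diagonal-free matrix $A = (G^{(i)}_{kl})$, and on $\phi=1$ we have $\|A\|_{\max}\le 2$, $\|A\|_{1,\infty}\le \max_k\sum_l|G^{(i)}_{kl}| \le \sqrt{n}\,(\sum_l|G^{(i)}_{kl}|^2)^{1/2} \le \sqrt{n}\cdot\sqrt{2/\eta} = \sqrt{2n/\eta}$, and $\|A\|_F^2 = \sum_{k,l}|G^{(i)}_{kl}|^2 = \sum_k \Im G^{(i)}_{kk}/\eta \le 2n/\eta$; feeding these into \eqref{eq:quad} (with $K\mapsto K/\sqrt{np}$, so the three terms carry $1/(np)$, $p/(np)=1/n$, $\sqrt{p}/(np)=1/(n\sqrt{p})$ weights) gives after simplification $\lesssim C_\alpha K^2\max\{r^{\max\{2/\alpha,2\}}/(np),\ r/(n\eta)\cdot\sqrt{np}\cdot\ldots\}$ — one checks each regime collapses into $C_\alpha K^2\max\{r^{\max\{1/\alpha,1\}}/\sqrt{np},\ r/\sqrt{n\eta}\}$ using $r^{\max\{1/\alpha,1\}}\le\sqrt{np}$ and $\eta>n^{-1}$. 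The second (linear in $X_k^2-p$ after rescaling) is handled by \eqref{eq:linearsq} with coefficients $G^{(i)}_{kk}$, $\|a\|_\infty\le 2$, $\|a\|_2^2 = \sum_k|G^{(i)}_{kk}|^2\le \ldots$ via Ward, landing in the same bound. Summing the four contributions and absorbing all absolute constants into the factor $12$ completes \eqref{eq:lemkey3}.

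The main obstacle is the bookkeeping in \eqref{eq:lemkey3}: one must verify that every one of the three regimes in the quadratic bound \eqref{eq:quad}, after the $K\mapsto K/\sqrt{np}$ rescaling and after inserting the $\eta$-dependent norms of $A$ coming from the Ward identity, is dominated by the two-regime right-hand side $\max\{r^{\max\{1/\alpha,1\}}/\sqrt{np},\,r/\sqrt{n\eta}\}$. This uses crucially the hypothesis $r^{\max\{1/\alpha,1\}}\le\sqrt{np}$ (to beat the $\|A\|_{\max}$ term against the $\|A\|_{1,\infty}$ term) together with $n\eta>1$, and requires splitting into the cases $\alpha\ge1$ and $\alpha<1$ because of the $\max\{2/\alpha,2\}$ exponent. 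A secondary subtlety is that $\phi$ is not $H^{(i)}$-measurable, so one must phrase the conditioning carefully: e.g. use that $\phi\le \phi^{(i)}:=\mathbf 1(\max_{k,l\neq i}|G^{(i)}_{kl}|\le 2)$ up to the resolvent perturbation, or simply note that $\phi\,|G_{ij}|\le 2$ pointwise and that $\phi\le\mathbf 1(\Gamma\le2)$ controls all minors simultaneously, so that the coefficient bounds used above hold on the event $\{\phi=1\}$ and the conditional moment estimates apply after first conditioning on $H^{(i)}$ and then multiplying by the $\phi$-controlled prefactors.
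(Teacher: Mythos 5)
Your overall strategy is the paper's: condition on the minor $H^{(i)}$ (resp.\ $H^{(k)}$), use $\phi\le\phi^{(i)}$ so the coefficient bounds hold, apply the complex-coefficient moment estimates \eqref{eq:linear}, \eqref{eq:linearsq}, \eqref{eq:quad} with the Ward identity \eqref{eq:ward} supplying $\|a\|_2$, $\|A\|_{1,\infty}$, $\|A\|_F$, and then collapse the resulting regimes using $r^{\max\{1/\alpha,1\}}\le\sqrt{np}$ and $n\eta\ge 1$. Your treatment of \eqref{eq:lemkey1}, of the quadratic term $\sum_{k\neq l}^{(i)}H_{ik}G^{(i)}_{kl}H_{li}$, and of the term $\sum_k^{(i)}(H_{ik}^2-n^{-1})G^{(i)}_{kk}$ matches the paper's proof.

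There is, however, a concrete flaw in two places: you use a \emph{lower} bound on diagonal resolvent entries that $\phi$ does not provide. Since $\phi=\mathbf{1}(\Gamma\le 2)$ encodes only upper bounds, you may use $|G_{kk}|\le 2$ on $\{\phi=1\}$, but not $1/|G_{kk}|\le 2$. This breaks (i) your bound for \eqref{eq:lemkey2}, where you estimate $G_{ij}-G^{(k)}_{ij}=G_{ik}G_{kj}/G_{kk}$ by $\tfrac12\phi(|G_{ik}|^2+|G_{kj}|^2)\cdot 2$, or by Cauchy--Schwarz in $L_r$ with \eqref{eq:lemkey1} (which would in any case require the moment bound at level $2r$ and still leaves the factor $1/G_{kk}$ uncontrolled); and (ii) your step $\phi\,\big|\tfrac1n\sum_k G_{ki}G_{ik}/G_{ii}\big|\le\tfrac2n\sum_k|G_{ik}|^2$, which presumes $|G_{ii}|\ge 1/2$. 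Both are fixable along the lines the paper takes: for \eqref{eq:lemkey2}, substitute Schur's formula $G_{kj}=-G_{kk}\sum_{l}^{(k)}H_{kl}G^{(k)}_{lj}$, so the denominator cancels and $G_{ij}-G^{(k)}_{ij}=-G_{ik}\sum_{l}^{(k)}H_{kl}G^{(k)}_{lj}$; then bound $|G_{ik}|\le 2$ on $\{\phi=1\}$ and apply \eqref{eq:linear} conditionally on $H^{(k)}$ exactly as in \eqref{eq:lemkey1}. For the other term, keep the product $Y_iG_{ii}$ together as in \eqref{eq:YiG}, so the term in question is $\tfrac1n\sum_k G_{ki}G_{ik}$ with no denominator; alternatively note that \eqref{eq:ward} gives $\sum_k|G_{ik}|^2=\Im G_{ii}/\eta\le |G_{ii}|/\eta$, so the factor $1/|G_{ii}|$ cancels and the term is at most $1/(n\eta)$ deterministically. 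With these repairs your argument coincides with the paper's proof.
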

\begin{proof}[Proof of Lemma \ref{lem:newerror}] We define $$\phi^{(i)} :=\mathbf{1}(\max_{k,l\neq i} |G_{kl}^{(i)}| \le 2)$$ and observe that $\phi \le \phi^{(i)}$. Due to the independence of $H^{(i)}$ and the $i$-th row and column of $H$, let $\|\cdot\|_{L_r | H^{(i)}}$ denote the conditional $L_r$ norm with respect to the conditional expectation $\E(\cdot | H^{(i)})$. 

We begin with the proof of \eqref{eq:lemkey1}. By Schur's complement formula (see also Lemma 4.2 from \cite{HKM19}), for $i\neq j$, we have
\[ G_{ij} = -G_{ii} \sum_{k}^{(i)} H_{ik} G_{kj}^{(i)}.\]
Using the nesting property of conditional $L_r$ norms, we obtain
\[\|\phi G_{ij}\|_{L_r}  \le 2 \left\| \phi^{(i)} \Big\| \sum_{k}^{(i)} H_{ik} G_{kj}^{(i)} \Big\|_{L_r | H^{(i)}} \right\|_{L_r}.\]
By \eqref{eq:linear}, we derive
\begin{align}
\phi^{(i)} \Big\| \sum_{k}^{(i)} H_{ik} G_{kj}^{(i)} \Big\|_{L_r | H^{(i)}} &= \frac{1}{\sqrt{np}} \phi^{(i)} \Big\| \sum_{k}^{(i)} \widetilde{H}_{ik} G_{kj}^{(i)} \Big\|_{L_r | H^{(i)}}\\
&\le \frac{C_\alpha K}{\sqrt{np}}\phi^{(i)} \max \left\{ r^{\max\{\frac{1}{\alpha},1\}} \max_{k\neq i}|G_{kj}^{(i)}|, \sqrt{rp} \sqrt{\sum_k^{(i)} |G_{kj}^{(i)}|^2}\right\}\\
&\le 2\frac{C_\alpha K}{\sqrt{np}} \max \left\{r^{\max\{\frac{1}{\alpha},1\}}, \frac{\sqrt{r p}}{\sqrt{\eta}} \right\},
\end{align}
where in the last inequality, we used \eqref{eq:ward} to get $\sum_k^{(i)} |G_{kj}^{(i)}|^2 = {\Im G_{jj}^{(i)}}/{\eta}.$ It follows that
\[\|\phi G_{ij}\|_{L_r}  \le 4 C_\alpha K \max\left\{ \frac{r^{\max\{\frac{1}{\alpha},1\}}}{\sqrt{np}}, \frac{\sqrt{r}}{\sqrt{n\eta}} \right\}\]
and hence \eqref{eq:lemkey1} is proved.

The proof of \eqref{eq:lemkey2} follows analogously by using the identity (see Lemma 4.2 in \cite{HKM19}) 
\[ \|\phi (G_{ij} - G_{ij}^{(k)})\|_{L_r} =\|\phi G_{ik} \sum_{l}^{(k)} H_{kl} G_{lj}^{(k)}\|_{L_r} \] and applying \eqref{eq:linear}. We omit the details.

To prove \eqref{eq:lemkey3}, we start from the decomposition \eqref{eq:Yi}, which gives
\begin{align}\label{eq:YiG}
Y_i G_{ii} =G_{ii} H_{ii}  + \frac{1}{n}\sum_k G_{ki} G_{ik} - G_{ii}\sum_{k\neq l}^{(i)} H_{ik} G_{kl}^{(i)} H_{li} - G_{ii}\sum_{k}^{(i)} (H_{ik}^2 - n^{-1}) G_{kk}^{(i)}.
\end{align}
We proceed by bounding each term in this expression separately. From the definition of $H_{ii}$, we first have
\[ \|\phi G_{ii} H_{ii}\|_{L_r} \le \frac{2}{\sqrt{np}}\|\widetilde{H}_{ii}\|_{L_r} \le \frac{2K r^{\frac{1}{\alpha}}}{\sqrt{np}}.  \]
Next, by Ward identity \eqref{eq:ward}, we obtain
\[ \phi\frac{1}{n}\left|\sum_k G_{ki} G_{ik}\right| \le \phi \frac{1}{n} \sum_k |G_{ik}|^2 = \phi \frac{\Im G_{ii}}{n\eta} \le \frac{2}{n\eta}. \]
For the fourth term on the right-hand side of \eqref{eq:YiG}, we use the nesting property of conditional $L_r$ norms to derive
\begin{align}
\Big\|\phi G_{ii}\sum_{k\neq l}^{(i)} H_{ik} G_{kl}^{(i)} H_{li}\Big\|_{L_r} &\le 2 \left\| \phi^{(i)}\Big\|\sum_{k\neq l}^{(i)} H_{ik} G_{kl}^{(i)} H_{li} \Big\|_{L_r | H^{(i)}}\right\|_{L_r}\\
&=\frac{2}{np} \left\| \phi^{(i)}\Big\|\sum_{k\neq l}^{(i)} \widetilde{H}_{ik} G_{kl}^{(i)} \widetilde{H}_{li} \Big\|_{L_r | H^{(i)}}\right\|_{L_r}.
\end{align}
Applying \eqref{eq:quad}, we further obtain
\begin{align}
&\phi^{(i)}\Big\|\sum_{k\neq l}^{(i)} \widetilde{H}_{ik} G_{kl}^{(i)} \widetilde{H}_{li} \Big\|_{L_r | H^{(i)}}\\
&\qquad\le \phi^{(i)} C_\alpha K^2 \max\left\{ r^{\max\{\frac{2}{\alpha}, 2\} } \|G_0^{(i)}\|_{\max}, r p \|G_0^{(i)}\|_{1,\infty} , \sqrt{r} p \|G_0^{(i)}\|_F \right\},
\end{align}
where the matrix $G_0^{(i)}$ is obtained from $G^{(i)}$ by setting all diagonal entries to zero. Observe that $\phi^{(i)} \|G_0^{(i)}\|_{\max} \le 2$ and by Cauchy-Schwarz inequality and \eqref{eq:ward}, we have 
\begin{align}
\phi^{(i)}\|G_0^{(i)}\|_{1,\infty} &=\phi^{(i)}\max_{k\neq i} \sum_j^{(i)} |G_{kj}^{(i)}| \\
&\le \phi^{(i)} \sqrt{n}\max_{k\neq i} \sqrt{\sum_j^{(i)} |G_{kj}^{(i)}|^2}=\phi^{(i)} \sqrt{n}\max_{k\neq i} \sqrt{\frac{\Im G_{kk}^{(i)}}{\eta}} \le 2\frac{\sqrt{n}}{\sqrt{\eta}}.
\end{align}
It follow from  \eqref{eq:ward} that
\begin{align}
\phi^{(i)}\|G_0^{(i)}\|_F = \phi^{(i)} \sqrt{\sum_{k\neq j}^{(i)} |G_{kj}^{(i)}|^2 } \le \phi^{(i)} \sqrt{\sum_k^{(i)} \frac{\Im G_{kk}^{(i)}}{\eta}} \le 2 \frac{\sqrt n}{\sqrt{\eta}}.
\end{align}
Hence, we arrive at
\begin{align}
\Big\|\phi G_{ii}\sum_{k\neq l}^{(i)} H_{ik} G_{kl}^{(i)} H_{li}\Big\|_{L_r} 
\le  4 C_\alpha K^2 \max\left\{ \frac{r^{\max\{\frac{2}{\alpha}, 2\} } }{np}, \frac{r}{\sqrt{n\eta}} \right\}.
\end{align}
For the third term on the right-hand side of \eqref{eq:YiG}, similar calculation using \eqref{eq:linearsq} yields 
\begin{align}
\Big\|\phi G_{ii}\sum_{k}^{(i)} (H_{ik}^2 - n^{-1}) G_{kk}^{(i)} \Big\|_{L_r} &\le \frac{2}{np} \left\| \Big\| \phi^{(i)} \sum_{k}^{(i)} (\widetilde{H}_{ik}^2 - p ) G_{kk}^{(i)} \Big\|_{L_r | H^{(i)}} \right\|_{L_r}\\
&\le C_\alpha K^2 \frac{4}{np}\max \left\{ r^{\max\{\frac{2}{\alpha},1\}}, \sqrt{r}  \sqrt{np} \right\}\\
&= 4 C_\alpha K^2 \max\left\{\frac{r^{\max\{\frac{2}{\alpha},1\}}}{np}, \frac{\sqrt{r}}{\sqrt{np}} \right\}.
\end{align}
Combining the above estimates, we obtain
\begin{align}
\max_{i} \|\phi Y_i G_{ii} \|_{L_r} \le &\frac{2K r^{\frac{1}{\alpha}}}{\sqrt{np}} + \frac{2}{n\eta} + 4 C_\alpha K^2 \max\left\{ \frac{r^{\max\{\frac{2}{\alpha}, 2\} } }{np}, \frac{r}{\sqrt{n\eta}} \right\}\\
&+ 4 C_\alpha K^2 \max\left\{\frac{r^{\max\{\frac{2}{\alpha},1\}}}{np}, \frac{\sqrt{r}}{\sqrt{np}} \right\}.
\end{align}
By our supposition $r^{\max\{\frac{1}{\alpha},1 \}} \le \sqrt{np}$, we bound $$\frac{r^{\max\{\frac{2}{\alpha}, 2\} } }{np} \le \frac{r^{\max\{\frac{1}{\alpha}, 1\} } }{\sqrt{np}}.$$
Since $r\ge 2$, $C_\alpha\ge 1$, $K\ge 1$ and $n\eta \ge 1$, 
we obtain a simplified bound  
\begin{align}
\max_{i} \|\phi Y_i G_{ii} \|_{L_r} \le 12 C_\alpha K^2 \max\left\{ \frac{r^{\max\{\frac{1}{\alpha}, 1\} } }{\sqrt{np}}, \frac{r}{\sqrt{n\eta}} \right\}
\end{align}
and this completes the proof of \eqref{eq:lemkey3}.
\end{proof}

Now we proceed with the proof of Theorem \ref{thm:localmain}. Since $G_{ij}$, $s$, $m$ and $\zeta$ are $n^2$-Lipschitz on $\mathbf S$, it suffices to prove the following result for the spectral parameter $z = E + \ii \eta$ (with fixed $E$) being in the lattice set $\{z_0, z_1,\ldots, z_{\mathcal K} \}$, where $z_k:= E + \ii \eta_k$ with $\eta_k:= 1- k n^{-3}$. Here $\mathcal K:=\max\{k\in \mathbb N : 1- k n^{-3} \ge n^{-1}\}$ and $\mathcal K\le n^3 -n^2$. Recall $\zeta=\zeta(n,r,p,\eta,\alpha)$ from \eqref{eq:zeta}. We denote $\zeta(n,r,p,\eta_k,\alpha):= \zeta_k$ below. 
\begin{prop}\label{prop:local}
Assume $n\ge 10$, $r\ge 2$ and $r^{\max\{\frac{1}{\alpha},1 \}} \le \sqrt{np}$. For any $t\ge 1$ satisfying $20\zeta_k t \le 1$ for every $k=0,1,\ldots,\mathcal K$, we have
\begin{align}\label{localk}
\Prob\left(\max_{i,j}|G_{ij}(z_k) - m(z_k) \delta_{ij}| > 10 \zeta_k t \right) \le 3 k n^3 t^{-2r}.
\end{align}
\end{prop}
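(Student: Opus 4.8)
The plan is to prove Proposition~\ref{prop:local} by induction on $k$, running the standard stochastic continuity (bootstrapping) argument from the large scale $\eta_0=1$ down to small scales. The base case $k=0$ is handled by a direct estimate: at $\eta=1$ one has $\|G\|\le 1$ deterministically, so $\Gamma(z_0)\le 1\le 2$ and the indicator $\phi(z_0)=1$ automatically; then the self-consistent equation $G_{ii}^{-1}=-z-s+Y_i$ together with the main estimates of Lemma~\ref{lem:newerror} (which apply unconditionally here since $\phi=1$) and the stability of the equation $m+m^{-1}+z=0$ give the bound $\max_{i,j}|G_{ij}(z_0)-m(z_0)\delta_{ij}|\le 10\zeta_0 t$ on an event of probability at least $1-3n^3 t^{-2r}$ via Markov's inequality applied to the $L_r$ moment bounds.

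For the inductive step, I would assume~\eqref{localk} holds at level $k-1$ and prove it at level $k$. The key point is a continuity/gap argument: since $|\eta_k-\eta_{k-1}|=n^{-3}$ and all quantities are $n^2$-Lipschitz on $\mathbf S$, on the event where the bound holds at $z_{k-1}$ we get $\max_{i,j}|G_{ij}(z_k)-m(z_k)\delta_{ij}|\le 10\zeta_{k-1}t + n^2\cdot n^{-3}\le 11\zeta_k t$ (using $\zeta_k$ does not decrease too fast and $20\zeta_k t\le 1$ to absorb the $n^{-1}$ term), which in particular forces $\Gamma(z_k)\le 2$, i.e.\ $\phi(z_k)=1$. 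This is where $\phi$ enters: the high-probability control at the previous scale \emph{activates} the indicator at the current scale. Once $\phi(z_k)=1$ is known to hold with high probability, the estimates~\eqref{eq:lemkey1}--\eqref{eq:lemkey3} become genuine $L_r$ bounds on $G_{ij}$, $G_{ij}-G_{ij}^{(k)}$ and $Y_iG_{ii}$, and a union bound plus Markov gives $\max_i|\phi Y_i|\le \zeta_k^2 t$ (say) and $\max_{i\ne j}|\phi G_{ij}|\le \zeta_k t$ off an event of probability $\lesssim n^2 t^{-2r}\cdot(\text{polylog})$; here I must be careful to account for the $n^6$ versus $3kn^3$ bookkeeping, summing the $k$ contributions to reach $3kn^3t^{-2r}$. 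Then I feed $|Y_i|\le \zeta_k^2 t$ into $G_{ii}^{-1}=-z-s+Y_i$ and invoke the stability of the self-consistent equation to upgrade the a priori bound $11\zeta_k t$ to the desired $10\zeta_k t$; the improvement from $11$ to $10$ is the essence of the bootstrap and works because the error produced by $Y_i$ is quadratically small in $\zeta_k t$, hence negligible compared to the linear-in-$\zeta_k t$ slack, once $20\zeta_k t\le 1$.

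The step I expect to be the main obstacle is the stability analysis of the self-consistent equation and the careful conversion of the $Y_i$ and off-diagonal estimates into the improved bound $10\zeta_k t$ with the correct constants. Concretely: from $G_{ii}^{-1}=-z-s+Y_i$ and $\frac1n\sum_i G_{ii}=s$ one derives a perturbed equation for $s$ of the form $s+\frac1s+z=\mathcal E$ with $|\mathcal E|\lesssim \max_i|Y_i|+(\max_{i\ne j}|G_{ij}|)^2$, and one must show, using $\Im z=\eta$ and the a priori closeness $|s-m|\le 11\zeta_k t$ combined with $20\zeta_k t\le1$, that this forces $|s-m|\lesssim \mathcal E\lesssim \zeta_k^2 t^2 \le \zeta_k t/2$; then reinserting into $G_{ii}^{-1}=-z-s+Y_i$ and expanding gives $|G_{ii}-m|\le 10\zeta_k t$, while the off-diagonal bound $|G_{ij}|\le\zeta_k t$ is already in hand from Lemma~\ref{lem:newerror}. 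Managing the interplay between the indicator $\phi$, the conditional-on-$H^{(i)}$ moment estimates, the union bound over $i,j$ and over the $k$ lattice points, and the Lipschitz reduction so that everything closes with the stated probability $3kn^3t^{-2r}$ and slack factors (the $10,11,12,20$ constants) is the delicate bookkeeping; the probabilistic and algebraic inputs themselves are all supplied by Lemma~\ref{lem:newerror}, Markov's inequality, and the elementary stability of~\eqref{eq:m}.
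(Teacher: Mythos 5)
Your overall skeleton (induction in $k$, trivial bound at $\eta_0=1$ to activate $\phi$, Lemma~\ref{lem:newerror} plus Markov and a union bound, then the self-consistent equation) matches the paper's strategy, but the step you yourself flag as the main obstacle — the stability analysis — is where the proposal has a genuine gap. The equation $s^2+zs+1=\mathcal E$ has \emph{two} roots $m(z)$ and $\tilde m(z)$, and the stability estimate available uniformly on $\mathbf S$ is only of square-root type: $\min\{|s-m|,|s-\tilde m|\}\le\sqrt{|\mathcal E|}$, which with $|\mathcal E|\le\zeta_k^2t^2$ gives $\zeta_k t$, not the "quadratically small" $\zeta_k^2t^2$ you claim (linear stability $|s-m|\lesssim|\mathcal E|$ fails near the spectral edges, where $|m-\tilde m|=\sqrt{z^2-4}$ can be arbitrarily small). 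Worse, your argument never selects the branch: an a priori bound $|s-m|\le 11\zeta_k t$ cannot distinguish $m$ from $\tilde m$ once $|m-\tilde m|\lesssim\zeta_k t$, and the $\Im\tilde m<-1$ argument that works at $\eta_0=1$ is unavailable at small $\eta$. The paper resolves exactly this by introducing the threshold index $\widetilde{\mathcal K}$ and splitting into two cases: for $k<\widetilde{\mathcal K}$ the roots are separated by more than $5\zeta_k t$, and Lipschitz continuity from the previous scale (where $s$ was within $\zeta_{k-1}t$ of $m$) pins $s$ to the $m$-branch; for $k\ge\widetilde{\mathcal K}$ the roots are within $5\zeta_k t$ of each other, so branch selection is unnecessary but the constants degrade — this is precisely where the factor $10$ in \eqref{localk} comes from (versus $2\zeta_k t$ in the well-separated regime). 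Your framing of the bootstrap as "upgrading $11\zeta_k t$ to $10\zeta_k t$ because the error is quadratically small" therefore does not close; the paper instead propagates the events $\Omega_k=\{|s(z_k)-m(z_k)|\le\zeta_k t\}$ and $\Xi_k=\{\Gamma(z_k)\le 3/2\}$, and the entrywise bound is deduced at each scale from these via $|G_{ii}-m|\le|(s-m)G_{ii}|+|Y_iG_{ii}|$.

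A secondary, fixable issue: your induction hypothesis is the entrywise bound \eqref{localk} at $z_{k-1}$, but $\Gamma$ also involves the minors $G^{(l)}_{ij}$, so the previous-scale entrywise bound on $G$ alone does not "force $\Gamma(z_k)\le 2$". The paper's induction carries $\Xi_{k-1}$ (which includes the minors) and re-establishes minor control at scale $k$ through \eqref{eq:lemkey2}; you would need to do the same, and your probability bookkeeping should then be organized around $\Prob(\Omega_{k-1}\cap\Xi_{k-1}\cap(\Omega_k\cap\Xi_k)^c)\le(2n^3+1)t^{-2r}$ and telescoping, rather than the vague "$\lesssim n^2t^{-2r}\cdot(\mathrm{polylog})$" accounting.
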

The proof of Proposition \ref{prop:local} follows the standard stochastic continuity argument as seen in \cite{BGK16, HKM19}. For completeness, we include the proof in Appendix \ref{app:local}.

The proof of Theorem \ref{thm:localmain} follows from Proposition \ref{prop:local}, the Lipschitz continuity of $G_{ij}$ and $m$, $\zeta$ is a decreasing function of $\eta$, and $\frac{1}{(n\eta)^2}\le \zeta t$, and a union bound on the lattice set, following the same arguments as in \cite{HKM19}. We omit the details.

\subsection{Proof of Theorem~\ref{thm:vec_norm}}

    Note that $\|BX\|^2 = X^TB^TBX$. To apply Theorem~\ref{cor:main} to matrix $A=B^TB$ and random vector $X$, note that $(B^TB)_{ij} = \langle B_i,B_j\rangle$ and the expectation can be computed as 
    \[\E \|BX\|^2 = \E X^TB^TBX = \sum_{i=1}^n (B^TB)_{ii} \E X_i^2 = p \tr(B^TB) = p\|B\|_F^2.\]
    Moreover, the relevant matrix norms can be expressed as
    \begin{align*}
        p^2\sum_{i\neq j} (B^TB)_{ij}^2 + p\sum_{i=1}^n (B^TB)_{ii}^2
        & \leq  ~  p^2\|B^TB\|_F^2 + p\sum_{i=1}^n \|B_i\|^4,\\
        \gamma_{1,\infty}(B^TB) & = \max_{1\leq i\leq n} \Big\{p\sum_{j\neq i} |\langle B_i,B_j\rangle|, \|B_i\|^2\Big\},\\
        \|B^T B\|_{\max} &= \max_{i,j} |\langle B_i,B_j\rangle|=\|B^T\|_{2,\infty}^2.
    \end{align*}
 Hence, applying the tail bound in Theorem~\ref{cor:main} with 
    \begin{align*}
        t =& \;\sqrt{s} \cdot K^2 \sqrt{p^2\|B^T B\|_F^2 + p\sum_{i}\|B_i\|^4}+ s \cdot K^2 \max_{1\leq i\leq n}\Big\{ p\sum_{j\neq i}|\langle B_i,B_j \rangle|,\|B_i\|^2 \Big\}\\
        & + s^{\frac{2}{\alpha}} \cdot K^2 \|B^T\|_{2,\infty}^2,
    \end{align*}
    we have 
    \[\Prob\left(|\|BX\|^2 -{p}\|B\|_F^2| > t \right) \leq C'\exp(-cs).\]

\subsection*{Acknowledgments}
We thank Fanny Augeri, Antti Knowles, Stanislav Minsker, Roman Vershynin, and Shuheng Zhou for helpful discussions.
K. Wang is partially supported by Hong Kong RGC grant GRF 16304222 and ECS 26304920. Y. Zhu was partially supported by  NSF-Simons Research Collaborations on the Mathematical and Scientific
Foundations of Deep Learning, an AMS-Simons Travel Grant, and the Simons Grant MPS-TSM-00013944. This material is based upon work supported by the Swedish Research Council under grant no. 2021-06594 while Y. Zhu was in residence at Institut Mittag-Leffler in Djursholm, Sweden during the Fall of 2024.

\bibliographystyle{plain}
\bibliography{EJP-Apr17-2025/HWineq}

\appendix
\section{Auxiliary results}\label{sec:appendix}
\subsection{Auxiliary lemmas}
\begin{lemma}[Gibbs' inequality \cite{mackay2003information}]
\label{lem:gibbs}
    Suppose that $(p_1,\dots, p_n), (q_1,\dots, q_n)$ are discrete probability measures, i.e., $0\leq p_i,q_i\leq 1$, $\sum_{i=1}^n p_i=\sum_{i=1}^n q_i=1$. Then 
    \begin{align}
        -\sum_{i=1}^n p_i\log (p_i) \leq -\sum_{i=1}^n p_i\log q_i.
    \end{align}
\end{lemma}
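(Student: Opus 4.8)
The plan is to deduce Gibbs' inequality from the elementary bound $\ln x \le x-1$, valid for all $x>0$. First I would note that the inequality is insensitive to the base of the logarithm: since $\log x = \ln x / \ln b$ for whatever base $b>1$ is in use and $\ln b>0$, it suffices to establish the claim with the natural logarithm $\ln$. I would also fix the standard convention $0\log 0 = 0$ and restrict attention to the index set $I := \{i : p_i > 0\}$, since indices with $p_i = 0$ contribute $0$ to both sides. If $q_i = 0$ for some $i \in I$, the right-hand side equals $+\infty$ and there is nothing to prove, so I may assume $q_i > 0$ for every $i \in I$.

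The key step is then the one-line computation
\[
\sum_{i \in I} p_i \ln\frac{q_i}{p_i} \;\le\; \sum_{i \in I} p_i\left(\frac{q_i}{p_i} - 1\right) \;=\; \sum_{i \in I} q_i - \sum_{i \in I} p_i \;\le\; 1 - 1 = 0,
\]
where the first inequality applies $\ln x \le x-1$ with $x = q_i/p_i$, the equality uses $\sum_{i\in I} p_i = 1$, and the last inequality uses $\sum_{i\in I} q_i \le \sum_{i=1}^n q_i = 1$. Rearranging, $\sum_{i\in I} p_i \ln p_i - \sum_{i\in I} p_i \ln q_i \ge 0$, which is exactly $-\sum_i p_i \ln p_i \le -\sum_i p_i \ln q_i$ after reinserting the vanishing $p_i=0$ terms. (Equivalently, and this is the variant I would likely present for brevity, one can apply Jensen's inequality to the convex function $-\ln$: $-\sum_{i\in I} p_i \ln(q_i/p_i) \ge -\ln\big(\sum_{i\in I} p_i\cdot q_i/p_i\big) = -\ln\big(\sum_{i\in I} q_i\big) \ge -\ln 1 = 0$.)

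There is no genuine obstacle here; the result is classical and the argument is a couple of lines. The only points that require a word of care are the degenerate cases $p_i = 0$ or $q_i = 0$ and the remark that the base of the logarithm is immaterial, both handled above.
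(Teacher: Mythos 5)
Your proof is correct, and in fact the paper does not supply a proof of this lemma at all --- it merely states it with a citation to MacKay's textbook, so there is no in-paper argument to compare against. Both routes you offer (the elementary bound $\ln x \le x - 1$ applied to $x = q_i/p_i$, and the Jensen variant using convexity of $-\ln$) are the standard textbook proofs; your handling of the degenerate cases $p_i = 0$, $q_i = 0$ and the remark on the base of the logarithm are exactly the right points to address, and the argument is complete.
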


\begin{lemma} 
    \label{lem:seq_prod_upper_bound}
    For positive $1\leq a\leq b$, there is \[a^a b^b < (a-1)^{(a-1)}(b+1)^{(b+1)}.\]
    In general, if we have positive integers $L$ and  $x_1, x_2,\dots, x_k\geq L\geq1$ with $x_1+\dots+x_k = S \geq kL$, then the maximal of $\prod_{i=1}^k x_i^{x_i}$ is taken when all but one $x_i$'s are $L$, i.e.,
    \[\prod_{i=1}^k x_i^{x_i} \leq L^{(k-1)L}(S-(k-1)L)^{(S-(k-1)L)}.\]
\end{lemma}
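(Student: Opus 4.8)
The plan is to reduce both assertions to a single convexity fact about $\phi(x):=x\log x$ (with the convention $\phi(0)=0$), namely that the forward difference $x\mapsto\phi(x+1)-\phi(x)$ is strictly increasing on $[0,\infty)$. This holds because $\phi'(t)=\log t+1$, so $\frac{d}{dx}\big[\phi(x+1)-\phi(x)\big]=\phi'(x+1)-\phi'(x)=\log\frac{x+1}{x}>0$ for $x>0$, and the difference is continuous on $[0,\infty)$; equivalently, $\phi(x+1)-\phi(x)=\int_x^{x+1}(\log t+1)\,dt$ with $\log t+1$ strictly increasing.

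\textbf{Step 1 (the two-variable inequality).} Taking logarithms, the claim $a^ab^b<(a-1)^{a-1}(b+1)^{b+1}$ for real $1\le a\le b$ is equivalent to $\phi(b+1)-\phi(b)>\phi(a)-\phi(a-1)$. Since $b\ge a>a-1\ge 0$, evaluating the strictly increasing function $x\mapsto\phi(x+1)-\phi(x)$ at the points $b$ and $a-1$ gives exactly this strict inequality. The only degenerate case is $a=1$: there the right-hand side is $\phi(1)-\phi(0)=0$ by the convention $0\log 0=0$, while the left-hand side is $\int_b^{b+1}(\log t+1)\,dt>0$.

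\textbf{Step 2 (the general inequality).} Fix positive integers $L\le x_1,\dots,x_k$ with $\sum_i x_i=S\ge kL$, and let $\mathcal F$ denote the (finite) set of all integer vectors satisfying these constraints. I will show $\sum_i\phi(x_i)$ attains its maximum on $\mathcal F$ at a permutation of $(L,\dots,L,S-(k-1)L)$ (which lies in $\mathcal F$ since $S-(k-1)L\ge L\ge 1$); exponentiating then yields the stated bound $\prod_i x_i^{x_i}\le L^{(k-1)L}(S-(k-1)L)^{S-(k-1)L}$. Suppose some $(x_1,\dots,x_k)\in\mathcal F$ has at least two coordinates exceeding $L$, say $L<x_i\le x_j$. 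Replacing $(x_i,x_j)$ by $(x_i-1,x_j+1)$ keeps the vector in $\mathcal F$ (since $x_i-1\ge L$), and by Step~1 with $a=x_i$, $b=x_j$ it strictly increases $\sum_i\phi(x_i)$. Since $\mathcal F$ is finite and the objective strictly increases at each such move, the process terminates after finitely many steps at a vector with at most one coordinate larger than $L$, i.e.\ a permutation of $(L,\dots,L,S-(k-1)L)$; this vector therefore realizes the maximum.

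\textbf{Main obstacle and remarks.} There is no genuine difficulty here; the points needing care are the degenerate case $a=1$ (handled by $0\log 0=0$) and the bookkeeping that the pairwise exchange preserves the lower bound $x_i\ge L$ and hence stays in $\mathcal F$. An alternative route to Step~2, valid for real coordinates as well, is to observe that $(L,\dots,L,S-(k-1)L)$ majorizes every point of the polytope $\{x\ge L\mathbf 1:\sum_i x_i=S\}$ (its largest entry dominates since the remaining coordinates sum to at least $(k-1)L$, and the corresponding partial-sum inequalities follow identically), together with the Schur-convexity of $x\mapsto\sum_i\phi(x_i)$ that comes from convexity of $\phi$; I will, however, present the self-contained exchange argument above.
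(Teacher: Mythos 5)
Your proposal is correct and follows essentially the same route as the paper: the two-variable inequality via monotonicity (your increasing forward difference of $x\log x$ is just the logarithm of the paper's increasing function $f(x)=(x+1)(1+\tfrac1x)^x$), followed by the same pairwise exchange argument for the general case (the paper phrases it as a contradiction at a maximizer, you as finite termination of strictly improving swaps). Your explicit handling of the degenerate case $a=1$ and the check that swaps stay feasible are minor refinements, not a different method.
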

\begin{proof}
    We first prove the first inequality, which is equivalent to show that 
    \[\frac{a^a}{(a-1)^{a-1}}< \frac{(b+1)^{b+1}}{b^b}.\]
    Note that the function $f(x) = (x+1)(1+\frac{1}{x})^{x}$ is strictly in creasing on $(0,+\infty)$. We know the inequality holds as $f(a-1)< f(b)$.

    In general, suppose $x_1,\dots,x_k$ maximize $\prod_{i=1}^k x_i^{x_i}$ and there are at least two $x_i, x_j, i\neq j$ such that $L< x_i \leq  x_j$. We replace $x_i, x_j$ with $x_i-1, x_j+1$. Then the constraints still hold, and by the first part of the lemma we obtain a larger value of $\prod_{i=1}^k x_i^{x_i}$, which leads to a contradiction. Therefore, the maximum is taken where all but one $x_i$'s are $L$.
\end{proof}

\subsection{The Lambert W function}\label{app:lambert}
For real numbers $x$ and $y$, the equation \[y e^y=x\] has real solutions for $y$ only when $x\ge -e^{-1}$. When $x\ge 0$, there exists a unique solution given by the principal branch of the Lambert W function, denoted as $y=W_0(x)$. For $-e^{-1}\le x<0$, there are exactly two solutions: the principal branch $y=W_0(x)$ and the lower branch $y=W_{-1}(x)$. The two branches of the Lambert W function are illustrated in Figure \ref{fig:lambert}.
We include a result about the Lambert W function that is used in our proof.
\begin{figure}[!ht]
 \begin{center}
   \includegraphics[width=7cm]{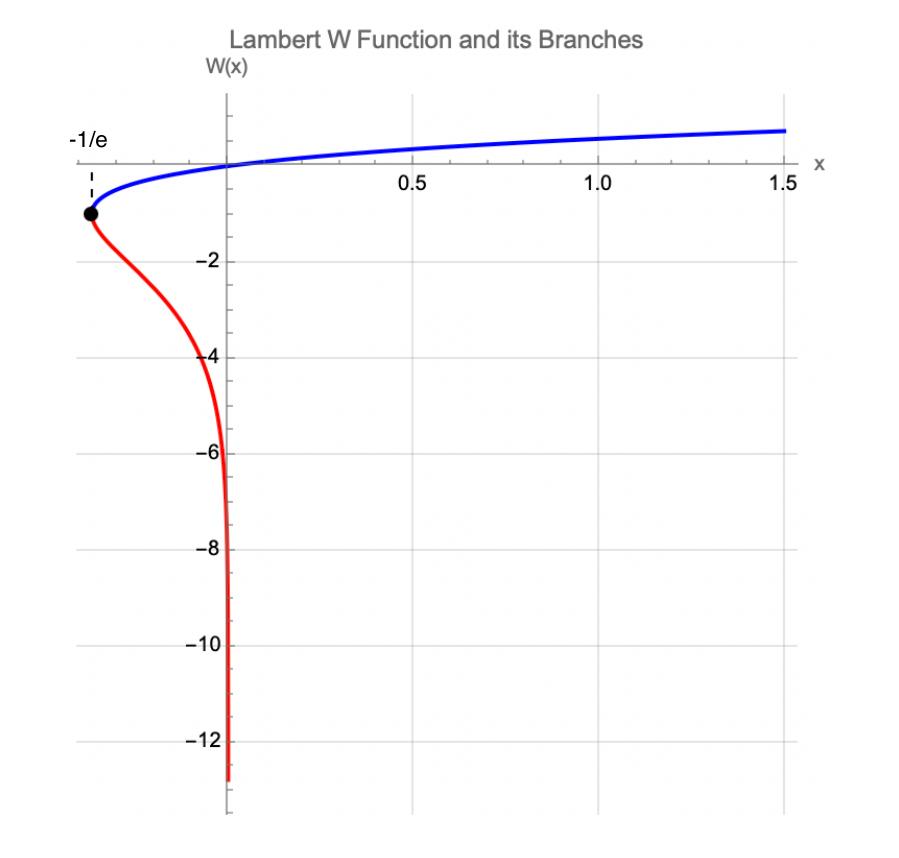}
   \caption{The blue curve is the principal branch $W_0(x)$ and the red curve is the lower branch $W_{-1}(x)$.}
   \label{fig:lambert}
 \end{center}
\end{figure}

\begin{prop}\label{prop:lambert}For positive $u$ and $L$, the inequality $e^u \le L u$ is solvable only when $L \ge e$ and the solution is 
    \[ -W_{0}(-L^{-1}) \le  u \le -W_{-1}(-L^{-1}).\]
\end{prop}
\begin{proof}The inequality $e^u \le L u$ is equivalent to
\begin{align}\label{eq:lambert}
(-u) e^{-u} \le - L^{-1}. 
\end{align}
This inequality \eqref{eq:lambert} is solvable only when $- L^{-1} \ge -e^{-1}$ since the minimum of $x e^x$ is $-e^{-1}$. Specially, if $L=e$, then $u=1$. For $L\ge e$, $-L^{-1} \in [-e^{-1},0)$ and from the definition and behavior of Lambert W function (see Figure \ref{fig:lambert}), we find that $W_{-1}(-L^{-1}) \le - u \le W_{0}(-L^{-1})$. This completes the proof.
\end{proof}

\section{Proofs of Theorem \ref{thm:delo} and Theorem \ref{thm:delo-new}}\label{app:delo-proof}
\subsection{Proof of Theorem \ref{thm:delo}}\label{app:delo}
We begin the proof of Theorem \ref{thm:delo} by showing that, with high probability, the eigenvalues of $H$ lie within the spectral domain $\mathbf S$. This is done by the spectral norm concentration inequality from \cite{latala2018dimension} with a truncation argument.

\begin{lemma}\label{lem:bdH}
Consider the matrix $H$ in \eqref{eq:H}.
  There is an absolute constant $C>0$ such that for any $D>0$, if $p\geq \frac{K^2e^{2(D+2)}(\log n)^{1+\frac{2}{\alpha}}}{n}$, with probability at least $1-2n^{-D}$,
    \begin{align}
        \|\widetilde{ H}\| \leq C(1+\sqrt{D})\sqrt{np}.
    \end{align}
\end{lemma}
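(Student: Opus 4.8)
The plan is to combine a truncation step with a dimension-dependent spectral-norm bound and a concentration inequality. Fix $D>0$, let $\widetilde H$ be as in Assumption~\ref{assumption:tildeH}, and set the truncation threshold
\[
 t_0 := e^{D+2}K(\log n)^{1/\alpha}
\]
(read as $t_0 = e^{D+2}K$ when $\alpha=\infty$). First I would show that truncating at level $t_0$ is lossless with high probability: by Markov's inequality applied with exponent $r=\log n$, Assumption~\ref{assumption:tildeH} gives $\Prob(|\xi_{ij}|>t_0)\le (Kr^{1/\alpha}/t_0)^r = n^{-(D+2)}$, so a union bound over the at most $n^2$ indices $i\le j$ shows that, off an event of probability at most $n^{-D}$, every $\xi_{ij}$ obeys $|\xi_{ij}|\le t_0$; on that event $\widetilde H$ coincides with the truncated matrix $\widehat H:=(x_{ij}\xi_{ij}\mathbf 1(|\xi_{ij}|\le t_0))$. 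Only the moment hypothesis $\|\xi_{ij}\|_{L_r}\le Kr^{1/\alpha}$ is used here, so the heavy-tailed range $\alpha\in(0,1)$ is handled identically.

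Next I would pass to the centered matrix $\bar H:=\widehat H-\E\widehat H$. Its mean part is negligible: $|\E\widehat H_{ij}| = p\,|\E[\xi_{ij}\mathbf 1(|\xi_{ij}|>t_0)]|\le p\,\|\xi_{ij}\|_{L_2}\,\Prob(|\xi_{ij}|>t_0)^{1/2}\le pKn^{-(D+2)/2}$, whence $\|\E\widehat H\|\le npKn^{-(D+2)/2}=o(\sqrt{np})$ (using $p\le 1$). The entries of $\bar H$ above the diagonal are independent, centered, bounded in absolute value by $2t_0$, and satisfy $\E[\bar H_{ij}^2]\le \E[\widehat H_{ij}^2]\le p\,\E\xi_{ij}^2=p$, so $\max_i(\sum_j\E[\bar H_{ij}^2])^{1/2}\le\sqrt{np}$. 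Applying the spectral-norm bound of \cite{latala2018dimension} — which controls $\E\|\bar H\|$ for a symmetric matrix with independent bounded centered entries by the maximal row $\ell_2$-norm plus a $\sqrt{\log n}$-multiple of the maximal entry — yields $\E\|\bar H\|\le C_0(\sqrt{np}+t_0\sqrt{\log n})$ for an absolute constant $C_0$. The sparsity hypothesis $p\ge e^{2(D+2)}K^2(\log n)^{1+2/\alpha}/n$ is exactly the inequality $t_0^2\log n\le np$, so this gives $\E\|\bar H\|\le 2C_0\sqrt{np}$.

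Finally I would upgrade this expectation bound to a tail bound. The map $(\bar H_{ij})_{i\le j}\mapsto\|\bar H\|$ is convex and $\sqrt2$-Lipschitz for the Euclidean norm (the spectral norm being $1$-Lipschitz for the Frobenius norm), and its arguments are independent with supports of length at most $4t_0$; Talagrand's concentration inequality for convex Lipschitz functions of bounded independent variables then gives $\Prob(\|\bar H\|>\E\|\bar H\|+s)\le C\exp(-cs^2/t_0^2)$, and choosing $s\asymp\sqrt D\,t_0\sqrt{\log n}\le\sqrt D\,\sqrt{np}$ makes the right side at most $n^{-D}$. Intersecting with the truncation event, with probability at least $1-2n^{-D}$ one gets $\|\widetilde H\|=\|\widehat H\|\le\|\bar H\|+\|\E\widehat H\|\le C(1+\sqrt D)\sqrt{np}$, which is the claim (for small $n$ the lemma is trivial with a large enough $C$, so one may assume $n$ large throughout). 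The one step needing care is the bookkeeping that ties the three scales together: $t_0$ must be large enough that truncation is lossless at confidence $n^{-D}$, yet small enough that the dimensional factor $t_0\sqrt{\log n}$ is absorbed by $\sqrt{np}$, and it is precisely the hypothesis $p\gtrsim K^2(\log n)^{1+2/\alpha}/n$ that reconciles these two demands; once the scales are aligned, the remaining estimates are routine.
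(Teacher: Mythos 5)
Your proof is correct and follows essentially the same route as the paper: truncate each entry at $e^{D+2}K(\log n)^{1/\alpha}$ using Markov at level $r=\log n$, bound the variance of the truncated entries by $p$, control the rank-one mean by Cauchy--Schwarz against the small truncation probability, and invoke the Lata\l a--van Handel--Youssef spectral-norm bound. The only (cosmetic) difference is that you split that last ingredient into an expectation bound plus Talagrand's concentration for convex Lipschitz functions, whereas the paper cites \cite[Theorem 4.8]{latala2018dimension} directly as a tail bound; the bookkeeping and the role of the sparsity hypothesis $t_0^2\log n\le np$ are identical.
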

\begin{proof}
Since the entries of $\widetilde{H}$ satisfy $\E|\widetilde{H}_{ij}|^r \le K^r r^{\frac{r}{\alpha}}$   for all $r\ge 1$, applying Markov's inequality to $|\widetilde{H}_{ij}|^{r}$ with $r=\log n$ and  a union bound yields
\begin{align}
   & \Prob\left(|\widetilde{H}_{ij}|>  e^{D+2} K (\log n)^{\frac{1}{\alpha}} \right) \le n^{-2-D},\\
&\Prob\left(\|\widetilde{H}\|_{\infty} >  e^{D+2} K (\log n)^{\frac{1}{\alpha}} \right) \le n^{-D}.
\end{align}
Denote $B:= e^{D+2} K (\log n)^{\frac{1}{\alpha}}$. Define $\check{H}_{ij}=\widetilde{H}_{ij} \mathbf{1} \{|\widetilde{H}_{ij}|\leq B \}$. Then $|\check{H}_{ij}-\E \check{H}_{ij}| \leq 2B$.  
Since $\xi_{ij}$ is centered and has variance 1, for all $i,j$,
\[\E(\check{H}_{ij}-\E \check{H}_{ij})^2=\Var(\check{H}_{ij})\leq \Var(\widetilde H_{ij})\leq p. \]   
Since $\check H$ has independent bounded entries, 
\cite[Theorem 4.8]{latala2018dimension} implies 
\[ \|\check{H}-\E\check{H}\|\leq C\left(\sqrt{np}+ B\sqrt{\log n}+t \right) \]
with probability at least $1-\exp(-\frac{t^2}{CB^2})$. Taking $t=\sqrt{CD\log n} B$ implies with probability at least $1-n^{-D}$,
$ \|\check{H}-\E\check{H}\|\leq C(1+\sqrt D) (\sqrt{np}+ B\sqrt{\log n})$. On the other hand, since $\E\check{H}$ is of rank-1, 
\begin{align}
    \| \E\check{H}\|&=n |\E \check{H}_{ij}|=n|\E \check{H}_{ij}-\E \widetilde{H}_{ij}|=n |\E \widetilde{H}_{ij} \mathbf{1} \{|\widetilde{H}_{ij}|> B \}|\\
    &\leq n\sqrt{\E\widetilde{H}_{ij}^2}\sqrt{\Prob(|\widetilde{H}_{ij}|>B) }\leq C n\sqrt{p} n^{-1-D/2}\leq C\sqrt{np}.
\end{align}
Therefore with probability at least $1-2n^{-D}$,
\begin{align}
    \| \widetilde{H}\| =\|\check{H}\|&\leq \| \check{H}-\E \check{H}\| +\| \E \check{H}\|\\
    &\leq C (1+\sqrt{D})(\sqrt{np}+Ke^{D+2} (\log n)^{\frac{1}{2}+\frac{1}{\alpha}})\leq 2C (1+\sqrt{D})\sqrt{np},
\end{align}
where in the last inequality, we use the assumption that $p\geq \frac{K^2e^{2(D+2)}(\log n)^{1+2/\alpha}}{n}$. 
\end{proof}

We work on the conclusion of Lemma \ref{lem:bdH}. This result implies that for each eigenvalue $\lambda_i$ of $H$, $z_i := \lambda_i + \ii \eta$, where $\eta = L C_\alpha^2 K^4 \frac{\log^2 n}{n}$, lies in a spectral domain $$\hat{\mathbf S}\equiv \hat{\mathbf S}(D):=\{ E + \ii \eta \in \mathbb{C} : |E|\le C(1+\sqrt{D}), n^{-1} < \eta \le 1 \}.$$ The domain $\hat{\mathbf S}$ is a slight modification of the spectral domain $\mathbf S$ defined in \eqref{eq:S}. An inspection of the proof of Theorem \ref{thm:local} reveals that there exist constants $L \equiv L(\delta, D)$ and $n_0 \equiv n_0(\delta,D)$ such that for
$p \ge L C_\alpha^2 K^4 \frac{(\log n)^{\max\{2,\frac{2}{\alpha} \}}}{n}$, we have
\begin{align}\label{eq:entryG01}
\Prob\left( \max_{i,j} \left| G_{ij}(z) - m(z) \delta_{ij} \right| \le \delta \right) \ge 1- n^{-D}.
\end{align}
for any $z=E + \ii \eta \in \hat{\mathbf S}$ with $\eta \ge L C_\alpha^2 K^4 \frac{\log^2 n}{n}$, provided $n\ge n_0$. The only modification needed in the proof occurs in the final step, where we take a union bound over the lattice set $\hat{\mathbf S} \cap n^{-3} {\mathbb Z}^2$. The size of this lattice set depends on $D$, and we omit the detailed analysis. 

Therefore, taking $\delta=1/2$ in \eqref{eq:entryG01} and using the bound $|m|\le 1$, we have
\begin{align}
    \frac{3}{2} \ge \Im G_{kk}(z_i) = \sum_{j=1}^n \frac{\eta}{(\lambda_j -\lambda_i)^2 + \eta^2} |u_j(k)|^2 \ge \frac{1}{\eta} |u_i(k)|^2,
\end{align}
which yields $|u_i(k)| \le \sqrt{{2}{\eta}}  = \sqrt{2L} C_\alpha K^2 \frac{\log n}{\sqrt n}$. This completes the proof.

\subsection{Proof of Theorem \ref{thm:delo-new}}\label{app:delo-new}

For $\alpha\geq 2$, we provide a stronger spectral norm concentration bound. The proof is based on a general result from \cite{brailovskaya2024extremal} for the spectral norm of inhomogeneous random matrices, where each entry has a symmetric distribution. We apply a symmetrization argument to obtain the following bound: 

\begin{lemma}\label{lemma:norm_concentration}
     Let $X$ be an $n\times n$ real symmetric matrix with $X_{ij}=b_{ij}\xi_{ij}$  and $X_{ii}=\sqrt{2}b_{ij}\xi_{ii}$. Here $b_{ij}\geq 0$ and $\xi_{ij}$ are independent  real random variables for $i\geq j$ with $\E \xi_{ij}^{2p} \leq \E g^{2p}$ for all $p\in \mathbb N$, where $g\sim N(0,1)$. Let 
    $  \sigma^2=\max_{i} \sum_{j} b_{ij}^2, \sigma_{*}^2=\max_{ij} b_{ij}^2$.  Then \begin{align}\label{eq:symmetric_concentration}
        \mathbb P\left(\|X\| \geq 4\sqrt{\sigma^2+\sigma_*^2}+\sigma_*t \right) \leq 2n e^{-\frac{t^2}{16}}.
        \end{align}
\end{lemma}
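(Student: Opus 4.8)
\textbf{Proof proposal for Lemma~\ref{lemma:norm_concentration}.}

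The plan is to reduce the claim to a known concentration inequality for inhomogeneous random matrices whose entries have \emph{symmetric} distributions, and the natural candidate is the result of Brailovskaya--van Handel \cite{brailovskaya2024extremal} quoted in the text. The first step is a symmetrization argument: let $X'$ be an independent copy of $X$ and consider $X - X'$. Since each entry of $X-X'$ is a difference of two i.i.d.\ copies of $b_{ij}\xi_{ij}$ (and similarly on the diagonal), $X-X'$ has symmetrically distributed, independent entries above the diagonal, and one can write $X-X' \overset{d}{=} Y$ where $Y_{ij} = b_{ij}\eta_{ij}$ with $\eta_{ij}$ symmetric; the subtlety is that $\Var(\eta_{ij})$ now picks up a factor $2$, which is exactly why the diagonal of $X$ carried a compensating $\sqrt{2}$ and why the variance parameters in \eqref{eq:symmetric_concentration} appear as $\sigma^2$ rather than $2\sigma^2$. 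I would then use Jensen's inequality in the usual way, $\Pr{\|X - \E X\| \ge s} \le 2\Pr{\|X - X'\| \ge s}$, together with the fact that $\E X = 0$ here (all $\xi_{ij}$ have mean zero, being symmetric around zero once we note $\E\xi_{ij}=0$ follows from $\E\xi_{ij}^{2p}\le \E g^{2p}$ applied at $p=0$ and the odd-moment structure — actually the cleanest route is to assume, or observe from the moment hypothesis interpreted appropriately, that the $\xi_{ij}$ are already symmetric, so that $X$ itself has symmetric entries and no symmetrization detour is needed).

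Assuming $X$ has symmetric entries directly, the second step is to invoke the Brailovskaya--van Handel bound for such matrices. Their theorem controls $\|X\|$ in terms of the two natural parameters: the operator norm of the variance profile, which here is $\sigma^2 = \max_i \sum_j b_{ij}^2$ (since $\|\mathrm{diag}(\sum_j b_{ij}^2)\| = \sigma^2$), and a ``uniform smoothness'' or maximal-entry parameter, which here is $\sigma_* = \max_{ij} b_{ij}$. The comparison hypothesis $\E\xi_{ij}^{2p}\le \E g^{2p}$ is precisely what lets one dominate all the relevant moment functionals of $X$ by those of the Gaussian matrix with the same variance profile, so that the Gaussian-type bound $\E\|X\| \le 2\sqrt{\sigma^2} + C\sigma_*(\log n)^{?}$... — but in fact the sharp form gives $\E\|X\| \lesssim \sqrt{\sigma^2 + \sigma_*^2}$ with a clean constant, and I would extract the constant $4$ by tracking their explicit inequality (or by a slightly lossy but elementary bound: $\|X\|\le \|\E X_{\mathrm{Gauss}}\text{-type bound}\|$ plus a fluctuation term). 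The third step is the tail: having $\E\|X\| \le 4\sqrt{\sigma^2+\sigma_*^2}$, one applies the bounded-differences / Gaussian-concentration inequality for $\|X\|$ as a $\sigma_*$-Lipschitz function of the independent entries $\xi_{ij}$ — since changing one $\xi_{ij}$ moves $\|X\|$ by at most $\sqrt2\,\sigma_*|\Delta\xi_{ij}|$ and the $\xi_{ij}$ satisfy sub-Gaussian moment bounds (again by the comparison with $g$), one gets $\Pr{\|X\| \ge \E\|X\| + \sigma_* t} \le 2n e^{-t^2/16}$, with the factor $2n$ absorbing both the two-sided tail and the union over the Lipschitz argument / net, and the constant $16$ coming from the sub-Gaussian parameter.

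The main obstacle I anticipate is getting the constants to land exactly at $4$ and $16$: the Brailovskaya--van Handel estimate and the subsequent concentration step both come with universal but somewhat implicit constants, so the real work is a careful bookkeeping of which intermediate bound (their Theorem, a matrix Bernstein/Rosenthal step, or a direct net-plus-sub-Gaussian argument) yields constants that combine to the stated form, possibly after replacing $\sqrt{\sigma^2}+\sqrt{\sigma_*^2}$ by $\sqrt{\sigma^2+\sigma_*^2}$ via $\sqrt a + \sqrt b \le \sqrt{2(a+b)}$ and adjusting. A secondary technical point is justifying that the moment-comparison hypothesis $\E\xi_{ij}^{2p}\le\E g^{2p}$ is strong enough to feed into the symmetric-matrix bound, which formally may require the entries to be symmetric; I would address this either by noting the hypothesis implies the $\xi_{ij}$ are symmetric sub-Gaussian in the relevant sense, or by inserting the symmetrization step above and tracking the harmless factor-of-two changes in $\sigma^2$ and $\sigma_*^2$ that the $\sqrt2$ on the diagonal was designed to absorb.
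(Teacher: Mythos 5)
Your high-level plan (symmetrize, invoke Brailovskaya--van Handel, conclude with a Gaussian-type tail in $\sigma,\sigma_*$) points in the right direction, but two of your key steps do not hold as stated. First, your ``cleanest route'' rests on the claim that the hypothesis $\E\xi_{ij}^{2p}\le\E g^{2p}$ forces the $\xi_{ij}$ to be symmetric (or even mean zero); it does not — it constrains only even moments, and in the paper's application the entries are general centered $\alpha$-subexponential variables, not symmetric ones. So the symmetrization detour cannot be skipped. Your fallback, symmetrizing with an independent copy $X-X'$, is also problematic: the even moments of $\xi_{ij}-\xi_{ij}'$ are no longer dominated by those of $g$ (only by those of a Gaussian with larger variance), so the moment-comparison hypothesis needed to apply \cite{brailovskaya2024extremal} is distorted; and the $\sqrt2$ on the diagonal is part of the hypothesis structure of the matrix lemma being invoked (a GOE-type normalization), not a device to absorb symmetrization factors. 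The paper instead symmetrizes by multiplying each entry by an independent Rademacher sign, $X'_{ij}=\varepsilon_{ij}X_{ij}$, and uses the moment-generating-function comparison $\E e^{t\|X\|}\le\E e^{2t\|X'\|}$ (\cite[Exercise 6.4.5]{vershynin2018high}); this keeps the variance profile unchanged and preserves the even-moment domination since $(\varepsilon\xi)^{2p}=\xi^{2p}$.

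Second, your concluding step — bound $\E\|X\|$ and then apply ``bounded-differences / Gaussian-concentration'' for $\|X\|$ as a $\sigma_*$-Lipschitz function of the entries — is not available under the stated assumptions: the entries are unbounded and need not be Gaussian or log-concave, so neither Talagrand's convex-distance inequality nor Gaussian Lipschitz concentration applies, and dimension-free concentration of $\|X\|$ around its mean is exactly what one cannot get for free here. The paper never separates mean and fluctuation: it applies \cite[Lemma 3.17]{brailovskaya2024extremal} to the symmetrized matrix (after rescaling so that $\sigma_*=1$) to get a trace-MGF bound $\E\tr e^{tX'}\le e^{2\sqrt{d}\,t+t^2}$ with $d=\lceil\sigma^2\rceil$, converts this to an MGF bound on $e^{t\|X\|}$ (this is where the prefactor $2n$ arises), and then a single Markov/Chernoff optimization yields the constants $4$ and $16$ directly. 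As written, your argument has a genuine gap at both the symmetrization step and the tail step, and the constants you hope to ``bookkeep'' come from a mechanism (Chernoff on a trace-MGF bound) different from the expectation-plus-concentration route you propose.
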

\begin{proof}
   Let $X'$ be a symmetric random matrix such that $X'_{ij}=\varepsilon_{ij}X_{ij}$, where $\varepsilon_{ij}$ are i.i.d. Rademacher random variables.  Then $X'_{ij}$ has a symmetric distribution. From \cite[Exercise 6.4.5]{vershynin2018high} for any $t\geq 0$,
$\E e^{t\|X\|} \leq  \E e^{2t\|X'\|}$. 
From \cite[Lemma 3.17]{brailovskaya2024extremal}, assuming $\sigma_*=1$, and let $d=\lceil \sigma^2\rceil,$ we have
$\E \tr e^{tX'}\leq e^{2\sqrt{d}  t+t^2}$.
Since $e^{t\|X'\|} \leq n\tr e^{tX'} +n\tr e^{-tX'}$, we obtain $\E[e^{t\|X'\|}]\leq 2ne^{2\sqrt{d}  t+t^2}$. Then $\E e^{t\|X\|}\leq  2n e^{4\sqrt{d} t+4t^2}$. By Markov's inequality, for any $\varepsilon\geq 0$,
\begin{align}
    \mathbb P \left( \|X\| \geq 4\sqrt{\sigma^2+1}+ \varepsilon  \right)\leq 2n e^{-\frac{\varepsilon^2}{16}}.
\end{align}
Applying the inequality above to $\frac{1}{\sigma_*}X'$, we obtain \eqref{eq:symmetric_concentration}.
\end{proof}

With Lemma~\ref{lemma:norm_concentration}, we obtain the following spectral norm concentration result for $\widetilde{H}$.
\begin{lemma}\label{lemma:new-norm} Consider the matrix $H$ in \eqref{eq:H}. Assume  $\alpha\geq 2$.
For any $c>0$,  assume $p\geq \frac{c\log n}{n}$.  For any $D>0$, there is a constant $C>0$ depending on $c,D$ such that   with probability at least $1-2n^{-D}$, $ \|\widetilde{ H}\| \leq CK\sqrt{np}$.
\end{lemma}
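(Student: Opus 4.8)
\textbf{Proof plan for Lemma \ref{lemma:new-norm}.}

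The plan is to deduce this from Lemma \ref{lemma:norm_concentration} after a truncation step that is considerably milder than the one used in Lemma \ref{lem:bdH}, because here we only need control of the even moments of the truncated entries (not almost-sure boundedness by a sub-polynomial quantity). First I would fix the truncation level $B := e^{D+2}K(\log n)^{1/\alpha}$ and set $\check H_{ij} = \widetilde H_{ij}\mathbf 1\{|\widetilde H_{ij}|\le B\}$, exactly as in Lemma \ref{lem:bdH}; the same union bound over the $O(n^2)$ entries shows $\widetilde H = \check H$ with probability at least $1-n^{-D}$, and the rank-one mean correction $\|\E\check H\| \le Cn\sqrt p\,\Pr(|\widetilde H_{ij}|>B)^{1/2} \le C\sqrt{np}$ is negligible (using $p\ge c\log n/n$ to absorb the polylog factor, since $\alpha\ge 2$ forces $(\log n)^{1/2+1/\alpha}\lesssim \log n \lesssim np$). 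So it suffices to bound $\|\check H - \E\check H\|$.

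The key point is that for $\alpha\ge 2$ the standardized entries are sub-Gaussian, and the even moments of $\xi_{ij}$ — hence of the centered truncations — are dominated by Gaussian even moments after rescaling by an absolute constant depending only on $K$. Concretely, from $\|\xi_{ij}\|_{L_r}\le Kr^{1/\alpha}\le Kr^{1/2}$ one gets $\E|\xi_{ij}|^{2p} \le (K\sqrt{2p})^{2p} = (2K^2)^p p^p \le (2K^2 e)^p (2p-1)!! = \E\big((\sqrt{2K^2 e}\,g)^{2p}\big)$ up to a harmless constant adjustment, and truncation plus centering only shrinks even moments up to another absolute factor (e.g.\ $\|\check\xi_{ij}-\E\check\xi_{ij}\|_{L_{2p}}\le 2\|\xi_{ij}\|_{L_{2p}}$). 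Thus $\widehat H := \check H - \E\check H$, divided by an absolute constant $c_0(K)$, satisfies the hypothesis of Lemma \ref{lemma:norm_concentration} with $b_{ij}^2 = \frac{1}{np}$ for off-diagonal entries (so $\sigma^2 = \max_i\sum_j b_{ij}^2 \le 1$ and $\sigma_*^2 = \frac1{np}$), after noting that $\widetilde H_{ij}/\sqrt{np} = H_{ij}$ and $b_{ij}=1/\sqrt{np}$ there. Applying \eqref{eq:symmetric_concentration} with $t = 4\sqrt{D\log n}$ gives, with probability at least $1-2n\cdot n^{-D} \ge 1 - 2n^{-(D-1)}$, the bound $\|\widehat H/(c_0\sqrt{np})\| \le 4\sqrt{2} + \frac{4\sqrt{D\log n}}{\sqrt{np}} \le 4\sqrt 2 + \frac{4\sqrt D}{\sqrt c}$, i.e.\ $\|\widehat H\|\le C(c,D,K)\sqrt{np}$; rescaling $D\mapsto D+1$ at the outset yields the stated $n^{-D}$. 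Combining with $\widetilde H = \check H = \widehat H + \E\check H$ and the two probability bounds by a union bound completes the proof with $\|\widetilde H\|\le CK\sqrt{np}$.

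The main obstacle — really the only nontrivial point — is verifying the Gaussian even-moment domination hypothesis $\E(\xi_{ij}')^{2p}\le \E g^{2p}$ required by Lemma \ref{lemma:norm_concentration}, where $\xi_{ij}'$ denotes the (rescaled) centered truncated standardized entry. This needs: (i) the comparison $\E|\xi_{ij}|^{2p}\le C(K)^p\,\E g^{2p}$ from the $L_r$ bound, which is elementary via $(2p-1)!!\ge (2p/e)^p$ up to constants; (ii) stability of this comparison under truncation (immediate, truncation only decreases absolute moments) and under centering ($\|Y-\E Y\|_{L_{2p}}\le 2\|Y\|_{L_{2p}}$); and (iii) folding the absolute constant $C(K)$ into the normalization $b_{ij}\mapsto c_0 b_{ij}$, which is exactly why the final bound carries the factor $K$ rather than being absolute. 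None of the other steps (truncation, the rank-one mean bound, the union bound, the use of $p\ge c\log n/n$ with $\alpha\ge2$ to kill the polylog) present any difficulty; they mirror the corresponding steps in the proof of Lemma \ref{lem:bdH}.
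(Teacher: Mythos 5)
The proposal has a genuine gap at the point where you identify the variance profile for Lemma~\ref{lemma:norm_concentration}, and it is not a small slip. You take $b_{ij}^2 = \frac{1}{np}$ uniformly over the off-diagonal entries and then assert $\sigma^2 = \max_i\sum_j b_{ij}^2 \le 1$; but $\sum_j \frac{1}{np} = \frac{n-1}{np}\approx \frac{1}{p}$, so $\sigma^2 \approx 1/p$, not $1$. This is not repairable by rescaling. If you try to absorb the factor $\sqrt p$ into $b_{ij}$ by setting $b_{ij}=1/\sqrt n$, the corresponding $\xi_{ij}$ in the lemma must be $\widetilde H_{ij}/(c_0\sqrt p)$, and then
\begin{align}
\E\Bigl(\frac{\widetilde H_{ij}}{c_0\sqrt p}\Bigr)^{2q}
\le p^{1-q}\Bigl(\frac{K}{c_0}\Bigr)^{2q}(2q)^{q},
\end{align}
which blows up as $p\to 0$ for every $q\ge 2$, so no absolute $c_0(K)$ makes the Gaussian moment-domination hypothesis hold. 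And if you keep $b_{ij}=1/\sqrt{np}$ with $\sigma^2=1/p$, the lemma only gives $\|\widetilde H\|\lesssim \sqrt{np}\cdot\sqrt{1/p}=\sqrt n$, which is off by a factor $1/\sqrt p$ from the target. The structural problem is that the entries $\widetilde H_{ij}=x_{ij}\xi_{ij}$ carry an unconditioned Bernoulli factor, and a fixed uniform $b_{ij}$ cannot simultaneously (i) make $\sigma^2$ small and (ii) preserve the moment-domination hypothesis for all moment orders.

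The missing idea in the paper's proof is to \emph{condition on the Bernoulli part} $X=(x_{ij})$. Given $X$, the entry $\widetilde H_{ij}=x_{ij}\xi_{ij}$ is exactly of the form $b_{ij}\xi_{ij}$ with $b_{ij}=x_{ij}\in\{0,1\}$, and only the sub-Gaussian factor $\xi_{ij}$ needs the even-moment comparison $\E(\xi_{ij}/(CK))^{2q}\le \E g^{2q}$, which holds with an absolute $C$ since $\alpha\ge 2$. Then $\sigma^2 = \max_i\sum_j x_{ij}$, and this is where $p\gtrsim \log n/n$ is used: by Bernstein's inequality $\max_i\sum_j x_{ij}\le C_2 np$ with probability $1-n^{-D}$, giving $\sigma^2\lesssim np$, $\sigma_*\le 1$, and hence $\|\widetilde H\|\lesssim K\sqrt{np}$ on this event. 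No truncation is needed at all, because Lemma~\ref{lemma:norm_concentration} only requires moment bounds rather than almost-sure boundedness; your truncation step (borrowed from the proof of Lemma~\ref{lem:bdH}, which uses a boundedness-based theorem) is superfluous here, though it is not what causes the failure.
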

\begin{proof}
 Since $\alpha\geq 2$, from the moment assumption on $\xi_{ij}$ we have \[  \E (\xi_{ij})^{2p}\le K^{2p} (2p)^{\frac{2p}{\alpha}}\leq K^{2p} (2p)^p.\]
Since $\E g^{2p}=(2p-1)!!=\frac{2^p}{\sqrt \pi}\Gamma(p+1/2)$, and $\Gamma(x) \sim \sqrt{2\pi x} (x/e)^x$ as $x\to\infty$,  there is an absolute constant $C>1$ such that 
$\E \left( \frac{1}{CK} \xi_{ij}\right)^{2p}\leq \E g^{2p}$.
Let $X$ be an $n\times n$ symmetric random matrix such that $X_{ij}=x_{ij}$ where $x_{ij}\sim \mathrm{Ber}(p)$. Conditioned on $X$, $\widetilde H$ is a symmetric random matrix with a variance profile given by $X$. Since $p\geq \frac{c\log n}{n}$, by Bernstein's inequality, there is a constant $C_2>0$ depending on $D$ such that with probability $1-n^{-D}$, 
\[\max_{i} \sum_{j} X_{ij} \leq C_2 np.\]
Conditioned on the event $\mathcal E:=\{\max_{i} \sum_{j} X_{ij} \leq C_2np\}$, by choosing $C_2$ sufficiently large, we can apply Lemma~\ref{lemma:norm_concentration} to $\frac{1}{CK} \tilde H$  to get 
\begin{align}
    \mathbb P \left( \frac{1}{CK}\|\tilde H\|  \geq 3C_2 \sqrt{np}  \mid \mathcal E\right) \leq n^{-D}.
\end{align}
Therefore with probability at least $1-2n^{-D}$, $\|\widetilde{H}\| \leq C'K\sqrt{np}$ for some constant $C'$ depending only on $c, D$.
\end{proof}

The proof of Theorem \ref{thm:delo-new} follows a similar approach to that of Theorem \ref{thm:delo}. The proof of Theorem~\ref{thm:delo-new} follows a similar approach to that of Theorem~\ref{thm:delo};
the key difference is that, while Theorem~\ref{thm:delo} uses only the baseline concentration bounds (Theorems~\ref{thm:sparse_alpha} and~\ref{thm:linear}), the proof of Theorem~\ref{thm:delo-new} additionally relies on the Bennett-type refinements (Theorems~\ref{thm:linearbetter} and~\ref{thm:quadbetter}) via the improved moment estimates in Lemmas~\ref{coro:linearbetter} and~\ref{lem:momentquad-1}, which are sharper in the large-deviation regime. Concretely, these improved inputs yield an additional logarithmic gain (visible through the $[\log(\cdot)]^{-\beta}$ factors in Lemma~\ref{lem:newerror-1}), which strengthens the large-deviation
error parameters and reduces the sparsity threshold needed to close the self-consistent estimates
from $p\gtrsim \log^2 n/n$ (the regime of Theorem~\ref{thm:delo}) to $p\gtrsim \log n/n$, enabling
Theorem~\ref{thm:delo-new}. For brevity, we present only the essential estimates, omitting the full derivation.

First, we have the analog of Theorem \ref{thm:local} as the following local law that holds up to the supercritical regime for $p$. Recall that we denote $\beta = 1-\frac{1}{\alpha}$. 
\begin{theorem}[Local law for sparse Wigner matrix with sub-gaussian entries]\label{thm:local-new}
    Consider the matrix $H$ in \eqref{eq:H} and its Green function $G(z)$. Assume $\alpha\ge 2$. For any $D>0$ and $\delta,\varepsilon\in (0,1)$, there exist $L \equiv L(\delta, D,\varepsilon)$ and $n_0\equiv n(\delta,\alpha,K,D,\varepsilon)$ such that the following holds for $n\ge n_0$.  Let $  p\ge L C_\alpha^2 K^4 \frac{\log n}{n}$. For $z=E+\ii \eta \in \mathbf S$ where $\eta \ge  n^{-1+\tau}$ with $\tau \ge \varepsilon(\log n)^{-1+\frac{1}{2\beta}}$, we have
\begin{align}\label{eq:entryG02}
\Prob\left( \max_{i,j} \left| G_{ij}(z) - m(z) \delta_{ij} \right| \le \delta \right) \ge 1- n^{-D}.
\end{align}
Here, $C_\alpha\ge 1$ is an absolute constant depending only on $\alpha$.
\end{theorem}
Recall $\phi$ from \eqref{def:phi}. The proof of Theorem \ref{thm:local-new} relies on the next key estimates analogous to Lemma \ref{lem:newerror}. These are derived using Lemma \ref{lem:linear_moment}, Lemma \ref{coro:linearbetter}, and Lemma \ref{lem:momentquad-1}, following a similar approach to the proof of Lemma \ref{lem:newerror}. 
\begin{lemma}\label{lem:newerror-1}
Assume $r \ge 2$ and $\alpha \ge 2$. Denote $\beta = 1- \frac{1}{\alpha}$. We have
\begingroup
    \allowdisplaybreaks
\begin{align*}
&\max_{i\neq j} \|\phi G_{ij}\|_{L_r} \le 4 C_\alpha K \max\left\{ \frac{\sqrt{r}}{\sqrt{n \eta}}, \frac{r}{\sqrt{np}} \min\left\{1, \frac{1}{[\log(\frac{2r\eta}{p})_+]^{\beta}} \right\} \right\}:=\zeta_1,\\
&\max_{i,j\neq k} \|\phi(G_{ij} - G_{ij}^{(k)}) \|_{L_r} \le 4 C_\alpha K \max\left\{ \frac{\sqrt{r}}{\sqrt{n \eta}}, \frac{r}{\sqrt{np}} \min\left\{1, \frac{1}{[\log(\frac{2r\eta}{p})_+]^{\beta}} \right\} \right\}=\zeta_1,\\
&\max_{i} \|\phi Y_i G_{ii} \|_{L_r} \le 6 C_\alpha K^2 \max\left\{ \frac{r}{\sqrt{n\eta}}, \frac{r^2}{np} \min\left\{ 1, \frac{1}{[\log(\frac{r\sqrt{\eta}}{p\sqrt{n}})_+]^{2\beta}} \right\}\right\}\\
&\qquad\qquad \qquad\qquad+ 6 C_\alpha K^2 \max\left\{ \frac{\sqrt{r}}{\sqrt{n p}}, \frac{r}{np}\right\}:=\zeta_2.
\end{align*}
\endgroup
\end{lemma}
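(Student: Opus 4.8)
The plan is to adapt the proof of Lemma \ref{lem:newerror} verbatim, replacing each invocation of the basic moment bounds \eqref{eq:linear}, \eqref{eq:linearsq}, \eqref{eq:quad} by the refined ones \eqref{eq:linearcom} (Lemma \ref{coro:linearbetter}) and \eqref{eq:quad-newmoment} (Lemma \ref{lem:momentquad-1}), and then track how the extra logarithmic gains propagate through the Green-function identities. Throughout I work on the event $\phi=1$ (using $\phi\le\phi^{(i)}$ and the nesting of conditional $L_r$ norms with respect to $H^{(i)}$ exactly as in Lemma \ref{lem:newerror}), so that all entries $|G^{(i)}_{kl}|$ that appear are bounded by $2$, and the Ward identity \eqref{eq:ward} controls the $\ell_2$ norms $\sum_k^{(i)}|G^{(i)}_{kj}|^2=\Im G^{(i)}_{jj}/\eta$ and $\|G_0^{(i)}\|_F^2\le 2n/\eta$, $\|G_0^{(i)}\|_{1,\infty}\le 2\sqrt{n/\eta}$.

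First I would prove the bound on $\max_{i\ne j}\|\phi G_{ij}\|_{L_r}$. Starting from $G_{ij}=-G_{ii}\sum_k^{(i)}H_{ik}G^{(i)}_{kj}$ and conditioning on $H^{(i)}$, the vector $a$ in Lemma \ref{coro:linearbetter} has entries $\frac{1}{\sqrt{np}}\widetilde H_{ik}$-coefficients $G^{(i)}_{kj}$, so $M=\|a\|_\infty\le 2/\sqrt{np}$ and $\lambda^2=p\sum_k^{(i)}|G^{(i)}_{kj}|^2\le 2p/\eta$, giving $\Delta=(\lambda/M)^2\asymp np\cdot(p/\eta)=np^2/\eta$ — wait, more precisely, with the normalization inside \eqref{eq:linearcom} we get $\lambda\mapsto\sqrt{p}\,\|G^{(i)}_{\cdot j}\|_2/\sqrt{np}$ and $M\mapsto 2/\sqrt{np}$, so $r/\Delta\asymp r\eta/p$, and \eqref{eq:linearcom} yields the middle term $\frac{r}{\sqrt{np}}\min\{1,[\log(2r\eta/p)_+]^{-\beta}\}$ together with the sub-gaussian term $\sqrt{r}\cdot\sqrt{p}\,\|G^{(i)}_{\cdot j}\|_2/\sqrt{np}\le\sqrt{r}/\sqrt{n\eta}$; multiplying by $\|G_{ii}\|\le 2$ gives $\zeta_1$. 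The bound for $\|\phi(G_{ij}-G^{(k)}_{ij})\|_{L_r}$ is identical since $G_{ij}-G^{(k)}_{ij}=G_{ik}\sum_l^{(k)}H_{kl}G^{(k)}_{lj}$ has the same structure.

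For $\|\phi Y_iG_{ii}\|_{L_r}$ I would decompose $Y_iG_{ii}$ as in \eqref{eq:YiG} into four pieces. The diagonal term $G_{ii}H_{ii}$ contributes $2Kr^{1/\alpha}/\sqrt{np}\le 2Kr/\sqrt{np}$ and the Ward term $\frac1n\sum_k G_{ki}G_{ik}$ contributes $2/(n\eta)$; both are absorbed into the second max in the definition of $\zeta_2$ (using $r\ge2$, $C_\alpha,K\ge1$). The quadratic piece $G_{ii}\sum_{k\ne l}^{(i)}H_{ik}G^{(i)}_{kl}H_{li}=\frac{1}{np}G_{ii}\sum_{k\ne l}^{(i)}\widetilde H_{ik}G^{(i)}_{kl}\widetilde H_{li}$ is where Lemma \ref{lem:momentquad-1} enters: with $A=G_0^{(i)}$ one has $M=\|A\|_{\max}\le2$, $\Gamma_{1,\infty}=\|G_0^{(i)}\|_{1,\infty}\le2\sqrt{n/\eta}$, $\sigma=\|G_0^{(i)}\|_F\le2\sqrt{n/\eta}$; after the $1/(np)$ normalization the parameters become $M\mapsto 2/(np)$, $\Gamma_{1,\infty}\mapsto (2/np)\cdot\sqrt{n/\eta}\cdot$(scaling), and $\sigma\mapsto$ similar, so $\Delta_1\asymp\sqrt{n/\eta}$ (up to the $p$ factors), $\Delta_2\asymp(n/\eta)^{1/2}$-type, and the key ratio $r/\Delta_1$, $r^{3/2}/\Delta_2$ both scale like $r\sqrt{\eta}/(p\sqrt n)$; \eqref{eq:quad-newmoment} then produces exactly $\frac{r^2}{np}\min\{1,[\log(\frac{r\sqrt\eta}{p\sqrt n})_+]^{-2\beta}\}$ plus the $\frac{r}{\sqrt{n\eta}}$ sub-gaussian term (from $\sqrt r\,p\|A\|_F$ after normalization). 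Finally the last piece $G_{ii}\sum_k^{(i)}(H_{ik}^2-n^{-1})G^{(i)}_{kk}$ is linear in $\widetilde H_{ik}^2-p$: since these are $(\alpha/2)$-subexponential with $\alpha/2\ge1$, applying Theorem \ref{thm:linear} (the $\alpha=1$-type bound already suffices here, no log improvement needed) with $\|a\|_\infty\le2/(np)$ and $\lambda^2\le 2p\cdot 4/(np)^2$ yields a contribution bounded by $C_\alpha K^2\max\{\sqrt r/\sqrt{np},\,r/(np)\}$, which is the second max in $\zeta_2$. Summing the four bounds and absorbing constants gives the claimed $6C_\alpha K^2$ form.

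The main obstacle is bookkeeping rather than conceptual: one must check that the argument of the logarithm coming out of \eqref{eq:linearcom} and \eqref{eq:quad-newmoment} really simplifies to the stated $\log(2r\eta/p)$ and $\log(r\sqrt\eta/(p\sqrt n))$ after all the $\sqrt{np}$ and $np$ normalizations are unwound — in particular one must verify that $\Delta=(\lambda/M)^2$ for the linear form equals (up to a harmless constant inside the log, which only changes $c_\alpha$) $p/\eta$ so that $r/\Delta=r\eta/p$, and similarly that for the quadratic form $\min\{\log(r/\Delta_1),\log(r^{3/2}/\Delta_2)\}$ collapses to a single expression of order $\log(r\sqrt\eta/(p\sqrt n))$. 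One must also be slightly careful that the hypothesis $r\ge2$ (with no upper bound on $r$ relative to $\sqrt{np}$, unlike Lemma \ref{lem:newerror}) is all that is needed, because the refined bounds are valid for all even $r$; any residual polynomially-small terms are dominated as in the proof of Lemma \ref{lem:newerror}. I would omit these routine estimates, as the paper does, and simply state that they follow "by a similar approach to the proof of Lemma \ref{lem:newerror}."
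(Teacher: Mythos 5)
Your proposal follows exactly the route the paper intends: the paper states only that Lemma~\ref{lem:newerror-1} is "derived using Lemma~\ref{lem:linear_moment}, Lemma~\ref{coro:linearbetter}, and Lemma~\ref{lem:momentquad-1}, following a similar approach to the proof of Lemma~\ref{lem:newerror}," and you fill in precisely those substitutions, with the parameters $\Delta=(\lambda/M)^2\asymp p/\eta$, $\Delta_1,\Delta_2\asymp p\sqrt{n/\eta}$ correctly identified so that the logarithms collapse to $\log(2r\eta/p)$ and $\log(r\sqrt{\eta}/(p\sqrt n))$ as stated. Your observation that the restriction $r^{\max\{1/\alpha,1\}}\le\sqrt{np}$ from Lemma~\ref{lem:newerror} can be dropped (because the quadratic error $r^2/(np)$ is now kept as a separate term in $\zeta_2$ rather than being absorbed into $r/\sqrt{np}$) is also the right explanation for the weaker hypothesis; the remaining discrepancies (e.g.\ factors of $2$ in the Ward-identity bounds, or saying both $r/\Delta_1$ and $r^{3/2}/\Delta_2$ "scale the same" when in fact only the minimum of the two logs matters) are harmless bookkeeping that gets absorbed into $C_\alpha$.
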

Specifically, $\zeta_1$ and the first term of $\zeta_2$ are obtained using the improved moment bounds from Lemma \ref{coro:linearbetter} and Lemma \ref{lem:momentquad-1}, while the second term of $\zeta_2$ follows from Lemma \ref{lem:linear_moment}.

With Lemma \ref{lem:newerror-1}, we can prove the analogue of Theorem \ref{thm:localmain}, following the same line of its proof. Define the fundamental error parameter $\widetilde{\zeta}:=\max\{\sqrt{\zeta_1},\sqrt{\zeta_2}\}$ where $\zeta_1,\zeta_2$ are given in Lemma \ref{lem:newerror-1}.
\begin{prop}\label{thm:localmain-1}
Under the same assumption as in Theorem~\ref{thm:local-new}.
Let $z\in \mathbf S$, $r\in \mathbb N$ and $p\in (0,1)$. Assume $n\ge 10$ and $r\ge 2$. For any $t\ge 1$ such that $20 t\widetilde{\zeta} \le 1$, we have
\[ \Prob \left( \max_{i,j} |G_{ij}(z) - m(z) \delta_{ij}| > 12 t\tilde\zeta \right)\le 3 n^6 t^{-2r}.\]
\end{prop}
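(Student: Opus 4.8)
The plan is to follow verbatim the proof architecture of Theorem~\ref{thm:localmain} (equivalently Proposition~\ref{prop:local} and its appendix proof), simply replacing the error parameter $\zeta$ of \eqref{eq:zeta} by $\widetilde\zeta:=\max\{\sqrt{\zeta_1},\sqrt{\zeta_2}\}$ built from the improved main estimates in Lemma~\ref{lem:newerror-1}. The two inputs are: (i) the deterministic self-consistent equation analysis, which is unchanged because it only uses $m(z)+1/m(z)+z=0$, the stability of this equation on $\mathbf S$, and the Schur complement identity $1/G_{ii}=-z-s+Y_i$; and (ii) the probabilistic error bounds, which are now furnished by Lemma~\ref{lem:newerror-1} instead of Lemma~\ref{lem:newerror}. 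Since $\widetilde\zeta$ still satisfies $\zeta_1,\zeta_2\le C_\alpha K^2 \widetilde\zeta^{\,2}$ and hence plays exactly the same structural role (controlling $\|\phi G_{ij}\|_{L_r}$, $\|\phi(G_{ij}-G_{ij}^{(k)})\|_{L_r}$, and $\|\phi Y_i G_{ii}\|_{L_r}$), no step of the continuity argument needs to be redone from scratch.

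Concretely, first I would state the single-scale bootstrap step: under the event $\phi(z)=1$, Schur's complement gives $|G_{ii}-m|\le C(|s-m|+|Y_i|)$ (by stability of the quadratic equation, as in \cite{HKM19,BGK16}), and averaging gives $|s-m|\le C\max_i|Y_iG_{ii}|$ on the same event; combined with Lemma~\ref{lem:newerror-1} and Markov's inequality at moment order $r$, this yields $\Prob(\phi=1,\ \max_{i,j}|G_{ij}-m\delta_{ij}|>C t\widetilde\zeta)\le C n^{C} t^{-2r}$ for any $t\ge 1$. Second, I would run the stochastic continuity / bootstrap from $\eta=1$ (where $\Gamma(z)\le 2$ deterministically for $n$ large, hence $\phi=1$) down to the smallest scale on the lattice $\{z_k=E+\ii\eta_k:\eta_k=1-kn^{-3}\}$, exactly as in Proposition~\ref{prop:local}: the Lipschitz (in fact $n^2$-Lipschitz) continuity of $G_{ij},s,m,\widetilde\zeta$ on $\mathbf S$ lets one propagate the bound $\Gamma\le 2$ (and hence $\phi=1$) from $\eta_k$ to $\eta_{k+1}$ provided $20t\widetilde\zeta_k\le1$, so a union bound over the $O(n^3)$ lattice points and over a second union bound over $E$ gives the claimed $3n^6 t^{-2r}$. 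Third, discretize $z$: since everything is $n^2$-Lipschitz and $\widetilde\zeta$ is decreasing in $\eta$ with $1/(n\eta)^2\le \widetilde\zeta t$, the lattice bound upgrades to all $z\in\mathbf S$ with the constant $12$ absorbing the discretization error, precisely as in the proof of Theorem~\ref{thm:localmain}.

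The only genuinely new content lies in verifying Lemma~\ref{lem:newerror-1}, i.e. re-deriving \eqref{eq:lemkey1}--\eqref{eq:lemkey3} with the sharper moment inputs: one conditions on $H^{(i)}$, applies the nesting property of conditional $L_r$-norms together with $\phi\le\phi^{(i)}$, uses the Schur identities $G_{ij}=-G_{ii}\sum_k^{(i)}H_{ik}G^{(i)}_{kj}$ and the $Y_i$-expansion \eqref{eq:Yi}, and then invokes Lemma~\ref{coro:linearbetter} (for the linear terms $\sum_k^{(i)}\widetilde H_{ik}G^{(i)}_{kj}$ and $\sum_k^{(i)}(\widetilde H_{ik}^2-p)G^{(i)}_{kk}$) and Lemma~\ref{lem:momentquad-1} (for the quadratic term $\sum_{k\ne l}^{(i)}\widetilde H_{ik}G^{(i)}_{kl}\widetilde H_{li}$) in place of Theorems~\ref{thm:linear} and~\ref{thm:sparse_alpha}; the Ward identity \eqref{eq:ward} controls $\|\phi^{(i)}G_0^{(i)}\|_{1,\infty}$, $\|\phi^{(i)}G_0^{(i)}\|_F$ and $\sum_k^{(i)}|G^{(i)}_{kj}|^2$ as before, and one reads off the parameters $\Delta=\lambda^2/M^2=p\eta/(2r)$ etc. to get the logarithmic factors $[\log(\cdot)_+]^{-\beta}$. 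This bookkeeping is routine but is where all the care is needed, so it is the main obstacle; I would therefore state Lemma~\ref{lem:newerror-1} with the explicit expressions for $\zeta_1,\zeta_2$ and indicate that its proof is the verbatim analogue of the proof of Lemma~\ref{lem:newerror} with the two moment theorems swapped, deferring the full computation to the appendix. Given Lemma~\ref{lem:newerror-1}, the proof of Proposition~\ref{thm:localmain-1} is a word-for-word copy of the proof of Theorem~\ref{thm:localmain}, so I would simply say so and omit the repetition.
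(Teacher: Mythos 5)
Your proposal matches the paper's own treatment: the authors likewise note that, given Lemma~\ref{lem:newerror-1}, Proposition~\ref{thm:localmain-1} is proved by "following the same line" as Theorem~\ref{thm:localmain} (i.e.\ Proposition~\ref{prop:local} and its appendix proof), with $\zeta$ replaced by $\widetilde\zeta=\max\{\sqrt{\zeta_1},\sqrt{\zeta_2}\}$ so that all three error norms are again bounded by $\widetilde\zeta^{\,2}$. One tiny slip in your parenthetical bookkeeping for Lemma~\ref{lem:newerror-1}: with $a_k=G^{(i)}_{kj}$ one has $\lambda^2\lesssim p/\eta$ and $M\lesssim 1$ on $\{\phi^{(i)}=1\}$, so the relevant $\Delta=\lambda^2/M^2$ is of order $p/\eta$ (giving $r/\Delta\asymp r\eta/p$, consistent with the stated $[\log(2r\eta/p)_+]^\beta$), not $p\eta/(2r)$; this does not affect the argument.
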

Since Theorem \ref{thm:local-new} is valid for $p\gtrsim \frac{\log^2 n}{n}$ as established in Theorem \ref{thm:local}, it suffices to prove Theorem \ref{thm:local-new} for the regime that $\log n \lesssim np \le \log^3 n$ (say). Now we apply Proposition \ref{thm:localmain-1} with $r=\log n$, $t=\exp(\frac{D+7}{2})$ and $\eta=n^{-1+\tau}$ with $\tau=\varepsilon(\log n)^{-1+\frac{1}{2\beta}}$. Denote $g(D,\delta):= \max\{\frac{12^2 t^2}{\delta^2},{20^2 t^2}\}$. Note that $\frac{2r\eta}{p}>1$ and $\frac{r\sqrt{\eta}}{p\sqrt{n}}>1$ for sufficiently large $n$. It is straightforward to verify that as long as $$p\ge \frac{40C_\alpha^2 K^4}{\varepsilon^{2\beta}}g(D,\delta)^2 \frac{\log n}{n},$$
we have $12 t\widetilde{\zeta} \le \delta$ and $20t\widetilde{\zeta} \le 1$ in Proposition \ref{thm:localmain-1} for $n$ sufficiently large. Theorem \ref{thm:local-new} follows immediately.

Consequently, Theorem \ref{thm:delo-new} can be proved following the same approach used for Theorem \ref{thm:delo}, incorporating Theorem \ref{thm:local-new} and Lemma \ref{lemma:new-norm}. The details are omitted.

\section{Proof of Proposition \ref{prop:local}}\label{app:local}
We now prove Proposition \ref{prop:local}, following closely the proof of Proposition 4.5 from \cite{HKM19}. We outline the main steps below. First, we recall the stability estimate from Lemma 4.4 of \cite{HKM19}: For $z\in \mathbf S$, let $m,\tilde{m}$ be the solutions of the equation $x^2 + zx +1 =0$. If $s$ satisfies $s^2 + zs +1=r$, then
\begin{align}\label{eq:stable}
    \min\{ |s-m|, |s-\tilde{m}| \} \le \sqrt{|r|}.
\end{align}
From Lemma \ref{lem:newerror} and the definition of $\zeta_k$, we have
\begin{align}\label{eq:properr}
    \max\left\{ \max_{i\neq j} \| \phi G_{ij}(z_k)\|_{L_r}, \max_{i,j\neq k} \|\phi(G_{ij} - G_{ij}^{(k)})(z_k) \|_{L_r}, \max_{i} \|\phi Y_i G_{ii}(z_k) \|_{L_r} \right\} \le \zeta_k^2.
\end{align}

Define the events
\[ \Omega_k:= \{|s(z_k) -m(z_k)| \le \zeta_k t\}\quad\text{and}\quad \Xi_k:=\{\Gamma(z_k) \le 3/2\}\]
for $k=0,1,\ldots,\mathcal K$. We proceed with an induction on $k$.

For $k=0$, we have $\eta_0=1$ and $\Gamma(z_0)\le 1$ using the trivial bound $\|G(z_0)\| \le 1$. Hence, $\phi(z_0)=1$ and $\Prob(\Xi_0)=1$. From the $1+z s + s^2 = \frac{1}{n} \sum_{i}G_{ii} Y_i$ and \eqref{eq:properr}, we obtain
\[\| 1+z_0 s(z_0) + s(z_0)^2 \|_{L_r} \le \max_i \|\phi Y_i G_{ii}(z_0)\|_{L_r} \le \zeta_0^2.\]
By Markov's inequality, we further get
\[ \Prob\left(|1+z_0 s(z_0) + s(z_0)^2| > \zeta_0^2 t^2 \right) \le t^{-2r}.\]
It follows from \eqref{eq:stable} that 
\begin{align}\label{eq:minsm}
    \Prob\left( \min\{ |s(z_0)-m(z_0)|, |s(z_0)-\tilde{m}(z_0)| \}> \zeta_0 t \right)\le t^{-2r}. 
\end{align}
By Vieta's formula, $m(z)+\tilde m(z)=-z$.  Therefore, for $z_0=E+i$, 
\[\Im \tilde m(z_0) = -1 - \Im m(z_0) < -1.\]
From $\Im \tilde{m}(z_0)<-1$, $\Im s>0$ and $\zeta_0 t <1/10$, we conclude that
\begin{align}\label{eq:k01}
  \Prob(\Omega_0^c) = \Prob\left( |s(z_0)-m(z_0)|> \zeta_0 t \right)\le t^{-2r}. 
\end{align}
Likewise, \eqref{eq:properr}, together with  Markov's inequality, yields
\begin{align}\label{eq:k02}
    \max_i \Prob(|Y_i(z_0) G_{ii}(z_0)|>\zeta_0 t) \le t^{-2r}\quad\text{and}\quad \max_{i\neq j} \Prob(|G_{ij}(z_0)| > \zeta_0 t) \le t^{-2r}.
\end{align}
These estimates, combined with $|G_{ii} - m| \le |(s-m) G_{ii}| + |Y_i G_{ii}|$ (see (4.29) from \cite{HKM19}), lead to
\begin{align}\label{eq:k03}
    \max_i \Prob(|G_{ii}(z_0) - m(z_0)| > 2 \zeta_0 t) \le 2 t^{-2r}.
\end{align}
Hence, by a union bound, we get
\begin{align}\label{eq:k04}
    \Prob\left( \max_{i,j} |G_{ij}(z_0) - m(z_0)\delta_{ij}|> 2 \zeta_0 t\right) \le (n^2+2n) t^{-2r}< 3n^2 t^{-2r}.
\end{align}

For $k\ge 1$, we show that conditioning on $\Omega_{k-1}\cap \Xi_{k-1}$, the event $\Omega_{k}\cap \Xi_{k}$ holds with high probability. Define a threshold index $\widetilde{\mathcal K}:=\min\{ k \in \mathbb N: k\le \mathcal K, |m(z_k) -\tilde{m}(z_k)| \le 5 \zeta_k t\}.$ For $k< \widetilde{\mathcal K}$, $|m(z_k) -\tilde{m}(z_k)| > 5 \zeta_k t$, and we must be sure that $s(z_k)$ is close to $m(z_k)$ rather than $\tilde{m}(z_k)$. In contrast, for $k\ge \widetilde{\mathcal K}$, the distinction between $m(z_k)$ and $\tilde{m}(z_k)$ becomes insignificant, leading to a simpler argument. The majority of the proof follows steps analogous to those used for the base case $k=0$. We provide a sketch of the key arguments below. 

\emph{Case I. } For $1\le k < \widetilde{\mathcal K}$, since $\Gamma(z)$ is $n^2$-Lipschitz, we have $\phi(z_k)=1$ on $\Xi_{k-1}$. Following the derivation of \eqref{eq:minsm}, we obtain
\[\Prob\left( \phi(z_k) \min\{ |s(z_k)-m(z_k)|, |s(z_k)-\tilde{m}(z_k)| \}> \zeta_k t \right)\le t^{-2r}.  \]
Since $\zeta$ is a decreasing function of $\eta$ and $m,s$ are $n^2$-Lipschitz, on $\Omega_{k-1}$ we have 
\[|s(z_k)-m(z_k)| \le |s(z_{k-1})-m(z_{k-1})| + \frac{2n^{-3}}{\eta^2} \le \zeta_{k-1} + \frac{2}{n^3\eta} < 2 \zeta_k t.\]
This implies that $\min\{ |s(z_k)-m(z_k)|, |s(z_k)-\tilde{m}(z_k)| = |s(z_k)-m(z_k)|$.
Therefore,
\[\Prob(\Omega_{k-1}\cap \Xi_{k-1} \cap \Xi_k^2) \le t^{-2r}. \]
Similarly to \eqref{eq:k02} and \eqref{eq:k03}, the following estimates hold:
\begin{align}
    &\max_{i} \Prob(\Omega_{k-1} \cap \Xi_{k-1} \cap \{|G_{ii}(z_k) - m(z_k)| > 2 \zeta_k t \}) \le 2 t^{-2r},\\
    &\max_{i\neq j} \Prob( \Xi_{k-1} \cap \{|G_{ij}(z_k)| >\zeta_k t \} ) \le t^{-2r},\\
    &\max_{i,j\neq l} \Prob( \Xi_{k-1} \cap \{ |G_{ij}(z_k) - G_{ij}^{(l)}(z_k) | > \zeta_k t \}) \le t^{-2r}. 
\end{align}
A union bound, together with the estimate $|m|\le 1$, implies that
\[\Prob(\Omega_{k-1} \cap \Xi_{k-1} \cap \{ \Gamma(z_k) > 1+ 2\zeta_k t\}) \le (2n + n^2 + n^3) t^{-2r} \le 2n^3 t^{-2r}.\]
Noting that $\zeta_k t \le 1/20$, we obtain
\[\Prob(\Omega_{k-1} \cap \Xi_{k-1} \cap \Xi_k^c ) \le 2n^3 t^{-2r}\] and consequently,
\[ \Prob\left(\Omega_{k-1} \cap \Xi_{k-1} \cap (\Omega_k \cap \Xi_k)^c \right) \le (2n^3+1) t^{-2r}.\]
Since $\Prob((\Omega_0 \cap \Xi_0)^c) \le t^{-2r}$, a standard induction argument (see, for instance, (5.17) from \cite{BGK16}) shows that 
\[\Prob((\Omega_k \cap \Xi_k)^c) \le t^{-2r} + k(2n^3 +1) t^{-2r}.\]
Finally, similar to \eqref{eq:k04}, we arrive at
\[\Prob\left( \max_{i,j} |G_{ij}(z_k) - m(z_k)\delta_{ij}|> 2 \zeta_k t\right) \le t^{-2r} + k(2n^3 +1) t^{-2r} + (2n+n^2) t^{-2r} \le 3 kn^3 t^{-2r}. \]

\emph{Case II. } For $\widetilde{\mathcal K} \le k \le \mathcal K$, given that $|\tilde{m}(z_k)-m(z_k)|< 5 \zeta_k t$, we have the following estimates:
\begin{align}
    &\Prob(\Xi_{k-1} \cap \{|s(z_k)-m(z_k)| > 6\zeta_k t) \le t^{-2r},\\
    &\max_{i} \Prob(\Xi_{k-1}  \cap \{|G_{ii}(z_k) - m(z_k)| > 10 \zeta_k t \}) \le 2 t^{-2r},\\
    &\max_{i\neq j} \Prob( \Xi_{k-1} \cap \{|G_{ij}(z_k)| >\zeta_k t \} ) \le t^{-2r}.
\end{align}
Since $|m|\le 1$ and $20\zeta_k t \le 1$, applying a union bound yields
\[ \Prob(\Xi_{k-1} \cap \Xi_k^c) \le (2n + n^2) t^{-2r}.  \]
Using the bound $\Prob(\Xi_{\widetilde{K}-1}^c) \le 3(\widetilde{K}-1)n^3 t^{-2r}$ from \emph{Case I} and an induction argument, we obtain
\[ \Prob(\Xi_k^c) \le 3(\widetilde{K}-1)n^3 t^{-2r} + (k-\widetilde{K}) (2n + n^3) t^{-2r}. \]
Finally, following an argument similar to \eqref{eq:k04}, we conclude
\[\Prob\left( \max_{i,j} |G_{ij}(z_k) - m(z_k)\delta_{ij}|> 10 \zeta_k t\right) 
\le 3 kn^3 t^{-2r}. \]

\section{Proofs of Corollary~\ref{cor:main} and Corollary \ref{coro:general-HW}}\label{App:B}

\subsection{Proof of Corollary \ref{cor:main}}

To obtain the Hanson-Wright inequality for a general matrix $A$, we bound the concentration of the diagonal sum $S_{\mathrm{diag}}=\sum_{i=1}a_{ii} X_i^2$ in the quadratic form $\sum_{i,j}a_{ij}X_iX_j$ by applying Theorem~\ref{thm:linear} to $X_i^2-EX_i^2$. Note that 
\begin{align}\label{eq:Ymom}
\E |X_i^2 - \E X_i^2|^l \leq 2^l \E |X_i|^{2l} \leq 2^l p_i K^{2l}(2l)^{2l}.
\end{align}
Here in the first inequality, we use the fact that for a non-negative random variable $Z$, one has \[\E(Z-\E Z)^l \leq \E [2^{l-1}(Z^l + (\E Z)^l)] \leq 2^l \E Z^l.\]
Hence, $X_i^2-EX_i^2$'s are sparse $\frac{1}{2}$-subexponential random variables. In particular, the tail bound for $S_{\mathrm{diag}}$ is given by 
    \begin{align}
        \Prob\left( \left| S_{\mathrm{diag}} -\E S_{\mathrm{diag}}  \right| 
        \ge t\right) 
        \le  \, e^2 \exp\left( -c \min \left\{ \frac{t^2}{K^4 \sum_i a_{ii}^2 p_i}, \left(\frac{t}{K^2\max_i |a_{ii}|}  \right)^{\frac{1}{2}}\right\} \right).\label{eq:diag_bernstein}
    \end{align}
Combining the bounds for the diagonal and off-diagonal terms, we prove Corollary \ref{cor:main}.

\subsection{Proof of Corollary \ref{coro:general-HW}}

In this section, we briefly discuss the extension of our results to the concentration of quadratic forms of noncentered random variables. Let $Y_1,\cdots, Y_n$ be independent random variables, and consider a symmetric, diagonal-free matrix $A$ for simplicity.  Denote $X_i=Y_i - \E(Y_i)$. We rewrite 
\begin{align}
Y^T A Y = \sum_{i\neq j} a_{ij} Y_i Y_j = \sum_{i\neq j} a_{ij} (X_i+\E(Y_i)) (X_j + \E(Y_j)).
\end{align}
Through straightforward calculation, we obtain
$$Y^T A Y - \E(Y^T A Y) = X^T A X + 2\sum_{i}\left(\sum_{j} a_{ij} \E(Y_j) \right) X_i:=X^T A X + 2\sum_{i=1}^n b_i X_i. $$
Note that $\Var(Y^T A Y)= \Var(X^T A X) + \Var(2\sum_{i}b_i X_i)$ since $X^T A X$ and $\sum_{i=1}^n b_i X_i$ are uncorrelated. 

The concentration inequality for $Y^T A Y - \E(Y^T A Y)$ can be obtained by considering those of $X^T A X$ and $\sum_i b_i X_i$ separately. Assume the moments of $Y_i$ satisfy $\E|Y_i|^r \le p_i (K r)^r$ for $r\ge 1$. Then from the inequality $\|X_i\|_{L_r} = \|Y_i - \E(Y_i)\|_{L_r} \le 2 \|Y_i\|_{L_r}$, we deduce that the $X_i$'s are independent centered random variables satisfying $\E|X_i|^r \le p_i (2K r)^r$ for $r\ge 2$. Consequently, the conclusion of Theorem \ref{thm:sparse_alpha} holds for $X^T A X$ where $\alpha=1$. Moreover, $\sum_{i=1}^n b_i X_i$ is a sum of independent random variables. Applying Theorem \ref{thm:linear} with $\alpha=1$, we find that the tail bound for $\sum_{i=1}^n b_i X_i$ is a mixture of sub-gaussian and subexponential tails. For the subexponential tail, we can use the estimate that $$\max_{i} |b_i| = \max_i | \sum_{j} a_{ij} \E(Y_j)|\le K\max_i \sum_j |a_{ij}|p_j = K\gamma_{1,\infty}.$$

The conclusion of Corollary \ref{coro:general-HW} follows by combining the results of Theorem \ref{thm:sparse_alpha} and Theorem \ref{thm:linear} in the preceding discussion.

\end{document}